\documentclass[aos,preprint]{imsart}

\RequirePackage{amsthm,amsmath,amsfonts,amssymb, stmaryrd, bbm}
\RequirePackage[numbers]{natbib}
\RequirePackage{graphicx}

\startlocaldefs
\theoremstyle{plain}
\newtheorem{corollary}{Corollary}
\newtheorem{theorem}{Theorem}
\newtheorem{lemma}{Lemma}
\newtheorem{proposition}{Proposition}

\theoremstyle{definition}

\newtheorem{definition}{Definition}
\newtheorem{remark}{Remark}

\DeclareMathOperator{\Id}{Id}

\DeclareMathOperator{\Fix}{Fix}
\DeclareMathOperator{\Der}{Der}

\DeclareMathOperator{\diag}{diag}
\DeclareMathOperator{\Dom}{Dom}
\DeclareMathOperator{\Ker}{Core}

\DeclareMathOperator{\Neg}{Neg}

\DeclareMathOperator{\Unif}{Unif}

\newcommand{\1}{\mathbbm{1}}
\newcommand{\ds}{\displaystyle}
\newcommand{\dsum}{\ds\sum}
\newcommand{\eps}{\varepsilon}

\newcommand{\llb}{\llbracket}
\newcommand{\rrb}{\rrbracket}

\newcommand{\E}{\mathbb{E}}
\newcommand{\N}{\mathbb{N}}
\newcommand{\R}{\mathbb{R}}
\newcommand{\mc}{\mathcal}
\newcommand{\Proba}{\mathbb{P}}
\newcommand{\Sym}{\mathcal{S}}

\newcommand{\defeq}{\overset{\text{def}}{=}}

\newcommand{\goesto}[1]{\underset{#1}{\longrightarrow}}
\endlocaldefs

\begin{document}

\begin{frontmatter}
\title{Graph alignment in sparse inhomogeneous models via self-overlap}
\runtitle{Graph alignment in sparse inhomogeneous models via self-overlap}

\begin{aug}
\author[A]{\fnms{Louis}~\snm{Vassaux}\ead[label=e1]{louis.vassaux@inria.fr}}

\address[A]{INRIA Paris\printead[presep={,\ }]{e1}}
\end{aug}

\begin{abstract}
We develop a general framework for understanding when graph alignment is information-theoretically feasible in sparse inhomogeneous random graph models, by studying the set of vertices on which the underlying matching can be recovered. Our main theorem gives a general lower bound on this set by leveraging the balanced load function introduced in \cite{hajek1990}. The corresponding obstruction is captured by a new graph parameter, the self-overlap, which measures the extent to which a graph can imitate itself under a non-trivial relabelling. We then show that this criterion is sharp in a broad class of sparse inhomogeneous models, recovering known Erdős--Rényi phenomena and yielding sharp thresholds for Chung--Lu graphs and stochastic block models.

\end{abstract}

\begin{keyword}[class=MSC]
\kwd[Primary ]{62B10}
\kwd{05C80}
\kwd[; secondary ]{05C60}
\end{keyword}

\begin{keyword}
\kwd{graph alignment}
\kwd{inhomogeneous random graphs}
\kwd{balanced load}
\kwd{self-overlap}
\end{keyword}

\end{frontmatter}


\section{Introduction}

The \textbf{graph alignment problem} is a well-studied statistical problem which aims to answer the following question: given two correlated but anonymously labelled graphs, when can one recover a non-trivial fraction of the hidden vertex correspondence? In this work, we build a model-agnostic framework to determine when graph alignment is information-theoretically feasible.

\subsection{Alignable subsets}

We will study the graph alignment problem from the following perspective.
 Given two correlated graphs $(G, H)$ over $n$ vertices, what is the largest subset of vertices on which we are capable of determining the correct matching? 

This approach is inspired by the approach in \cite{du2025}, in which the above question is answered in the Erd\H{o}s-R\'enyi case. In particular, the author of \cite{du2025} proves that in this case, the maximal alignable subset is (up to some specific ambiguities) equal to the subset
\begin{equation} \label{benchmark}
    V_{> \rho} = \{ v \in [n], w_I(v) > \rho\}
\end{equation}
where:

\begin{itemize}
    \item $I = G \land_{\sigma^*} H = \sigma^*(G) \cap H$ is the \textbf{true intersection} graph, with regards to the planted matching $\sigma^*$;
    \item $w_I$ is the \textbf{balanced load function} associated with $I$, as defined in \cite{hajek1990} (we will recall its definition in Section \ref{mainresults});
    \item $\rho$ is some model-dependent parameter.
\end{itemize}

This result highlights the relevance of the balanced load function in the study of graph alignment; the map $w_I$ emerges as a benchmark to test whether a given vertex $v$ can be correctly mapped or not. We will push this result to a much larger class of graphs, evidencing the fact that the appearance of the balanced load function is not just a quirk of the Erd\H{o}s-R\'enyi case.

Our key tool here is the introduction of the \textbf{self-overlap}, a scalar $\rho \in [0, +\infty]$ associated with any sequence of large graphs measuring how self-symmetric said graphs are. In a certain sense, this quantity measures how well a graph can imitate itself under a non-trivial relabelling of its vertices. It will become the benchmark in (\ref{benchmark}): vertices whose balanced load in the true intersection graph lies above the self-overlap are alignable, whereas under certain conditions vertices below this threshold cannot be distinguished due to the self-symmetries of the underlying graph.

Thus, rather than viewing graph alignment feasibility as a purely model-specific question, we isolate two separate ingredients. The first is the amount of information carried by each vertex in the true intersection graph, as measured by the balanced load function. The second is the intrinsic self-similarity of the underlying graph sequence, as measured by the self-overlap. The main results of this paper show that the comparison between these two quantities governs partial and almost exact graph alignment in a broad class of sparse random graph models.

\subsection{Relation to previous work}

Most of the theoretical literature on graph alignment focuses on specific graph models, the two most prominent examples being the correlated Erd\H{o}s-Rényi model \cite{dingdu2023, dingduli2025, ganassalimassoulie2020, wuxuyu2022} and the correlated Gaussian model \cite{fanmaowuxu2023, ganassali2022gaussian, massoulievarmavassauxwaldspurger2026,  wuxuyu2022}. These models are fairly computationally tractable, which has allowed for many precise results to be proven; in particular, both settings now come with near-complete phase diagrams, both for the feasibility/infeasibility and computationally easy/hard phase transitions.

More recently, there has been a growing interest in generalising beyond these simple models. One research avenue has been to generalise the models in various directions, for instance by adding more graphs \cite{newah1, ameenhajek2026detection, evenganassali2025, vassauxmassoulie2026}, by adding asymmetry between the two graphs \cite{maiermassoulie2026}, or by adding node features \cite{yarandiganassali2026}. Another has been to prove results in other random graph models which more accurately represent real-world data, including Chung--Lu graphs \cite{yuxulin2021} and geometric random graphs \cite{evenganassalimaiermassoulie2024, wangwuxuyolou2022}.
In this paper, we take a slightly different route and try to build a framework which is as model-agnostic as possible, in the same spirit as \cite{raczsridhar2023}. 

Here, our focus will be on \textbf{partial} and \textbf{almost exact} graph alignment, rather than \textbf{exact} graph alignment. In other words, we regard two permutations as essentially equivalent if they agree on all but $o(n)$ vertices. Both formulations of the problem have been studied extensively, but they are naturally adapted to different regimes. Exact alignment is most relevant in relatively dense settings, where the average degree tends to infinity, while partial or almost exact guarantees are often the right notion in sparse graphs, where the average degree remains of order one.

Our main contribution is to shift the focus from global model parameters to the structure of the alignable set itself. In sparse inhomogeneous graphs, some vertices may be intrinsically much easier to match than others, and the relevant question is therefore not only whether alignment is possible, but where it is possible. The framework developed in this paper gives a general way to identify these vertices and to quantify the obstructions which prevent the remaining ones from being matched, in a form that applies across a broad class of sparse random graph models.

We will also provide evidence that our results are sharp in a fairly broad range of models, including sparse Chung--Lu graphs and sparse stochastic block models. This suggests that the balanced-load/self-overlap criterion is not merely a convenient sufficient condition, but captures a genuine obstruction to graph alignment across a variety of sparse random graph models.

\subsection*{Organisation}
The rest of the paper is organised as follows. In Section \ref{model}, we introduce the graph alignment model and the class of correlated inhomogeneous graph systems considered in the paper. In Section \ref{mainresults}, we state our main results, after recalling the balanced load function and defining the self-overlap. Section \ref{corollaries} derives consequences for partial and almost exact alignment in several standard sparse random graph models. Section \ref{balancedload} collects the basic properties of the balanced load function, while Section \ref{sov} discusses the self-overlap and its interpretation. Sections \ref{proofersov} and \ref{proofimpossible} contain the proofs of the main sharpness and impossibility results.
\subsection*{Notation}
In this paper, $n$ is a large integer. We set $V = V_n = [n]$ and $E = \{ \{ u, v \}: u \neq v \in V \}$. If $e = \{u, v \} \in E$, and $\sigma \in \Sym_n$, we denote $\sigma(e) = \{ \sigma(u), \sigma(v) \} \in E$. 

If $K$ is a graph over $V$ then $|K|$ is the number of edges of $K$; we will sometimes identify $K$ with the set of its edges. $\vec E(K)$ is the set of directed edges in $K$. If $V_1, V_2 \subseteq V$, then $K[V_1 \leftrightarrow V_2]$ is the graph over $V$ whose edge set is \[\{e \in K \text{ with one endpoint in } V_1 \text{ and the other in } V_2 \}.\] In particular, $K[V_1] \defeq K[V_1 \leftrightarrow V_1]$ is the induced subgraph on $V_1$. 

If $K, K'$ are graphs over $V$, we will say that $K \simeq_{n \to +\infty} K'$ if the symmetric difference of the two graphs verifies $|K \Delta K'| = o(n)$. We will say that $K \lesssim_{n \to +\infty} K'$ if there exists $K'' \subseteq K'$ such that $K \simeq_{n \to +\infty} K''$. For any $\sigma \in \Sym_n$, $K \land_\sigma K'$ is the graph $\sigma(K) \cap K'$; $K \lor_\sigma K'$ is the graph $\sigma(K)  \cup K'$. 

If $p = (p_n) \geq 0$, the law $\mc{G}(n, p)$ is the standard Erd\H{o}s-R\'enyi law over graphs on $V$.

If $\sigma, \sigma' \in \Sym_n$, we define their overlap $ov(\sigma, \sigma') = \frac{1}{n} \# \{ v \in [n], \sigma(v) = \sigma'(v) \}$. The distance $d = 1-ov$ makes $\Sym_n$ into a metric space; any (closed) balls $B(\pi, r)$ for $\pi \in \Sym_n, r \geq 0$ are defined with regards to this metric. We also denote $\Fix \sigma = \{ v \in V: \sigma(v) = v \}$ and $\Der \sigma = V \setminus \Fix \sigma$.

If $(X_n)_{n \geq 0}$ is a sequence of real-valued random variables, we define
\begin{equation}
    \operatorname*{plimsup}_{n \to +\infty} X_n = \sup \left\{ x \in \R : \limsup_{n \to +\infty}\Proba(X_n \geq x) > 0 \right\}
\end{equation}
as well as $\ds\operatorname*{pliminf}_{n \to +\infty} X_n = -\operatorname*{plimsup}_{n \to +\infty} (-X_n)$. 

If $(Y_n)$ is another sequence of real-valued random variables, we say that $X_n = o_\Proba(Y_n)$ (resp. $X_n = O_\Proba(Y_n)$) if there exists a random sequence $(c_n)$ such that $X_n = c_n Y_n$ and $\operatorname*{plimsup}_{n \to +\infty} |c_n| = 0$ (resp. $(c_n)$ is tight). We say that $X_n = \Theta_\Proba(Y_n)$ if $X_n = O_\Proba(Y_n)$ and $Y_n = O_\Proba(X_n)$.
\section{The model} \label{model}
The graph alignment problem, in full generality, can be stated formally as follows.

\begin{definition} \label{graphalignment}
    Let $(K_1, K_2)$ be a pair of random graphs, following a joint law $\mu$. The graph alignment problem is the following: a permutation $\sigma^* \in \Sym_n$ is chosen uniformly at random, independently from $K_1, K_2$; from the observation of the graphs $(G, H) = ((\sigma^*)^{-1}(K_1), K_2)$, we wish to recover the matching $\sigma^*$.

    In this setting, we refer to $(G, H)$ as a \textbf{correlated graph system}.
\end{definition}

The correlated graph systems which we study here will typically consist of inhomogeneous random graphs, which are defined as follows.

\begin{definition}
    Let $(p_e)_{e \in E}$ be a random vector of elements of $[0,1]$. An \textbf{inhomogeneous random graph} (sampled from $(p_e)$) is a graph $K$ such that, conditional on $(p_e)_{e\in E}$, the variables $(\1_{e\in K})_{e\in E}$ are independent and satisfy
\begin{equation}
    \1_{e\in K} \sim \operatorname{Ber}(p_e).
\end{equation}

If, for $\mathbf{e} \in E$ chosen uniformly at random, the sequence of random variables $(n p_{\mathbf{e}})_{n \geq 1}$ is uniformly integrable, we say that $K$ is \textbf{weakly inhomogeneous}.
\end{definition}

This is a fairly standard model, encompassing a large class of random graphs which keep some of the nice properties of Erd\H{o}s-R\'enyi graphs but which approximate real-life networks much more closely. Some standard examples include the following. 

\begin{itemize}
    \item The Chung-Lu model, where we fix a probability distribution $\nu$ over $\R_+$, sample iid random variables $(d_v)_{v \in [n]} \sim \nu$, and set
    \begin{equation}
        p_e = \min\left(1, \frac{d_u d_v}{\sum_{w \in [n]} d_w}\right) \text{ for any } e= \{ u, v \} \in E.
    \end{equation}
    This is a standard model used in the study of so-called scale-free graphs, where the degree $\deg v$ of a uniformly random vertex $v \in [n]$ follows a power-law distribution: for large $D$, $\Proba(\deg v \geq D) \simeq D^{-\gamma}$ for some fixed $\gamma > 0$. This is a significant difference from the Erd\H{o}s-R\'enyi model where the maximal degree is of logarithmic order.
    
    Here, the graph is weakly inhomogeneous iff $\E[\nu] < +\infty$. (For a proof, see the proof of Corollary \ref{chunglu} in Appendix \ref{corollaryproofs}.)
    \\

    \item Noisy random geometric graphs, defined as follows. Fix a metric probability space $(X, d, \mu) = (X_n, d_n, \mu_n)$, a radius $r = r_n > 0$ and a sampling parameter $p = p_n > 0$; we sample $n$ iid points $(x_1, \ldots, x_n)$ from the law $\mu_n$, and set 
    \begin{equation}
        p_e = p_n \1_{d(x_u, x_v) \leq r} \text{ for any } e= \{ u, v \} \in E.
    \end{equation}

    This is often used to model high-dimensional data (a classical choice is the sphere $X_n = \mathbb{S}^{k_n}$ for some adequate sequence $(k_n)$).

    Here, the graph is weakly inhomogeneous if $p_n = O(\frac{1}{n})$. 
    \\

    \item The stochastic block model, where the vertices are partitioned into unknown communities $V_1, \ldots, V_m$ for some $m \geq 1$, and we fix a symmetric matrix $(q_{ij})_{1 \leq i,j \leq m}$; then, 

    \begin{equation}
        p_{\{u,  v \}} = q_{ij} \text{ for any } u \in V_i, v \neq u \in V_j.
    \end{equation}
    As the previous description suggests, this is often used to model networks with a latent community structure. 

    Here, the graph is weakly inhomogeneous iff $\max_{i,j} q_{ij} = O(\frac{1}{n})$. 
    \\
\end{itemize}

Finally, let us define our model for correlated inhomogeneous random graphs.

\begin{definition}
    Let $G^{\uparrow}$ be an inhomogeneous graph and $s \in [0, 1]$ (which may depend on $n$). Then, we define a \textbf{correlated inhomogeneous graph system} $(G, H)$ by subsampling $G^{\uparrow}$, as follows. Sample variables $(\mc{B}_e^{K_1})_{e \in E}, (\mc{B}_e^{K_2})_{e \in E}$ following a $Ber(s)$ law independently from each other and anything else, and define $K_1, K_2$ by 
\begin{equation}
    \1_{e \in K_1} = \mc{B}_e^{K_1} \1_{e \in G^{\uparrow}}, \qquad \1_{e \in K_2} = \mc{B}_e^{K_2} \1_{e \in G^{\uparrow}}.
\end{equation}

Then, as in Definition \ref{graphalignment}, we sample $\sigma^* \in \Sym_n$ uniformly at random and set $G = (\sigma^*)^{-1}(K_1), H = K_2$ to obtain our graph system.
\end{definition}

This definition is standard; see \cite{raczsridhar2023, newah1}.

Before moving on, let us prove a fact about weakly inhomogeneous graphs which will serve us later on.

\begin{proposition} \label{weakchar}
    Let $K$ be a weakly inhomogeneous random graph, with associated edge probabilities $(p_e)_{e \in E}$. Then, for any sequence $\lambda_n \to +\infty$ with $\lambda_n \leq n$,
    \begin{equation}
        \#\{e \in K: p_e \geq \frac{\lambda_n}{n} \} = o_\Proba(n).
    \end{equation}

    In particular, there exists $\overline{K} \sim \mc{G}(n, \frac{\lambda_n}{n})$ such that $K \lesssim_{n \to +\infty} \overline{K}$.
\end{proposition}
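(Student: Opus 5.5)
The first claim reduces to a first-moment computation. Conditionally on $(p_e)$, the number of edges of $K$ with $p_e \geq \lambda_n/n$ has expectation
\[
\sum_{e \in E} p_e \1_{p_e \geq \lambda_n/n}.
\]
Taking expectations and writing $\mathbf{e}$ for a uniform edge,
\[
\E\bigl[\#\{e \in K : p_e \geq \tfrac{\lambda_n}{n}\}\bigr]
= |E|\, \E\bigl[p_{\mathbf{e}} \1_{n p_{\mathbf{e}} \geq \lambda_n}\bigr]
= \frac{|E|}{n}\, \E\bigl[n p_{\mathbf{e}} \1_{n p_{\mathbf{e}} \geq \lambda_n}\bigr].
\]
Since $|E|/n \leq n/2$ and $\lambda_n \to +\infty$, uniform integrability of $(np_{\mathbf{e}})$ makes the last expectation $o(1)$. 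The count is therefore $o(n)$ in expectation, and Markov's inequality gives $o_\Proba(n)$.

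For the coupling, I would construct $\overline{K}$ edge by edge. Conditionally on $(p_e)$, let $(U_e)_{e \in E}$ be iid uniforms on $[0,1]$ and realise $K$ as $\1_{e\in K} = \1_{U_e \leq p_e}$. Set $\overline{K} = \{e : U_e \leq \lambda_n/n\}$; since $\lambda_n \leq n$, this is a valid $\mc{G}(n, \lambda_n/n)$ graph, independent of $(p_e)$ because the $U_e$ are. Every edge of $K$ with $p_e < \lambda_n/n$ satisfies $U_e \leq p_e < \lambda_n/n$, so it lies in $\overline{K}$. Take $K'' = K \cap \overline{K} \subseteq \overline{K}$. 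Then $K \Delta K'' = K \setminus \overline{K}$ consists only of edges with $p_e \geq \lambda_n/n$, and by the first part it has size $o_\Proba(n)$. This gives $K \lesssim \overline{K}$.

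The main obstacle is interpretive. The relation $\simeq_{n\to+\infty}$ was defined as $|K\Delta K'| = o(n)$ for random graphs, so I will read it as $o_\Proba(n)$. I also need to check that $\overline{K}$ may be coupled with $K$ in this way: it is not a function of $K$ alone, so the statement must be understood on an enlarged probability space. Beyond that, the only analytic input is the uniform-integrability step, which is the defining hypothesis, so the argument should be short.
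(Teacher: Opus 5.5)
Your proof is correct and matches the paper's: you use the same first-moment identity $\binom{n}{2}\E[p_{\mathbf{e}}\1_{np_{\mathbf{e}}\ge\lambda_n}]=o(n)$ from uniform integrability, and you make the Markov step to $o_\Proba(n)$ explicit, which the paper leaves implicit. The paper gives no argument for the ``in particular'' clause, and your monotone coupling $\1_{e\in K}=\1_{U_e\le p_e}$, $\overline{K}=\{e: U_e\le \lambda_n/n\}$ correctly supplies it, on an enlarged probability space as you note.
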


\begin{proof}
    Assume that $K$ is weakly inhomogeneous. Then, for any $\lambda_n \to +\infty$, 

    \begin{equation}
    \begin{split}
        \E[ \#\{e \in K: p_e \geq \frac{\lambda_n}{n} \}] &= \E\left[\sum_{e \in E: p_e \geq \frac{\lambda_n}{n}} \Proba(e \in K | (p_e)) \right] \\
        & = \binom{n}{2}\E_{\mathbf{e} \sim \Unif(E)} [p_{\mathbf{e}} \1_{p_e \geq \frac{\lambda_n}{n}}] \\
        & = o(n),
        \end{split}
    \end{equation}
    by uniform integrability.
\end{proof}

As we shall soon see, this fact will allow us to generalise the properties of Erd\H{o}s-R\'enyi graph alignment to weakly inhomogeneous graphs.

\section{Main results} \label{mainresults}

In order to state our main results, we will need to define a couple of concepts. Let us begin with the balanced load function associated with a graph, first introduced in \cite{hajek1990}.

\begin{definition}
     Let $K$ be a graph over $V$. An \textbf{allocation} on $K$ is a map $\theta: \vec{E}(K) \to [0, 1]$ such that, for any $(u \to v) \in \vec{E}(K)$, $\theta(u \to v) + \theta(v \to u) = 1$. Any such allocation induces a so-called \textbf{load} map
    \begin{equation}
    \begin{array}{c}
\partial \theta: 
\end{array} \hspace{5pt}
\begin{array}{rcl}
V & \longrightarrow & \R_+ \\
v & \longmapsto & \sum_{\{u, v\} \in E(K)} \theta(u \to v)
\end{array}.
\end{equation}

An allocation $\theta$ is said to be \textbf{balanced} if for any $e = \{u, v\} \in E(K)$, 
\begin{equation}
    \partial \theta(u) < \partial \theta(v) \Longrightarrow \theta(u \to v) = 0.
\end{equation}

Then, balanced allocations over $K$ exist, and they all share the same associated load function $w_K$. We call this map the \textbf{balanced load function} associated to $K$.
\end{definition}

For the moment, we will simply note that, informally, the function $w_K(v)$ represents the "local density" of $K$ around a given vertex $v \in V$: a vertex $v$ which is more strongly connected to the rest of the graph will tend to have a higher associated balanced load. (For instance, on a $d$-regular graph the balanced load is constant and equal to $\frac{d}{2}$.) We will provide further context on this function in Section \ref{balancedload}.

Our second definition is the notion of self-overlap of a random graph.

\begin{definition}
    Let $K$ be a graph over $V$. We define the self-overlap of $K$:

\begin{equation} 
    SOV(K) = \operatorname*{lim}_{\eps \to 0} \operatorname*{plimsup}_{n \to +\infty}  \max_{\sigma \in \Sym_n}\max_{\underset{|\mc{A}| \geq \eps n}{\mc{A} \subseteq \Der \sigma}}  \operatorname*{min}_{v \in \mc{A}} w_{(K \land_\sigma K)}(v).
\end{equation}
\end{definition}

This is a scalar $\rho \in [0, +\infty]$ associated with any random graph $K$ which we introduce. In a certain sense, it encapsulates how self-symmetric the graph $K$ is. We will provide an in-depth explanation of this concept in Section \ref{sov}; for now, we will simply highlight the following points.
\begin{itemize}
    \item If $K$ has an automorphism with no fixed points,
    \begin{equation}
        SOV(K) = \min_{\mc{N} \subseteq V, |\mc{N}| = o_\Proba(n)} \max_{v \notin \mc{N}} w_K(v),
    \end{equation}
    and in particular $SOV(K) = \frac{d}{2}$ if $K$ is also $d$-regular. This follows directly by unpacking the definition of $SOV$; we omit the details.

    \item Informally, if $K$ is sparse ($O(1)$ average degree), and is "very asymmetric", then we should expect to have $SOV(K) = 1$. (For more precise statements, see Corollary \ref{weaksov} and Proposition \ref{starsnstripes}.)
\end{itemize}

Finally, we will fix the following notations.

\begin{definition}
    Let $G, H$ be correlated random graphs. From here on out, we set
    \begin{equation}
        I = G \land_{\sigma^*} H \qquad \text{and} \qquad U = G \lor_{\sigma^*} H.
    \end{equation}
    These are the true intersection and union graphs associated with the system. 
    Furthermore, if $t \geq 0$, we set
\begin{equation}
     V_{\geq t} = V_{\geq t}(I) = \{ v \in V: w_I(v) \geq t \}; \qquad V_{\leq t} = V_{\leq t}(I) = \{ v \in V: w_I(v) \leq t \}
\end{equation}
the level sets for the balanced load function associated with $I$.
\end{definition}

\subsection{Feasibility of graph alignment}

The following theorem is the main motivation for our definition of $SOV$.

\begin{theorem} \label{possible}
    Let $(G, H)$ be correlated random graphs, and set $\rho = SOV(U)$. Then, for any $\eps > 0$, there exists an estimator $\hat{\sigma}$ such that 

    \begin{equation}
        ov(\hat{\sigma}, \sigma^*) \geq \frac{|V_{\geq (\rho + \eps)}|}{n}(1 - o_\Proba(1)).
    \end{equation}
\end{theorem}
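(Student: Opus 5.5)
The plan is to build $\hat\sigma$ as a \emph{maximum-load matching estimator}. Among all $\sigma \in \Sym_n$, I compare the intersection graphs $G \land_\sigma H$ through their balanced load functions, and output a $\sigma$ that maximises the number of vertices $v$ with $w_{G \land_\sigma H}(v) \geq \rho + \eps$. A natural candidate is
\[
\hat\sigma \in \arg\max_{\sigma} |V_{\geq \rho+\eps}(G\land_\sigma H)|.
\]
It may be cleaner to maximise a truncated total load such as $\sum_v \min(w_{G\land_\sigma H}(v), \rho+\eps)$. Whichever objective is used, the true matching $\sigma^*$ is feasible, so the optimiser $\hat\sigma$ satisfies $|V_{\geq\rho+\eps}(G\land_{\hat\sigma}H)| \geq |V_{\geq \rho+\eps}(I)|$.

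The core step is to show that an optimiser cannot disagree with $\sigma^*$ on a linear number of these high-load vertices. Consider $\tau = \hat\sigma^{-1}\circ\sigma^*$, or the appropriate composition, so that $\Der\tau$ is exactly the set of vertices where $\hat\sigma$ and $\sigma^*$ differ. Both $\sigma^*(G)$ and $\hat\sigma(G)$ are contained in copies of $U$ up to relabelling. More precisely, $G\land_{\hat\sigma}H \subseteq \hat\sigma(G)\cap H$, and $H \subseteq U$, $\sigma^*(G)\subseteq U$, so $\hat\sigma(G) = \tau'(\sigma^*(G)) \subseteq \tau'(U)$ for the relabelling $\tau' = \hat\sigma\circ(\sigma^*)^{-1}$. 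Hence
\[
G\land_{\hat\sigma}H \subseteq U\land_{\tau'}U.
\]
I then invoke monotonicity of the balanced load function under edge inclusion, to be justified in Section \ref{balancedload}: if $K\subseteq K'$ then $w_K\leq w_{K'}$ pointwise. This gives $w_{G\land_{\hat\sigma}H}(v)\leq w_{U\land_{\tau'}U}(v)$.

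Now suppose the set $\mc{A}$ of vertices with $w_{G\land_{\hat\sigma}H}(v)\geq\rho+\eps$ and $\hat\sigma(v)\neq\sigma^*(v)$ had size at least $\delta n$, with non-vanishing probability. Then $\mc{A}\subseteq\Der\tau'$, up to the indexing convention of which side the vertex lives on, and $\min_{\mc A} w_{U\land_{\tau'}U}\geq\rho+\eps$. This contradicts $SOV(U)=\rho$: by definition, for $\delta$ small enough, $\operatorname*{plimsup}$ of that max-min is below $\rho+\eps/2$. So with high probability at most $\delta n$ of $\hat\sigma$'s high-load vertices are mismatched, for every fixed $\delta>0$. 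Since $\hat\sigma$ has at least $|V_{\geq\rho+\eps}(I)|$ high-load vertices, this yields $n\,ov(\hat\sigma,\sigma^*)\geq |V_{\geq\rho+\eps}(I)| - o_\Proba(n)$. When $|V_{\geq\rho+\eps}(I)|=o(n)$ the bound is trivial, so the multiplicative form $(1-o_\Proba(1))$ needs a little care. I would handle that case by noting that the statement then reads $ov\geq o_\Proba(1)$-type and is vacuous, or by stating the bound additively first.

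The main obstacle I expect is the bookkeeping between the two indexings. $G$ lives on the ``pre-image'' labels while $H$ and $U$ live on the target labels, so I must make sure the vertex sets where the loads are evaluated coincide with $\Der$ of the relevant permutation of $U$. The other delicate point is the quantifier order in $SOV$: the limit in $\eps$ is taken outside the plimsup, so I must fix $\delta$ first and then use that the plimsup at level $\delta$ is at most $\rho+\eps/2$ for $\delta$ small. A minor point is measurability and tie-breaking of the argmax, which is harmless.
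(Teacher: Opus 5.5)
Your proposal is correct and is essentially the paper's proof. It uses the same argmax-count estimator, the inclusion $G\land_{\hat\sigma}H\subseteq U\land_{\hat\sigma(\sigma^*)^{-1}}U$, monotonicity of $w$, and a contradiction with the definition of $SOV(U)$ on the mismatched high-load vertices, which in $H$-labels are exactly those in $\Der(\hat\sigma(\sigma^*)^{-1})$. Two small caveats: the truncated-total-load objective would not guarantee $|V_{\geq\rho+\eps}(G\land_{\hat\sigma}H)|\geq|V_{\geq\rho+\eps}(I)|$, so stick with the count objective; and your remark that the argument really gives the additive bound $n\,ov(\hat\sigma,\sigma^*)\geq|V_{\geq(\rho+\eps)}|-o_\Proba(n)$ applies equally to the paper's own proof.
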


Note that this theorem does not assume anything about the graphs $(G, H)$. The only characteristics of the model which we must understand in order to apply this are:
\begin{itemize}
    \item the behaviour of the intersection $I = G \land_{\sigma^*} H$, which is usually already known if the graph model is standard;
    \item the quantity $SOV(U)$, which we will soon enough provide paths to compute.
\end{itemize}

This also fits with the intuitive idea that the more self-similarity the system exhibits, the more difficult the graph alignment problem becomes. Indeed, all else being equal, a larger value of $\rho=SOV(U)$ raises the balanced-load threshold for recoverability and therefore reduces the set $V_{\geq(\rho+\eps)}$ guaranteed to be alignable by the theorem.

Let us prove this theorem immediately.

\begin{proof}
    If $\sigma \in \mc{S}_n$, define $I(\sigma) = G \land_\sigma H$ (so that $I(\sigma^*) = I$). 
        Let $\eta > 0$, and let $\hat{\sigma} \in \mc{S}_n$ be such that $|\widehat{V_{dense}}| \defeq \# \{ v \in [n]: w_{I(\hat{\sigma})}(v) \geq \rho + \eps \}$ is maximal among all such $\sigma$. (In particular, $|\widehat{V_{dense}}| \geq |V_{\geq (\rho + \eps)}|$.) We will prove that $\hat{\sigma}$ fits, by showing that the set

\begin{equation}
    N = \{v \in  \widehat{V_{dense}}, \hat{\sigma} (\sigma^*)^{-1}(v) \neq v \}
\end{equation}
has size $|N| = o_\Proba(n)$.

        To this end, let us assume by contradiction that for some $\eta, \eta' > 0$, for large enough $n$, $|N| \geq \eta n $ with probability at least $\eta'$. Then, since $I(\hat{\sigma} )\subseteq U \land_{\hat{\sigma} (\sigma^*)^{-1}} U$, by monotonicity of $w$ (Proposition \ref{balancedload3}), 

        \begin{equation}
            \forall v \in N, w_{U \land_{\hat{\sigma} (\sigma^*)^{-1}} U}(v) \geq w_{I(\hat{\sigma} )}(v) \geq \rho + \eps.
        \end{equation}
        Since $N \subseteq \Der (\hat{\sigma} (\sigma^*)^{-1})$, this implies that
        
        \begin{equation}
            SOV(U) \geq \operatorname*{plimsup}_{n \to +\infty} \left(\1_{|N| \geq \eta n} \ds\min_{v \in N} w_{U \land_{\hat{\sigma} (\sigma^*)^{-1}} U}(v) \right) \geq \rho + \eps
        \end{equation}
        
        which we know to be false; contradiction.
\end{proof}

\begin{remark} \text{ }

\begin{itemize}
    \item  The estimator above is very similar in essence to the so-called $k$\textbf{-core estimator}, which has been extensively studied as a way to match graphs (see \cite{raczsridhar2023}). Indeed, the two definitions mirror each other, and Proposition \ref{balancedload2} provides explicit links between the $k$-core of a graph and the subsets $V_{\geq t}$ of sufficient load. In Section \ref{corollaries}, we will provide some corollaries of Theorem \ref{possible} formalising this remark.
    \item   The proof shows us that we can also recover a subset $\widehat{V_{dense}}$ such that $|\widehat{V_{dense}}| \geq |V_{\geq(\rho + \eps)}|$ and $\hat{\sigma}$ only makes $o_\Proba(n)$ mistakes over $\widehat{V_{dense}}$. However, counterintuitively, we have no guarantee a priori that $V_{\geq(\rho + \eps)} \subseteq \widehat{V_{dense}}$, even up to $o_\Proba(n)$ vertices. We discuss this further in Appendix \ref{discussion}.
\end{itemize}
\end{remark}

\subsection{Sharpness and infeasibility of graph alignment}

Let us continue by characterising the self-overlap in the case of Erd\H{o}s-R\'enyi graphs.
\begin{theorem} \label{ersov}
    Let $\lambda > 0$ and $\frac{1}{2} < \alpha \leq 1$ be fixed, and let $K \sim \mc{G}(n, \frac{\lambda}{n^{\alpha}})$ be an Erd\H{o}s-R\'enyi graph. Then,

    \begin{equation}
        SOV(K) = \frac{1}{2 \alpha -1}.
    \end{equation}
\end{theorem}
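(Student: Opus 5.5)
We prove the two inequalities $SOV(K)\le \frac{1}{2\alpha-1}$ and $SOV(K)\ge\frac{1}{2\alpha-1}$ separately. Both rest on translating statements about the balanced load into statements about subgraph densities, via the properties of $w$ collected in Section \ref{balancedload}. The fact we need, in the spirit of Proposition \ref{balancedload2}, is the following: if a set $\mc{A}$ with $|\mc{A}|\ge\eps n$ satisfies $w_{K\land_\sigma K}\ge t$ on $\mc{A}$, then $V_{\ge t}(K\land_\sigma K)$ spans a subgraph $F\subseteq K\land_\sigma K$ on a vertex set $S\supseteq \mc{A}$ with $|F|\ge t|S|-o(n)$. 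Conversely, a subgraph of density $\rho$ on linearly many vertices contains a linear-size set on which the load is at least $\rho-\delta$.

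\textbf{Heuristic.} A first-moment count identifies the threshold. Take a vertex set $S$ with $|S|=k=\beta n$, a permutation $\sigma$ with $\sigma(S)=S$ and no fixed points on $S$, and a set $F$ of $m=\rho k$ edges inside $S$ such that both $e$ and $\sigma(e)$ lie in $K$ for every $e\in F$. Assuming the $2m$ edges $F\cup\sigma(F)$ are mostly distinct, the expected number of such configurations is about
\begin{equation}
\binom{n}{k}\,k!\,\binom{\binom{k}{2}}{m}\Big(\frac{\lambda}{n^{\alpha}}\Big)^{2m}=\exp\Big(k\log n\,\big(1+\rho(1-2\alpha)\big)+O(k)\Big).
\end{equation}
The exponent is positive exactly when $\rho<\frac{1}{2\alpha-1}$.

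\textbf{Upper bound.} We make this a rigorous union bound. Fix $\eps>0$ and $\rho>\frac{1}{2\alpha-1}$. By the fact above, it suffices to show that with high probability there is no $\sigma$ and no $S$ with $|S|\ge\eps n$ carrying $\rho|S|$ edges of $K\land_\sigma K$ with endpoints in $\Der\sigma$. The union bound has to handle general $\sigma$, for which $\sigma(S)\neq S$. Write $T=S\cup\sigma(S)$. Every extra vertex of $T\setminus S$ costs a factor $n$ but saves nothing, so the case $\sigma(S)=S$ dominates. The count should be organised as a sum over the union graph $F\cup\sigma(F)$. We must also handle coincidences $\sigma(e)\in F$, for instance $2$-cycles of $\sigma$ on an edge. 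Each such coincidence halves the probability cost of an edge but also forces constraints on $\sigma$. The obstacle here is showing that these coincidences cannot form more than $o(k)$ edges. This holds because a coincidence $\sigma(e)=e'$ inside $S$ fixes $\sigma$ on both endpoints of $e$, which is an entropy loss of order $\log n$ per edge. We also use the freedom to remove $o(n)$ vertices from $\mc{A}$. Finally we take $\eps\to0$, which only affects lower-order terms.

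\textbf{Lower bound.} For $\rho<\frac{1}{2\alpha-1}$ we must exhibit, with non-vanishing probability, a pair $(\sigma,\mc{A})$ with $|\mc{A}|\ge \eps n$ for some small $\eps=\eps(\rho)$, and $w_{K\land_\sigma K}\ge\rho$ on $\mc{A}\subseteq\Der\sigma$. The plan is a second-moment argument on the number $Z$ of triples $(S,\sigma|_S,F)$ counted above, with $F$ required to be a fixed nice structure, for example an approximately $2\rho$-regular graph on $S$ so that the load on $S$ is near $\rho$ by the converse part of the fact above. We then show $\E[Z^2]=(1+o(1))\E[Z]^2$, or at least that $\E[Z^2]\le e^{o(n\log n)}\E[Z]^2$, and upgrade the latter via concentration of $\log Z$ (an edge-exposure martingale). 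This second-moment computation is where I expect the main difficulty to lie. Pairs of configurations can overlap in their vertex sets, in their permutations and in their edge sets, and the contributions of these overlaps must be controlled. An alternative I would try first is a greedy or planted construction. Choose $S$ and a uniformly random derangement $\tau$ of $S$. Then find inside the graph $K[S]\cap\tau(K[S])$, which is roughly $\mc{G}(\beta n, \lambda^2 n^{-2\alpha})$-like but with correlated edges, a dense piece by optimising $\tau$ locally. This alternative is less clean, so the second-moment route remains the main plan.
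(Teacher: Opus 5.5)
There are genuine gaps in both directions.

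\textbf{Upper bound.} First, your reduction keeps only edges inside a set $S\subseteq\Der\sigma$ (your count takes $\sigma(S)=S$ deranged and $F$ inside $S$). Discarding the fixed points of $\sigma$ is not legitimate. Since $K\land_\sigma K\supseteq K[\Fix\sigma]$, and for $\alpha<1$ and linear $|\Fix\sigma|$ that graph has degrees of order $n^{1-\alpha}$, the set $V_{\geq t}(K\land_\sigma K)$ contains essentially all of $\Fix\sigma$. So the $S$ produced by your ``fact'' is not inside $\Der\sigma$, and $|F|\geq t|S|$ holds trivially. Worse, the load of a moved vertex can come entirely from edges into $\Fix\sigma$. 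Take a transposition $(u\,u')$. For $v\in\Fix\sigma$, the edge $\{u,v\}$ lies in $K\land_\sigma K$ iff $v$ is a common $K$-neighbour of $u$ and $u'$, and $w_{K\land_\sigma K}(u)$ is then at least the number of such $v$, since their loads are huge. A partner $u'$ with $m$ common neighbours exists as soon as $n^{1-(2\alpha-1)m}\to\infty$, with no structure inside $\Der\sigma$ at all. The correct localisation, which is the paper's, is $t|X|\leq|(K\land_\sigma K)[X\leftrightarrow(X\sqcup\Fix\sigma)]|$ for $X=\Der\sigma\cap V_{\geq t}$, by Proposition \ref{balancedload1}. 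Edges into $\Fix\sigma$ must therefore be in the count.

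Second, coincidences cannot be shown to carry only $o(|S|)$ edges. Suppose $\{a_1,b_1\},\{a_2,b_2\}\in K$ and $\sigma$ swaps $a_1\leftrightarrow a_2$ and $b_1\leftrightarrow b_2$; then both edges lie in $K\land_\sigma K$ as coincidences, and doing this along a matching of $K$ gives linearly many. More generally, count fixed-point-free involutions on $S$ with $\rho|S|$ edges in $\sigma$-invariant pairs. There are about $\exp\bigl(\tfrac{|S|}{2}\log n\,(1-\rho(2\alpha-1))\bigr)$ of them. This has the same threshold as your generic count, and dominates it above the threshold. The error is your ``$\log n$ entropy loss per coincidence edge''. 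The loss is per pinned vertex, so it saturates at $|S|\log n$ once $F$ has average degree above $2$. That is the relevant regime, since $1/(2\alpha-1)>1$. Meanwhile the gain of $n^{\alpha}$ per coincidence edge keeps accumulating.

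What is needed is the paper's orbit bookkeeping. Split $E^{(X,\sigma)}$ into $\sigma$-orbits: $k$-cycles with $k\leq L=\lfloor 1/(1-\alpha)\rfloor$, long cycles, chains and special orbits. These have tail rates $\alpha-1/k$, $2\alpha-1$, $2\alpha-1$ and $\alpha$ respectively (Lemma \ref{tailboundsfixedsigma}). Weigh these rates against the entropy $(1-1/k)|X_k|\log n$ of $\sigma$ on its $k$-cycles. Then combine them by summation by parts with weights $1/(\alpha-1/i)$ (Lemma \ref{whatastatement}). The $k=2$ term is exactly tight, so nothing can be discarded.

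\textbf{Lower bound.} Here you have a plan rather than a proof. The second-moment computation over overlapping triples $(S,\sigma|_S,F)$ is exactly what you flag as unresolved, and the planted alternative is not specified. The paper avoids constructing $\sigma$ altogether by reducing to an inference problem. Assume $SOV(K)<\kappa<1/(2\alpha-1)$. Build the correlated Erd\H{o}s--R\'enyi system of Theorem \ref{duthm} with $\beta=2\alpha-1$, choosing $\lambda_1,\lambda_2$ so that two things hold. First, $U$ is dominated by $K$, hence $SOV(U)\leq SOV(K)<\kappa$ by Lemma \ref{monot}. Second, $I\sim\mc{G}(n,c(\kappa)/n)$, whose maximal balanced load concentrates at $\kappa$ and is nearly attained by linearly many vertices \cite{anantharam2016}. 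Theorem \ref{possible} then gives an estimator correct on linearly many vertices. But Theorem \ref{duthm} forces every estimator to err on at least $|V_{\leq(1/\beta-\eps)}|(1-o_\Proba(1))$ vertices, which here is $n(1-o_\Proba(1))$. This reduction turns Du's impossibility result into the lower bound on the self-overlap, and it is the missing idea. Without it, or a completed second-moment argument, $SOV(K)\geq\frac{1}{2\alpha-1}$ remains unproved.
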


We prove this theorem in Section \ref{proofersov}.

This theorem serves a dual purpose. The first purpose is to show that, in the Erd\H{o}s-R\'enyi case, Theorem \ref{possible} is sharp: it recovers the same feasibility result as in \cite[Theorem 1.1]{du2025}, which is known to give the largest alignable subset (barring some technical details which we discuss in Remark \ref{1weightcase}). 

However, this identity can also be used outside of the Erd\H{o}s-R\'enyi model, owing to some convenient monotonicity properties of the self-overlap. Indeed, it implies the following corollary.

\begin{corollary} \label{weaksov}
    Let $K$ be a weakly inhomogeneous graph. Then, $SOV(K) \leq 1$. In particular, if $K$ has a giant component, then $SOV(K) = 1$.
\end{corollary}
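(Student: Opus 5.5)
The plan is to prove the upper bound $SOV(K) \leq 1$ by comparing $K$ with a slightly denser Erd\H{o}s--R\'enyi graph, and the lower bound (when there is a giant component) by exhibiting an explicit relabelling with large derangement set and load at least $1$ on it.

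\textbf{Upper bound.} First I will establish two monotonicity facts for $SOV$. The first is \emph{monotonicity under inclusion}: if $K \subseteq K'$, then $K \land_\sigma K \subseteq K' \land_\sigma K'$ for every $\sigma$, so by monotonicity of the balanced load (Proposition \ref{balancedload3}) the inner quantity in the definition of $SOV$ can only increase, giving $SOV(K) \leq SOV(K')$. The second is \emph{robustness to $o_\Proba(n)$ edge perturbations}: if $K \simeq K''$ with $|K \Delta K''| = o_\Proba(n)$, then $SOV(K) = SOV(K'')$. For this I intend to use a stability property of $w$ from Section \ref{balancedload}: removing $m$ edges changes $w$ by more than $\delta$ on at most $O(m/\delta)$ vertices. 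I expect this to follow from the fact that total load equals the number of edges, together with a max-flow/density characterisation. Since $\sigma(K)\cap K$ differs from $\sigma(K'')\cap K''$ by at most $2|K\Delta K''| = o(n)$ edges, any witness set $\mc{A}$ of size $\eps n$ loses only $o(n)$ vertices, and the $\eps \to 0$ limit absorbs the $\delta$ loss. With these two facts, Proposition \ref{weakchar} gives, for any $\lambda_n \to \infty$ slowly, some $\overline K \sim \mc G(n,\lambda_n/n)$ with $K \lesssim \overline K$. Hence $SOV(K) \leq SOV(\overline K)$.

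\textbf{Main obstacle.} Theorem \ref{ersov} is stated only for fixed $\lambda$ and $\alpha > 1/2$, not for $\lambda_n \to \infty$. I would handle this by choosing $\lambda_n = n^{\beta}$ for an arbitrary small $\beta>0$, which is allowed since Proposition \ref{weakchar} works for any $\lambda_n \to \infty$. Then $\overline K \sim \mc G(n, n^{-(1-\beta)})$ with $\alpha = 1-\beta > 1/2$, and Theorem \ref{ersov} gives
\[
SOV(\overline K) = \frac{1}{1-2\beta}.
\]
Letting $\beta \to 0$ yields $SOV(K) \leq 1$. The most delicate part is therefore the robustness lemma, specifically controlling how $w$ moves under deletion of $o(n)$ edges uniformly over all $\sigma$. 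However, the bound only needs to hold on the fixed graph pair, so no union bound over $\sigma$ is required.

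\textbf{Lower bound with a giant component.} Let $C$ be the giant component, with $|C| \geq cn$. It contains a spanning tree on $|C|$ vertices. I want $\sigma$ with $\mc{A} \subseteq \Der\sigma$, $|\mc A| \geq \eps n$, and $w_{K\land_\sigma K} \geq 1$ on $\mc A$ up to $o(1)$. My choice is to take a long path, or a collection of disjoint long paths, inside $K$ covering a linear number of vertices, and let $\sigma$ reverse each path. Reversal maps the path onto itself, so $K\land_\sigma K$ contains each path. A path on $L$ vertices has $L-1$ edges, hence load $1-1/L$ on its interior by density. Path reversal fixes the middle vertex but deranges all others.

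If linear-size path covers are not available, I would instead reflect a whole tree: pair up vertices of $C$ via an involution preserving a linear set of tree edges. Failing that, I would use the load $\geq 1-o(1)$ of any tree component of size $\to\infty$, together with a cyclic shift along a long path. Since $SOV$ takes $\lim_{\eps\to0}$ of a plimsup over $\sigma$, reaching loads $1-o(1)$ suffices to give $SOV(K) \geq 1$, and combined with the upper bound this gives $SOV(K) = 1$.
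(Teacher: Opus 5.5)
Your upper bound is correct and follows the paper's route: Proposition \ref{weakchar}, then stability and monotonicity of $SOV$ (Lemma \ref{monot}), then Theorem \ref{ersov}. Choosing $\lambda_n = n^{\beta}$, so that $\overline K \sim \mc{G}(n, n^{-(1-\beta)})$ falls under Theorem \ref{ersov} with $\alpha = 1-\beta$, and then letting $\beta \to 0$, fills in a step the paper leaves implicit. The stability you call delicate is just the second point of Proposition \ref{balancedload3}. For $K \subseteq K'$ one has $w_K \le w_{K'}$ and $\sum_v (w_{K'}(v) - w_K(v)) = |K'| - |K|$, so Markov's inequality suffices.

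\textbf{The gap is in the lower bound.} Everything rests on the claim that, for each fixed $L$, the giant component contains vertex-disjoint paths on at least $L$ vertices covering $c_L n$ vertices. You give no argument for this. Your fallbacks (``an involution preserving a linear set of tree edges'', ``a cyclic shift along a long path'') either restate the goal or again need long paths.

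For a graph that merely has a giant component, the claim is false. Let $K$ consist of $\sqrt n$ stars with $\sqrt n$ leaves each, with the centres joined by a path. Every path on at least $3$ vertices has its interior among the centres. Hence any family of disjoint such paths covers only $O(\sqrt n)$ vertices, although $SOV(K) \ge 1$.

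\textbf{The missing idea is that you never need paths.} A tree on $k$ vertices has constant balanced load $(k-1)/k$. So it is enough that $\sigma$ maps a tree in $K$ onto itself or onto an isomorphic copy in $K$. This is how Proposition \ref{starsnstripes} proceeds: pass to a spanning tree, then split according to whether vertices of degree at least $M$ carry at least $n/M$ of the total degree.
\begin{itemize}
\item If they do, root the tree and let $\sigma$ permute the children of each even-depth hub among themselves. Every star around a fixed hub is preserved, giving load at least $1 - 1/M$ on $\Theta(n/M)$ deranged children.
\item If not, the low-degree forest can be cut into subtrees with between $M/2$ and $M$ vertices. These fall into finitely many isomorphism classes, so $\sigma$ can permute isomorphic pieces without fixed points (up to $O(1)$ pieces), giving load at least $1 - 2/M$.
\end{itemize}

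Under weak inhomogeneity your path claim can probably be rescued, but this needs an argument your proposal does not supply. One route: vertices of degree at least $M$ then carry only $\eta(M) n$ edges, with $\eta(M) \to 0$. In a tree, the maximum packing of $L$-vertex paths equals the minimum hitting set. So a packing of size $o(n)$ would leave, after deleting the hitting set, $o(n)$ components of diameter less than $L$ spanning $\Theta(n)$ vertices, which a depth-by-depth count excludes.
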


The first statement follows from Theorem \ref{ersov} and Proposition \ref{weakchar}, by noting that $SOV(K)$ is a monotonous function of $K$ (see Lemma \ref{monot} from Section \ref{sov2}). The second then follows by applying Proposition \ref{starsnstripes}.

As a result, in the case of weakly inhomogeneous graphs, we have a good understanding of the threshold provided by Theorem \ref{possible}. Our final main result will be to show that, in a certain sense, this threshold is also sharp.

\begin{theorem} \label{impossible}
    Let $(G^\uparrow, G, H)$ be an inhomogeneous correlated graph system, and assume that $G^\uparrow$ is weakly inhomogeneous. Define
    
    \begin{equation}
    x_{sparse} = \lim_{\eps \to 0} \operatorname*{pliminf}_{n \to +\infty} \frac{|V_{\leq (1-\eps)}|}{n}.
\end{equation}
    Furthermore, assume that for some $r \geq 0$,
    \begin{equation} \label{noisycond}
        \min_{e \in E} p_e \geq \frac{1}{n (\log n)^r} \qquad \text{whp.}
    \end{equation}
    Then, if $\hat{\pi}$ is an estimator of $\pi^*$, there exist at least $x_{sparse}(n - o_\Proba(n))$ vertices $v \in [n]$ such that $\hat{\pi}(v) \neq \pi^*(v)$.
\end{theorem}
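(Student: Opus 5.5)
We would prove Theorem \ref{impossible} through the posterior distribution. Write $\pi^*$ for the planted matching, so $\pi^* = \sigma^*$. By Bayes, conditional on $(G,H)$ the posterior of $\pi^*$ is proportional to the likelihood $L(\pi) = \Proba(G,H \mid \pi^* = \pi)$. Any estimator $\hat\pi$ makes at least as many mistakes, in expectation, as the one that maximises the expected posterior overlap. So it suffices to show the following: for a posterior sample $\pi$, drawn independently of $\pi^*$ given $(G,H)$, the two permutations disagree on at least $(x_{sparse}-o(1))n$ vertices with high probability. If two posterior samples disagree on a set, then any fixed guess is wrong on about half of it in posterior expectation. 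This is only a constant factor, so we need a stronger statement: the posterior is \emph{spread out} on each vertex of $V_{\leq 1-\eps}$. The key objects are the vertices $v$ with $w_I(v)\le 1-\eps$. By the characterisation of the balanced load in Section \ref{balancedload} (Proposition \ref{balancedload2}), these are exactly the vertices outside every subgraph of density above $1-\eps$. Up to $o(n)$ vertices, they therefore lie in tree-like pieces of $I$: trees, and unicyclic components attached by pendant trees to the dense core.

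Step one reduces to a sparse, essentially Erd\H{o}s--R\'enyi, setting. Proposition \ref{weakchar} discards the $o_\Proba(n)$ edges with $p_e\ge \lambda_n/n$. The noise condition (\ref{noisycond}) ensures that all remaining likelihood ratios $p_e/p_{e'}$ are at most polylogarithmic. Hence swapping $O(1)$-size structures changes the likelihood by at most a factor $(\log n)^{O(1)}$.

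Step two is the core combinatorial construction. For each low-load vertex $v$, we take its pendant tree $T_v$ in $I$: the part of $I$ hanging off $v$, away from any subgraph of density exceeding $1-\eps$. We then find many other locations $u$ where the same rooted shape $T_v$ could be grafted. Concretely, we show that the number of vertices $u$ for which the transposition-type relabelling $\tau_{u,v}$ of these pendant trees changes $\log L$ by $O(\log\log n)$ grows with $n$. Two counts are needed. First, pendant trees of each bounded shape appear $\Theta(n)$ times; for the rare large ones we truncate at depth $K$ and lose only $o(n)$ vertices as $K\to\infty$. Second, the likelihood cost of reattaching a tree elsewhere is controlled by $p_{\min}\ge 1/(n(\log n)^r)$ against the $\Theta(n)$ candidate attachment points. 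The load bound $w_I(v)\le 1-\eps$ guarantees that $v$ is not part of a cycle-rich structure. That would pin its identity, because reattachment creates or destroys an unbalanced number of edges, and this margin is what makes the total exponent favourable.

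Step three builds the posterior spreading argument from these swaps. We assemble a family of permutations $\{\pi^*\circ\tau\}$, where $\tau$ ranges over products of disjoint local swaps, all of comparable likelihood. A counting argument then shows that, under the posterior, $\Proba(\pi(v)=\pi^*(v)\mid G,H)=o(1)$ for all but $o(n)$ vertices of $V_{\leq 1-\eps}$. Summing over $v$ and applying Markov's inequality gives at least $x_{sparse}(n-o_\Proba(n))$ errors for every $\hat\pi$. We then let $\eps\to 0$ along the definition of $x_{sparse}$.

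The main obstacle is step two. We must show that a low-balanced-load vertex genuinely admits many near-equal-likelihood relabellings, in a general inhomogeneous model where the $p_e$ vary. We would first try to handle the fully tree-like case, vertices in finite tree components of $I$, via exchangeability of isomorphic components. We would then treat pendant trees attached to the core by rerooting them at other core vertices, using (\ref{noisycond}) to bound the likelihood loss.
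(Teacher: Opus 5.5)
Your description of $V_{\leq(1-\eps)}$ is wrong, and this derails Step two. By Proposition \ref{balancedload2}(1), $I[V_{\leq(1-\eps)}]$ is a union of \emph{entire} connected components of $I$, each a tree on at most $1/\eps$ vertices. The reason is that if $w_I(u)\le 1-\eps$ and a neighbour $v$ had larger load, balancedness would force $\theta(v\to u)=1$, hence $w_I(u)\ge 1$. So low load spreads to the whole component, which then has at most $(1-\eps)$ times as many edges as vertices.

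Vertices of unicyclic components, or of pendant trees hanging off a component containing a cycle, have load at least $1$. The pendant-tree vertices are among the weight-$1$ vertices that Remark \ref{1weightcase} explicitly leaves open. So the ``main obstacle'' you identify concerns vertices the theorem says nothing about, and your proposed mechanism for it is false. Relocating a pendant tree destroys the intersection edge attaching it, so $|U(\pi)|$ grows and the log-posterior drops by about $\log n$: under (\ref{noisycond}) a typical union edge contributes $l_e=-\log n+O(\log\log n)$. The drop is not $O(\log\log n)$. Against $\Theta(n)$ candidate locations this is an $n\cdot\frac{1}{n}$ trade-off, which gives bounded relative weight rather than $o(1)$ marginals. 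That is consistent with a positive fraction of such vertices being alignable in the Erd\H{o}s--R\'enyi case with $s=1$. Your depth-$K$ truncation is likewise unnecessary, since component sizes are bounded automatically.

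With the set correctly identified, the only relabellings you need are permutations $\sigma$ of isomorphic tree components of $I(\pi)$. Such a $\sigma$ is an automorphism of $I(\pi)$, so $I(\sigma\pi)\supseteq I(\pi)$ and $|U(\sigma\pi)|\le |U(\pi)|$. Together with (\ref{noisycond}) and weak inhomogeneity, this gives a posterior ratio of at least $e^{-o(n\log n)}$ (Lemma \ref{ppost}). Boundedly many isomorphism types give $e^{\gamma n\log n}$ such $\sigma$ (Lemma \ref{negdef}).

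The second missing piece is the ``counting argument'' of Step three. Exhibiting many $\pi^*\circ\tau$ of comparable posterior only shows that no single permutation carries much posterior mass. This holds even if you transfer it to all posterior-typical $\pi$ via a set $\Neg$ with $\Proba(\pi^*\in\Neg)=o(1)$. It does not prevent the posterior from concentrating on a ball $B(\pi_0,x_{sparse}-\eta)$, which itself contains $e^{\Theta(n\log n)}$ permutations. What you need is a map $F$ on that ball minus $\Neg$ with $|F(\pi)|\ge e^{\gamma n\log n}$ and fibres $|F^{-1}(\pi')|\le e^{o(n\log n)}$, as in Lemma \ref{bayesian}. The fibre bound is the real work.

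The paper obtains it by permuting only the components meeting $X(\pi)=\{v\in V_{\leq(1-\eps)}(\pi):\pi^{-1}(v)=\pi_0^{-1}(v)\}$; call their union $\overline{X}(\pi)$. Two preimages of $\pi'$ sharing $X(\pi)$, $\overline{X}(\pi)$ and $I(\pi)[\overline{X}(\pi)]$ then differ by a permutation that is the identity off $\overline{X}(\pi)$ and on $X(\pi)$. Such a permutation maps each component to itself, which leaves only $e^{O(n)}$ choices. Your per-vertex-marginal version would also have to handle the fact that $V_{\leq(1-\eps)}$ is defined through the unknown $\pi^*$, which the ball formulation avoids.
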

We prove this theorem in Section \ref{proofimpossible}.

Note that, here, $V_{\leq(1-\eps)}$ is easy to describe: it is the set of vertices whose connected component in $I$ is a tree of size at most $\frac{1}{\eps}$. (We prove this in Proposition \ref{balancedload2}.) In particular, just like for  Erd\H{o}s-R\'enyi graphs, graph alignment becomes completely intractable in this case if $I$ is composed mostly of trees of size $O(1)$.

Condition (\ref{noisycond}) can be viewed as a sort of mild "noise-robustness" condition; it is similar in spirit to the noise-robustness criterion in \cite{brennan2021}, ensuring that our inference problem is in a certain sense "generic". Informally, it enforces the fact that the observer cannot use the information of which edges are \textbf{not} present in the intersection graph $I(\pi)$ in order to deduce whether we could have $\pi^* \simeq \pi$. It is unclear to the author whether or not this theorem still holds if we remove condition (\ref{noisycond}).

\begin{remark} \label{1weightcase}
    Even in the weakly inhomogeneous case, theorems \ref{possible} and \ref{impossible} do not necessarily fully characterise the maximal alignable subset of $V$, even up to $o_\Proba(n)$ vertices. Indeed, these theorems say nothing about the vertices $v \in V$ such that $w_I(v) = 1$. This is not a negligible subset: for instance, if $I$ has a giant component, then all vertices $v$ contained in the giant component but not contained in the $2$-core will verify $w_I(v) = 1$.

    This ambiguity already appeared in the Erd\H{o}s-R\'enyi case, and is briefly discussed in \cite{du2025}. In particular, they comment that, if the subsampling parameter $s$ is set to $1$, we can align a positive proportion of these weight-$1$ vertices, but not all of them. It seems plausible that this may be the case for generic weakly inhomogeneous graphs; we leave this as a direction for future work.
\end{remark}

\section{Some consequences of our results} \label{corollaries}

In this section, we give some corollaries which follow from our main results. Any proofs are deferred to Appendix \ref{corollaryproofs}. Throughout, we fix a correlated graph system $(G, H)$, and denote by $\hat{\pi}$ the estimator given by Theorem \ref{possible}.

We will begin by specialising Theorem $1$ in order to explain, in concrete graph-theoretic terms, when $\hat{\pi}$ recovers the latent matching $\pi^*$ on a macroscopic subset of vertices. 

\begin{corollary} \label{giant}
    Assume that $SOV(U) = 1$. Assume that $I$ has a connected component $\mc{G}$, with vertex set $V_\mc{G}$, such that there exist $\eps, \eta > 0$ such that, with high probability,

    \begin{equation} \label{techgiant}
        \forall N \subseteq V_\mc{G} \text{ with } |N| \leq \eta n, \qquad |I[V_\mc{G} \setminus N]| \geq (1 + \eps )|V_\mc{G}|.
    \end{equation}
    Then, $\ds \operatorname*{pliminf}_{n \to +\infty} ov(\hat{\pi}, \pi^*) > 0$.
\end{corollary}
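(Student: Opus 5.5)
By Theorem \ref{possible} with $\rho = SOV(U) = 1$, for every $\eps' > 0$ we have $ov(\hat{\pi}, \pi^*) \geq \frac{|V_{\geq (1+\eps')}|}{n}(1 - o_\Proba(1))$. Here $\hat{\pi}$ is the estimator of Theorem \ref{possible}, and the choice of $\eps'$ is ours. So it suffices to show that for some $\eps' > 0$ there is $c > 0$ with $|V_{\geq (1+\eps')}| \geq cn$ with high probability.

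The core of the argument is a deterministic statement about balanced loads. Let $K$ be a finite graph and $C$ a vertex set on which $K[C \setminus N]$ has at least $(1+\eps)|C|$ edges for every $N \subseteq C$ with $|N| \leq \eta n$. I claim that $\#\{v \in C : w_K(v) \geq 1 + \eps/2\}$ is large. Let $D = \{v \in C : w_K(v) \geq 1+\eps/2\}$ and suppose $|D| < \eta n$. Apply the hypothesis with $N = D$. Every edge of $K[C \setminus D]$ has both endpoints of load $< 1+\eps/2$. For a balanced allocation $\theta$, each such edge contributes its full unit of allocation $\theta(u\to v)+\theta(v\to u) = 1$ to $\sum_{v \in C\setminus D} \partial\theta(v)$. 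Hence
\begin{equation*}
(1+\eps)|C| \leq |K[C\setminus D]| \leq \sum_{v \in C\setminus D} w_K(v) < (1+\eps/2)\,|C \setminus D| \leq (1+\eps/2)|C|,
\end{equation*}
a contradiction. Therefore $|D| \geq \eta n$ whenever the hypothesis holds, and since $C = V_\mc{G}$ must be nonempty, the hypothesis can only hold if $|V_\mc{G}|$ is nonvacuous.

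Apply this with $K = I$ and $C = V_\mc{G}$. Then whp $|V_{\geq(1+\eps/2)}(I)| \geq \eta n$. Plugging $\eps' = \eps/2$ into Theorem \ref{possible} gives $ov(\hat\pi,\pi^*) \geq \eta(1-o_\Proba(1))$, so $\operatorname*{pliminf} ov(\hat\pi,\pi^*) \geq \eta > 0$.

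One subtlety: Theorem \ref{possible}'s estimator depends on $\eps'$, but the statement lets $\hat\pi$ be "the estimator given by Theorem \ref{possible}", so we take it with $\eps' = \eps/2$.

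The main obstacle is the middle inequality $|K[C\setminus D]| \leq \sum_{v\in C\setminus D} \partial\theta(v)$. It holds because allocation weights are nonnegative and each edge inside $C\setminus D$ distributes total weight $1$ among its two endpoints, both of which lie in $C\setminus D$. No property beyond the definitions is needed, so I expect this step to go through. The one thing to double-check is that $\partial\theta = w_K$ for any balanced $\theta$, which is part of the definition quoted from \cite{hajek1990}.
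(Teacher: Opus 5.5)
Your proof is correct and is essentially the paper's argument: the paper also sets $\gamma=\eps/2$, takes the high-load vertices as the set $N$ in (\ref{techgiant}), and bounds the edges among the remaining vertices of $V_\mc{G}$ by the sum of their loads (Proposition \ref{balancedload1}) to reach a contradiction, before invoking Theorem \ref{possible}. Your direct allocation-counting inequality is just the relevant case of Proposition \ref{balancedload1}, and working deterministically on the high-probability event even gives the constant $\eta$ in place of the paper's $\eta/2$.
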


In practice, we will be using Corollary \ref{giant} to determine the threshold for feasibility of partial alignment: if the intersection graph $I$ has a giant component, then (\ref{techgiant}) is usually verified.

Theorem \ref{possible} also specialises into the following statement.

\begin{corollary} \label{cores}
    Let $k \geq 3$, and assume that  $SOV(U) < \frac{k}{2}$. Then,

            \begin{equation}
                ov(\hat{\pi}, \pi^*) \geq \frac{|\Ker_k(I)|}{n}(1 - o_\Proba(1)),
            \end{equation}
        
        where $\Ker_k(I)$ is the set of vertices inside the $k$-core of $I$.
\end{corollary}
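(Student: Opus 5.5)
Corollary \ref{cores} is a direct specialisation of Theorem \ref{possible}. The plan is to show that every vertex of the $k$-core of $I$ has balanced load at least $\frac{k}{2}$, so that $\Ker_k(I) \subseteq V_{\geq (\rho+\eps)}$ for a suitable $\eps>0$, where $\rho = SOV(U)$.

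First, since $\rho < \frac{k}{2}$ and $\rho$ is a deterministic scalar (being defined through $\operatorname*{plimsup}$), we fix $\eps = \frac{1}{2}(\frac{k}{2} - \rho) > 0$. Then $\rho + \eps < \frac{k}{2}$. Applying Theorem \ref{possible} with this $\eps$ yields the estimator $\hat{\pi}$ with
\[
ov(\hat{\pi},\pi^*) \geq \frac{|V_{\geq(\rho+\eps)}|}{n}(1-o_\Proba(1)).
\]
The estimator of Theorem \ref{possible} depends on $\eps$; we take $\hat{\pi}$ to be the one built with this choice.

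Second, we show $w_I(v) \geq \frac{k}{2}$ for every $v \in \Ker_k(I)$. Let $C$ be the $k$-core of $I$, which has minimum degree at least $k$.
\begin{itemize}
\item By monotonicity of the balanced load (Proposition \ref{balancedload3}), $w_I \geq w_C$ on the vertices of $C$.
\item It therefore suffices to show $w_C(v) \geq \frac{k}{2}$ on $C$. This is exactly the link between cores and level sets recorded in Proposition \ref{balancedload2}, which we would invoke directly.
\item If a self-contained argument is needed, take a balanced allocation $\theta$ on $C$ and let $m = \min_C \partial\theta$, attained on a set $S$. For an edge $\{u,v\}$ with $u \in S$ and $v \notin S$, we have $\partial\theta(u) < \partial\theta(v)$, so balancedness gives $\theta(u \to v) = 0$ and $v$ receives the whole edge. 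Hence the total load on $S$ equals the number of edges of $C[S]$, which is at least $k|S|/2$ since every vertex of $S$ has degree at least $k$ in $C$. Since each vertex of $S$ has load $m$, this gives $m|S| \geq k|S|/2$, so $m \geq \frac{k}{2}$.
\end{itemize}

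Combining the two steps gives $\Ker_k(I) \subseteq V_{\geq k/2} \subseteq V_{\geq(\rho+\eps)}$, hence $|V_{\geq(\rho+\eps)}| \geq |\Ker_k(I)|$, and the stated bound follows.

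The only nontrivial point is the inequality $w_C \geq k/2$ on the core. It is either quoted from Proposition \ref{balancedload2} or obtained from the minimal-level-set argument above, and the step to check carefully there is that edges leaving $S$ contribute nothing to the load on $S$.
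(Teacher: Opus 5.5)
Your main route is correct and is the paper's own argument. Choose $\eps>0$ with $\rho+\eps\le k/2$, apply Theorem~\ref{possible}, and use point~2 of Proposition~\ref{balancedload2} to get $\Ker_k(I)\subseteq V_{\geq k/2}\subseteq V_{\geq(\rho+\eps)}$. The detour through $w_C\le w_I$ is harmless but unnecessary, since Proposition~\ref{balancedload2} applies to $I$ directly.

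The self-contained fallback, however, is wrong as written, and it fails exactly at the step you flagged.

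\begin{itemize}
\item \emph{The allocation is reversed.} Under the paper's convention $\partial\theta(v)=\sum_{\{u,v\}}\theta(u\to v)$, the number $\theta(u\to v)$ is the share of the edge received by $v$. So for $u\in S$ and $v\notin S$, balancedness puts the whole edge on $u\in S$, not on $v$.
\item \emph{The bound $|C[S]|\ge k|S|/2$ is false in general.} Minimum degree $k$ in $C$ says nothing about degrees inside $C[S]$. Take $k=3$ and let $C$ be a long cycle each of whose vertices has one extra edge to a large clique. The minimal-load set $S$ is the cycle, with true load $2$, while $|C[S]|/|S|=1<\tfrac32$.
\end{itemize}

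The two errors cancel, which is why your conclusion comes out right. Since edges leaving $S$ contribute their full unit to $S$, the correct computation is
\[
m|S|=\sum_{u\in S}\partial\theta(u)=|C[S]|+|C[S\leftrightarrow S^c]|\ \ge\ \tfrac12\sum_{u\in S}\deg_C(u)\ \ge\ \tfrac k2\,|S| .
\]
This is the same count the paper performs when proving point~2 of Proposition~\ref{balancedload2}, via the second inequality of Proposition~\ref{balancedload1}.

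Because you cite that proposition as your primary justification, the proof stands. The fallback should still be repaired as above.
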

\begin{remark}
    Corollary \ref{cores} also technically applies when $k=2$; however, using Proposition \ref{starsnstripes} one may see that it is a very weak statement in this case. Corollary \ref{giant} thus serves as a substitute.
\end{remark}

Using Theorem \ref{ersov}, we may also recover the following result.

\begin{corollary} \label{compare}
    Assume that $(G, H)$ is an inhomogeneous graph system, and assume that $\max_{e \in E} (s p_e) \leq n^{-\alpha + o(1)}$ for some $\frac{1}{2} < \alpha \leq 1$. Denote by $\hat{\pi}$ the estimator given by Theorem \ref{possible}. Then, for any $k > \frac{2}{2\alpha - 1}$, $ov(\hat{\pi}, \pi^*) \geq \frac{|\Ker_k(I)|}{n}(1 - o_\Proba(1))$.

    In particular, if $\ds\min_{u \in V} \left( \sum_{v \in V}  (p_{\{ u, v\}}s^2)\right) \goesto{n \to +\infty} +\infty$, then $ov(\hat{\pi}, \pi^*) = 1 - o_\Proba(1)$: almost exact alignment is feasible.
\end{corollary}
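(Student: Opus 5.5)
The plan is to deduce both statements from Corollary \ref{cores} after bounding $SOV(U)$ through Theorem \ref{ersov} and monotonicity (Lemma \ref{monot}).

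\textbf{Step 1: dominate $U$ by an Erd\H{o}s--R\'enyi graph.} Write $U = \sigma^*(G)\cup H = K_1\cup K_2$, relabelled. Conditionally on $(p_e)$, each edge lies in $U$ independently with probability $q_e = p_e(1-(1-s)^2)\le 2sp_e \le n^{-\alpha+o(1)}$. Fix any $\alpha' \in (\frac12,\alpha)$. For large $n$ we then have $q_e \le n^{-\alpha'}$ for all $e$, uniformly. A standard coupling gives $U \subseteq \overline{K}$ with $\overline{K} \sim \mc{G}(n, n^{-\alpha'})$: sample an independent uniform variable per edge and threshold it at $q_e$ and at $n^{-\alpha'}$. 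Since $\sigma\mapsto w_{K\land_\sigma K}$ is monotone in $K$ (Proposition \ref{balancedload3}), Lemma \ref{monot} gives $SOV(U)\le SOV(\overline{K})$. Theorem \ref{ersov} with $\lambda=1$ then yields $SOV(\overline K) = \frac{1}{2\alpha'-1}$.

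\textbf{Step 2: apply Corollary \ref{cores}.} Given $k > \frac{2}{2\alpha-1}$, choose $\alpha'<\alpha$ close enough that $\frac{2}{2\alpha'-1}<k$, i.e. $SOV(U)<\frac k2$. If $k\ge 3$, Corollary \ref{cores} applies directly. If $k\le 2$, then since $\frac{2}{2\alpha-1}\ge 2$, no valid $k$ exists unless $k>2$. The only remaining case is $2<k<3$; there $\Ker_k(I)=\Ker_3(I)$ for integer cores, and for non-integer $k$ one argues via $V_{\ge k/2}$ directly from Theorem \ref{possible} and Proposition \ref{balancedload2}. I expect this edge case to be routine. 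The one point needing care is that the $n^{o(1)}$ factor is absorbed by the strict inequality $\alpha'<\alpha$.

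\textbf{Step 3: almost exact alignment.} The expected degree of $u$ in $I$ is $\sum_v p_{\{u,v\}}s^2$, which tends to infinity uniformly in $u$. The goal is to show that for every fixed $k$, $|\Ker_k(I)| = n - o_\Proba(n)$. Then taking $k>\frac{2}{2\alpha-1}$ in Step 2 gives $ov = 1-o_\Proba(1)$.

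\textbf{Main obstacle: the $k$-core of $I$ covers almost all vertices.} I would use the peeling characterization: a vertex is outside the $k$-core only if removal cascades to it. The approach is as follows.
\begin{itemize}
\item Show that the number of vertices of $I$-degree below some slowly growing $D_n$ is $o_\Proba(n)$. Since degrees are sums of independent Bernoullis given $(p_e)$, with mean $\to\infty$ uniformly, a Chernoff bound applied to each vertex does this.
\item Show that peeling from this small set cannot remove many vertices. A removed vertex with initial degree $\ge D_n\gg k$ must lose at least $D_n-k$ neighbours. So the removed set $R$ spans at least roughly $|R\setminus \text{low}|\,(D_n-k)/2$ edges of $I$.
\item Conclude using the sparsity $p_e\le n^{-\alpha+o(1)}$: a union bound shows that small sets (of size $\le\delta n$) cannot be this dense in $I$ whp, which forces $|R|=o(n)$.
\end{itemize}
The last bullet is the technical heart. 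If the first-moment union bound over sets of size $\delta n$ fails when $D_n$ grows slowly, I would first try to strengthen the degree bound to a quantile argument, taking $D_n$ large enough that $\binom{n}{m}(e\,m\,n^{-\alpha}/D_n)^{mD_n/2}\to 0$.
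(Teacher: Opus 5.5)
Your Steps 1 and 2 are fine, and they follow the paper's route for the first claim: dominate $U$ by $\mathcal{G}(n,n^{-\alpha'})$, combine Lemma \ref{monot} with Theorem \ref{ersov} to get $SOV(U)<k/2$, then apply Corollary \ref{cores}. For the second claim the paper simply cites \cite[Lemma IV.1]{raczsridhar2023}.

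The gap is the last bullet of your Step 3, which fails as stated. The hypotheses only bound the \emph{maximum} edge probability above by $n^{-\alpha+o(1)}$ and the \emph{minimum} expected degree below. In an inhomogeneous graph these two quantities can be polynomially far apart. For example, take $s=1$ and $\alpha<1$. Let half of the vertices form a block with internal edge probability $n^{-\alpha}$, and give every other pair probability $\omega_n/n$ with $\omega_n\to\infty$ slowly. Then $D_n$ must be $O(\omega_n)$, since otherwise most of the sparse half lies in the low-degree set. Meanwhile every $\delta n$-subset of the dense block spans about $\delta^2n^{2-\alpha}/2\gg\delta nD_n$ edges. So the claim ``sets of size $\delta n$ cannot span $\gtrsim\delta nD_n$ edges'' is false, and no union bound can prove it. Your quantile fallback needs $D_n>e\delta n^{1-\alpha}$, which makes the low-degree set macroscopic. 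Even when $I$ is homogeneous, bounding its edge probabilities by $n^{-\alpha}$ rather than by $s^2p_e$ can make the base $e\delta n^{1-\alpha}/D_n$ exceed $1$.

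The missing idea is that the peeling order itself supplies the upper bound, deterministically. Let $R=V\setminus\Ker_k(I)$, ordered by removal time, and let $L$ be the set of vertices of $I$-degree $<D_n$. When a vertex is removed, it has at most $k-1$ neighbours among the vertices not yet removed. Counting each edge of $I[R]$ at its earlier endpoint therefore gives $|I[R]|\le(k-1)|R|$. Counting at the later endpoint, as you did, gives $|I[R]|\ge(D_n-k+1)\,|R\setminus L|$. Combining the two, $(D_n-2k+2)\,|R\setminus L|\le(k-1)|L|$. Hence $|R|\le|L|\,(1+O(k/D_n))=o_\Proba(n)$ by your first bullet. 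This needs neither the sparsity assumption nor any union bound, and it closes Step 3.
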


We highlight this result because it is directly comparable to \cite[Lemma III.4]{raczsridhar2023}; they use a $k$-core estimator to prove a very similar result. The differences between the two results are the following: 

\begin{itemize}
    \item the authors of \cite{raczsridhar2023} obtain \textbf{exact} guarantees that all of the vertices which they align are correctly matched (with no $o_\Proba(n)$ error tolerance);
    \item their Lemma III.4 requires that $k > \frac{12}{2 \alpha -1}$, whereas we simply need $k > \frac{2}{2 \alpha -1}$.
\end{itemize}

Let us emphasise, however, that the main purpose of this paper is not to slightly improve upon the specific $k$-cores which we are capable of aligning; rather, our framework is primarily intended to be more flexible and not to rely on extreme value $\max_{e \in E} (sp_e)$ as a proxy for the behaviour of the graph. To illustrate this, let us specialise our results to the Chung-Lu model.

\begin{corollary} \label{chunglu}
    Assume that $(G, H)$ is an inhomogeneous graph system, with a constant subsampling parameter $s \in (0, 1]$, where the mother graph $G^\uparrow$ follows the Chung-Lu model, with degree-law $\nu \in L^1$. Assume that $\nu \neq \delta_0$, and set 

    \begin{equation}
        d^* = \frac{\E[D^2]}{\E[D]} \in (0, +\infty]
    \end{equation}
    where $D \sim \nu$ is the degree-law of the model. Then:
    \begin{itemize}
        \item If $s^2 > \frac{1}{d^*}$, then $ \operatorname{pliminf}_{n \to +\infty}ov(\hat{\pi}, \pi^*) > 0$: partial alignment is feasible.
        \item If $s^2 < \frac{1}{d^*}$, then as long as $\nu$ is supported on $[\eps, +\infty)$ for some fixed $\eps > 0$, for any estimator $\hat{\pi}
        '$, $ \ds\operatorname{plimsup}_{n \to +\infty}ov(\hat{\pi}', \pi^*) = 0$: partial alignment is intractable.
    \end{itemize}
\end{corollary}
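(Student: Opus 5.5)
The plan is to reduce both bullet points to the main theorems. For the feasibility part we use Corollary \ref{giant}, and for the infeasibility part we use Theorem \ref{impossible}. Both reductions rest on the standard structure of the intersection graph $I$. Conditionally on the weights $(d_v)$, an edge $e$ belongs to $I$ independently with probability $s^2 p_e$. So $I$ is itself (up to relabelling) a Chung--Lu-type inhomogeneous graph with kernel $s^2 d_u d_v / \sum_w d_w$. Its local weak limit is a multi-type Galton--Watson tree whose root offspring is mixed Poisson with parameter $s^2 D$, while non-root vertices have size-biased type. The branching process is supercritical iff $s^2 \E[D^2]/\E[D] > 1$, i.e.\ iff $s^2 d^* > 1$. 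This is the classical Bollobás--Janson--Riordan giant component theorem for rank-one inhomogeneous graphs, which I would quote.

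\textbf{Feasibility.} First check that $U = G \lor_{\sigma^*} H$ is weakly inhomogeneous. Its edge probabilities are at most $p_e$, and uniform integrability of $n p_{\mathbf{e}}$ follows from $\nu \in L^1$ (the same computation the paper defers to Appendix \ref{corollaryproofs}). Since $U$ has a giant component whenever $I$ does, Corollary \ref{weaksov} gives $SOV(U) = 1$.

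It then remains to verify condition (\ref{techgiant}) for the giant $\mc{G}$ of $I$ when $s^2 d^* > 1$. I would argue this as follows.
\begin{itemize}
\item In the supercritical regime, the $2$-core of the giant has linear size, and its kernel has excess $|E| - |V|$ of linear order. One way to see this is to compute the edge and vertex counts of the giant via the local limit: the giant satisfies $|I[V_\mc{G}]| \ge (1+2\eps)|V_\mc{G}|$ whp, because a supercritical multi-type branching process conditioned on survival has strictly more than one expected edge per vertex in its infinite cluster.
\item Deleting a set $N$ of at most $\eta n$ vertices removes edges incident to $N$. To make this loss at most $\eps |V_\mc{G}|$ for small $\eta$, we need that no $\eta n$ vertices carry more than $\eps' n$ edge-endpoints. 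This is a uniform-integrability statement for degrees: the top $\eta n$ degrees in $I$ sum to $o_\eta(1) n$. It follows from $\nu \in L^1$ together with concentration of degrees given weights.
\end{itemize}
I expect this uniform edge-robustness step, in the heavy-tailed case where $d^* = \infty$ is allowed, to be the main obstacle. I would handle it by truncating weights at a large constant $M$: a vertex-disjoint coupling shows the truncated model is still supercritical for $M$ large, and the bounded-degree truncated graph makes the deletion estimate trivial.

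\textbf{Infeasibility.} Assume $s^2 d^* < 1$ and $\operatorname{supp}\nu \subseteq [\eps, \infty)$.

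\begin{itemize}
\item \emph{Noise condition (\ref{noisycond}).} We have $p_e \ge \min(1, \eps^2 / \sum_w d_w)$. Since $\sum_w d_w = O_\Proba(n)$ by the law of large numbers, this gives $p_e \ge c/n$ whp, so (\ref{noisycond}) holds with $r = 0$. Weak inhomogeneity of $G^\uparrow$ was shown above.
\item \emph{Showing $x_{sparse} = 1$.} In the subcritical regime, the branching process dies out a.s. Hence, for any fixed $\delta$, there is $K$ such that all but $\delta n$ vertices lie in tree components of $I$ of size at most $K$. By local weak convergence, the proportion of vertices in small components that are trees tends to one, since cycles among $O(1)$ vertices are asymptotically absent.
\item \emph{Applying Proposition \ref{balancedload2}.} Vertices in tree components of size at most $K$ have $w_I \le 1 - 1/K$, so they lie in $V_{\le (1-\eps')}$ for $\eps' = 1/K$. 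Letting $\delta \to 0$ gives $x_{sparse} = 1$.
\end{itemize}

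Theorem \ref{impossible} then forces $n - o_\Proba(n)$ errors for any estimator, i.e.\ $\operatorname{plimsup}_{n \to +\infty} ov(\hat{\pi}', \pi^*) = 0$.
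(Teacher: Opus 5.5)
Your proposal is correct and follows the paper's own route. For $s^2 d^*>1$ you use Corollary \ref{giant}, with $SOV(U)\le 1$ coming from weak inhomogeneity, plus linear excess and degree uniform integrability of the giant of $I$. For $s^2 d^*<1$ you use Theorem \ref{impossible}, with $x_{sparse}=1$ coming from the finite-tree local limit. The paper simply cites \cite{vanderhofstad2024} for both of these random-graph inputs.

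Two small corrections:
\begin{itemize}
\item The bound $p_e\ge c/n$ with $c=\eps^2/(2\E[D])$ gives (\ref{noisycond}) for any $r>0$, but not for $r=0$, because $c$ may be below $1$.
\item Truncation is not needed for the degree bound, even when $d^*=\infty$. Uniform integrability of degrees in $I$ follows from $\nu\in L^1$ alone. If you do truncate, you must transfer back to $I$ via $w_{I_M}\le w_I$ (Proposition \ref{balancedload3}) and the proof of Corollary \ref{giant}. Its statement cannot be applied directly, since the giant of $I_M$ is not in general a component of $I$.
\end{itemize}
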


In this situation, $G^\uparrow$ is weakly inhomogeneous, so that $SOV(U) \leq 1$; thus, this threshold lines up which the threshold for the existence of a giant component (and a $2$-core) within the graph $I$. In particular, we will note that if $\E[D^2] = +\infty$, then the threshold is at $0$: there is always a giant component in the intersection graph, and partial alignment is always feasible. 

We may also state a similar theorem for the stochastic block model.

\begin{corollary} \label{sbm}
    Assume that $(G, H)$ is an inhomogeneous graph system, with a constant subsampling parameter $s \in (0, 1]$, where the mother graph $G^\uparrow$ follows the stochastic block model. If $V_i, V_j$ are communities, denote $\alpha_i = \frac{|V_i|}{n}$ the size of community $i$ and $\frac{q_{ij}}{n}$ the edge probability from $V_{i}$ to $V_j$. We then define $\lambda = \lambda_n$ to be the Perron-Frobenius eigenvalue of the matrix 
    \begin{equation}
        Q D = (q_{ij})_{1 \leq i,j \leq m} \cdot \diag(\alpha_1, \ldots, \alpha_m).
    \end{equation}
    Assume that for any $i, j$, $q_{ij} > 0$ is a constant, independent from $n$. Then: 
       \begin{itemize}
        \item if $\ds\operatorname*{pliminf}_{n  \to +\infty} (s^2 \lambda_n) > 1$, then $\ds\operatorname*{pliminf}_{n \to +\infty}ov(\hat{\pi}, \pi^*) > 0$: partial alignment is feasible.
        \item if $\ds\operatorname*{plimsup}_{n  \to +\infty} (s^2 \lambda_n) < 1$, then for any estimator $\hat{\pi}'$, $ \ds\operatorname*{plimsup}_{n \to +\infty}ov(\hat{\pi}', \pi^*) = 0$: partial alignment is intractable.
    \end{itemize}
\end{corollary}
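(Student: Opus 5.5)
The plan is to derive both items from Theorem \ref{possible}, Corollary \ref{giant} and Theorem \ref{impossible}, together with the standard branching-process theory for inhomogeneous random graphs of finite type (Bollob\'as--Janson--Riordan).

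\textbf{Structure of $I$ and $U$.} Conditionally on the community structure, $I = G\land_{\sigma^*}H$ is itself an inhomogeneous random graph. Its edge probabilities are $s^2 q_{ij}/n$ between $V_i$ and $V_j$, so it is a stochastic block model with kernel $s^2 Q$ and type proportions $\alpha_i$. The union $U$ is a subgraph of $\sigma^*$-relabelled $G^\uparrow$, and $G^\uparrow$ is weakly inhomogeneous because the $q_{ij}$ are bounded. By monotonicity of $SOV$ (Lemma \ref{monot}) and Corollary \ref{weaksov}, $SOV(U)\le 1$. When $I$ has a giant component, $U$ has one too, and Corollary \ref{weaksov} then gives $SOV(U)=1$. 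This is exactly the hypothesis needed for Corollary \ref{giant}.

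\textbf{Feasibility.} Since all $q_{ij}>0$, the kernel is irreducible. The local weak limit of $I$ is a multitype Poisson Galton--Watson tree with mean offspring matrix $s^2 QD$, whose Perron root is $s^2\lambda_n$. Along a subsequence the $\alpha_i$ converge, so we may assume $\lambda_n\to\lambda$. If $s^2\lambda>1$, the Bollob\'as--Janson--Riordan results give a unique giant component $\mc{G}$ of linear size. The main obstacle is the robustness condition (\ref{techgiant}): deleting any $\eta n$ vertices must leave at least $(1+\eps)|V_\mc{G}|$ edges inside $\mc{G}$. I would handle it as follows.
\begin{itemize}
\item Show that the $2$-core of $\mc{G}$ has excess $|E|-|V|\ge c n$. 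This follows from the known asymptotics of the $2$-core and kernel of supercritical inhomogeneous graphs, which have linear size and positive limiting density of edges minus vertices.
\item Work with the $2$-core $\mc{C}$ in place of $\mc{G}$.
\item Note that deleting $\eta n$ vertices removes at most $\sum_{v\in N}\deg v$ edges. Bounded expected degrees and Poisson tails imply that the top $\eta n$ degrees sum to $o_\eta(1)\,n$.
\item Conclude that $|I[\mc{C}\setminus N]|\ge |\mc{C}|+cn-o_\eta(1)n\ge(1+\eps)|\mc{C}|$ once $\eta$ is small.
\end{itemize}
If Corollary \ref{giant} insists on a connected component, I would instead apply Theorem \ref{possible} directly. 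Via Proposition \ref{balancedload2}, positive excess of a dense subgraph forces $w_I>1+\eps$ on a linear set of vertices, namely those in a subgraph of density above $1+\eps$.

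\textbf{Impossibility.} If $s^2\lambda<1$, the multitype branching process is subcritical. Hence, with probability $1-o(1)$, all but $o(n)$ vertices lie in components that are trees of size at most $M$, for $M$ large. More precisely, the fraction of vertices in components of size $>M$ or containing a cycle tends to a quantity that vanishes as $M\to\infty$: cycles in bounded components are $O(1)$ in total, and component sizes have exponential tails. By the characterisation of $V_{\le 1-\eps}$ in Proposition \ref{balancedload2}, this gives $x_{sparse}=1$. Condition (\ref{noisycond}) holds trivially, since $p_e\ge \min q_{ij}/n$. $G^\uparrow$ is weakly inhomogeneous, so Theorem \ref{impossible} implies that any estimator errs on $n-o_\Proba(n)$ vertices, that is, $\operatorname{plimsup} ov=0$.
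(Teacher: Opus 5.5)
Your proposal is correct and follows the paper's route: the paper invokes the stochastic block model analogue of Theorem \ref{giantexistencechunglu} (a giant with linear excess and uniformly integrable degrees when supercritical, local convergence to finite trees when subcritical), then repeats the Chung--Lu argument via Corollary \ref{giant} and Theorem \ref{impossible}, which is what you do with Bollob\'as--Janson--Riordan. The detour through the $2$-core is unnecessary: pruning pendant trees does not change the excess, so the giant component itself satisfies (\ref{techgiant}) and Corollary \ref{giant} applies verbatim. This also makes your fallback superfluous, and that fallback's claim that every vertex of a subgraph of density above $1+\eps$ has load above $1+\eps$ is false as stated; you would need the degree-tail argument from the proof of Corollary \ref{giant}.
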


A similar result also holds for noisy random geometric graphs in the weakly inhomogeneous case (though the threshold will of course depend upon the underlying sequence of metric spaces).

\section{The balanced load function} \label{balancedload}
In this section, $K$ is a graph over $V$.

As we have seen, an important object in our study of the graph alignment problem is the balanced load function $w$. In this section, we recall some of its basic properties. All proofs are deferred to the appendix.

Balanced allocations were initially studied in \cite{hajek1990} in the context of resource allocation problems; however, they have since been applied to a broader class of problems, including identification of the densest subgraph \cite{anantharam2016} and, more recently, graph alignment \cite{dingdu2023detection, du2025}. Indeed, this function provides a notion of "local density" of $K$ around a given vertex $v \in V$ which turns out to be quite helpful. The following proposition is a manifestation of this fact.

If $t \geq 0$, we define
\begin{equation}
        V_{\geq t} = V_{\geq t}(K) = \{ v \in V: w_K(v) \geq t \}; \qquad V_{< t} = \{ v \in V: w_K(v) < t \}.
    \end{equation}

\begin{proposition} \label{balancedload1}
    Let $t \geq 0$.
    Then, if $X \subseteq V_{\geq t}$,

    \begin{equation}
       \sum_{u \in X} w_K(u) \leq |K[{X \leftrightarrow V_{\geq t}}]| .
    \end{equation}
    Similarly, if $Y \subseteq V_{< t}$,
    \begin{equation}
        \sum_{u \in Y} w_K(u) \geq |K[{Y \leftrightarrow (V_{\geq t} \cup Y)}]|.
    \end{equation}

    In particular, $\dsum_{u \in V_{\geq t}} w_K(u) = |K[V_{\geq t}]|$.
\end{proposition}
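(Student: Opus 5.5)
Fix a balanced allocation $\theta$ on $K$, so that $\partial\theta = w_K$. Both inequalities will come from counting, edge by edge, how much of each edge's unit mass goes to the vertices of $X$ (or $Y$). The balance condition tells us in which direction mass may flow across the level $t$. The key observation is this: if $\{u,v\}\in K$ with $w_K(u) \geq t > w_K(v)$, then $\partial\theta(v) < \partial\theta(u)$, so balance forces $\theta(v\to u) = 0$, i.e. $\theta(u\to v)=1$. The edge gives all of its mass to the low-load endpoint $v$ and nothing to the high-load endpoint $u$.

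For the first inequality, take $X\subseteq V_{\geq t}$ and write
\[
\sum_{u\in X} w_K(u) = \sum_{u\in X}\sum_{\{v,u\}\in K}\theta(v\to u).
\]
Split the edges incident to $X$ according to whether the other endpoint lies in $V_{\geq t}$ or in $V_{<t}$. By the observation, edges going to $V_{<t}$ contribute $0$. Each remaining edge lies in $K[X\leftrightarrow V_{\geq t}]$. If it has exactly one endpoint in $X$, it contributes $\theta(\cdot)\le 1$. If it has both endpoints in $X$, it contributes $\theta(u\to v)+\theta(v\to u)=1$. Either way each edge of $K[X\leftrightarrow V_{\geq t}]$ contributes at most $1$, which gives the bound.

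For the second inequality, take $Y\subseteq V_{<t}$ and count from below. Each edge of $K[Y\leftrightarrow (V_{\geq t}\cup Y)]$ either has both endpoints in $Y$, contributing exactly $1$ to $\sum_{u\in Y}\partial\theta(u)$, or joins $Y$ to $V_{\geq t}$, in which case the observation says it contributes its full mass $1$ to its endpoint in $Y$. All other edges incident to $Y$ contribute nonnegative amounts. Summing gives $\sum_{u\in Y} w_K(u)\ge |K[Y\leftrightarrow(V_{\geq t}\cup Y)]|$.

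For the final identity, set $X=V_{\geq t}$. Every edge of $K[V_{\geq t}]$ has both endpoints in $X$ and contributes exactly $1$, while edges to $V_{<t}$ contribute $0$. So the first inequality is in fact an equality. The only point needing care is handling edges with both endpoints in $X$ (or $Y$) separately from edges with one endpoint in the set, so that no edge is double counted. Apart from that, the only nontrivial input is the observation above, which is just the definition of a balanced allocation. The fact that $w_K$ is independent of the choice of $\theta$ is assumed from the definition.
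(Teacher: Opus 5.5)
Your proof is correct and follows the paper's argument. You fix a balanced allocation, use the balance condition to show that edges between $V_{\geq t}$ and $V_{<t}$ give nothing to the high-load endpoint, and count each edge's contribution (making explicit the full-unit transfer to $Y$ that the paper uses only implicitly). The only cosmetic difference is the final identity: you note directly that the first inequality is tight for $X = V_{\geq t}$, whereas the paper applies the second inequality with $Y = V_{\geq t}$ and a threshold $t'$ larger than $\max_v w_K(v)$.
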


In certain cases, the sets $V_{\geq t}$ can also give us extra information regarding certain structural properties of $K$.

\begin{proposition} \label{balancedload2}
    \begin{enumerate}
        \item Let $\eps > 0$. Then, the subgraph $K[{V_{\leq(1-\eps)}}]$ is a union of connected components of $K$, each of which is a tree supported on a vertex set of size at most $\frac{1}{\eps}$.
        \item Let $k \geq 2$, and denote by $\Ker_k (K)$ the set of vertices inside the $k$-core of $K$. Then, $V_{\geq k} \subseteq \Ker_k(K) \subseteq V_{\geq \frac{k}{2}}$.
       
        \item Let $\rho_m = \max_{X \subseteq V} \dfrac{|K[X]|}{|X|}$. Then, $\rho_m = \max_{v \in V} w_K(v)$, and the set $X = V_{\geq \rho_m}$ maximises $\dfrac{|K[X]|}{|X|}$.

    \end{enumerate}
\end{proposition}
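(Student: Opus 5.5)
We prove the three items in the order (3), (2), (1), because (1) uses (3). Throughout we fix a balanced allocation $\theta$, so that $w_K = \partial\theta$. The one structural fact we use repeatedly is a rewriting of the balancing rule. If $\{u,v\}\in K$ and $w_K(u)<w_K(v)$, then $\theta(u\to v)=0$, hence $\theta(v\to u)=1$. In words, the whole unit of an edge goes to its endpoint of strictly smaller load. Equivalently, a vertex $v$ only receives load from neighbours $u$ with $w_K(u)\geq w_K(v)$, and receives exactly $1$ from each neighbour of strictly larger load.

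\emph{Item 3.} Let $X\subseteq V$. Every edge of $K[X]$ distributes its unit of load between two vertices of $X$, so
\begin{equation*}
|K[X]| \leq \sum_{u\in X} w_K(u) \leq |X|\max_{v\in V} w_K(v).
\end{equation*}
This gives $\rho_m\leq \max_{v\in V} w_K(v)$. Conversely, set $M=\max_{v\in V} w_K(v)$ and $X=V_{\geq M}$, which is the set of vertices where the maximum is attained. By the last statement of Proposition \ref{balancedload1}, $|K[X]| = \sum_{u\in X}w_K(u) = M|X|$. Hence $X$ attains the ratio $M$, and therefore $\rho_m = M$ and $X$ is a maximiser.

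\emph{Item 2.} For $V_{\geq k}\subseteq \Ker_k(K)$, take $v\in V_{\geq k}$. By the structural fact, all of the load of $v$ comes from neighbours $u$ with $w_K(u)\geq w_K(v)\geq k$. Each such neighbour contributes at most $1$, so $v$ has at least $k$ neighbours in $V_{\geq k}$. Thus $K[V_{\geq k}]$ has minimum degree at least $k$ and is contained in the $k$-core.

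For $\Ker_k(K)\subseteq V_{\geq k/2}$, let $C=\Ker_k(K)$, let $m=\min_{C} w_K$, and let $Y=\{v\in C: w_K(v)=m\}$. Write $\mathrm{out}$ for the number of edges from $Y$ to $C\setminus Y$. Each such edge goes to a vertex of load $>m$, so it gives its full unit to $Y$; each edge of $K[Y]$ gives its unit to $Y$. Therefore
\begin{equation*}
m|Y| \geq |K[Y]| + \mathrm{out}.
\end{equation*}
Every vertex of $C$ has degree at least $k$ in $K[C]$, so summing degrees over $Y$ gives $k|Y|\leq 2|K[Y]|+\mathrm{out}$. Consequently
\begin{equation*}
|K[Y]|+\mathrm{out}\geq \tfrac{1}{2}\left(2|K[Y]|+\mathrm{out}\right)\geq \tfrac{k}{2}|Y|,
\end{equation*}
and so $m\geq k/2$.

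\emph{Item 1.} \emph{Step (a): no edges leave.} Let $S=\{v: w_K(v)<1\}$ and let $C$ be a connected component of $K[S]$, with $c=|C|$. Edges from $C$ to $V\setminus S$ go to vertices of larger load, so each gives its full unit to $C$. There are no edges from $C$ to $S\setminus C$, by the choice of $C$. Hence
\begin{equation*}
c > \sum_{C} w_K = |K[C]| + \mathrm{out}(C),
\end{equation*}
where $\mathrm{out}(C)$ counts edges from $C$ to $V\setminus S$. Since $K[C]$ is connected we have $|K[C]|\geq c-1$, and both terms are integers. This forces $|K[C]|=c-1$ and $\mathrm{out}(C)=0$. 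So $C$ is a tree and a connected component of $K$.

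\emph{Step (b): loads on a tree component.} Loads on distinct components are computed independently, so we may apply item 3 to $K[C]$. In a tree on $c$ vertices, any proper subset of $k<c$ vertices spans at most $k-1$ edges, which gives ratio $(k-1)/k<(c-1)/c$. Hence $C$ itself is the unique maximiser. By item 3, $C=V_{\geq \rho_m}(K[C])$, so every vertex of $C$ has load exactly $1-1/c$.

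\emph{Step (c): conclusion.} By Steps (a) and (b), $V_{\leq 1-\eps}$ is exactly the union of the tree components of size $c$ with $1-1/c\leq 1-\eps$, that is, with $c\leq 1/\eps$. The only delicate point is Step (a), the integrality squeeze showing that low-load components are isolated trees; everything else follows from the "load flows downhill" rule.
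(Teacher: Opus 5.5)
Your proof is correct. Items 2 and 3 follow the paper's arguments. For the reverse inclusion in item 2, the paper takes $Y=\{v\in\Ker_k(K): w_K(v)<k/2\}$ and applies the second inequality of Proposition \ref{balancedload1}. You take the minimum-load vertices of the core and derive the same inequality by hand, which is the same mechanism.

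Item 1 is where you genuinely differ. The paper argues locally. If $w_K(u)\le 1-\eps$ and some neighbour $v$ had $w_K(v)>1-\eps$, then $u$ would receive the whole edge $\{u,v\}$, forcing $w_K(u)\ge 1$. Propagating, the entire component $\mathcal{A}$ of $u$ lies in $V_{\le(1-\eps)}$. Then $|\mathcal{A}|-1\le \sum_{\mathcal{A}} w_K\le (1-\eps)|\mathcal{A}|$ gives both the tree property and $|\mathcal{A}|\le 1/\eps$ in one line, with no appeal to item 3. Your integrality squeeze would do the same job directly if run on components of $K[V_{\le(1-\eps)}]$ instead of $K[\{w_K<1\}]$.

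Your detour through Step (b) is longer, but it proves more. It shows that every vertex of a tree component on $c$ vertices has load exactly $1-1/c$. Hence $V_{\le(1-\eps)}$ is \emph{exactly} the union of the tree components of size at most $1/\eps$. The paper's proof only establishes the inclusion stated in the proposition. Yet the reverse inclusion is what is actually used in the remark after Theorem \ref{impossible}, and in the proof of Corollary \ref{chunglu}, where $|V_{\le(1-\eps)}|\approx n$ is deduced from the local tree structure of $I$. Your argument supplies that missing direction.
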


Finally, we will note some monotonicity and stability properties of $w$, which are not a priori obvious.

\begin{proposition}\label{balancedload3}
    Let $K'$ be a graph over $V$. Then:
    \begin{itemize}
        \item if $K \subseteq K'$, then $w_K \leq w_{K'}$;
        \item if $|K \Delta K'| \leq \eps |V|$ for some $\eps > 0$, then there exists $\mc{A} \subseteq V$ such that $|\mc{A}| \leq 2 \sqrt{\eps} |V|$ and, for $v \notin \mc{A}$, $|w_K(v) - w_{K'}(v)| \leq 2\sqrt{\eps}$.
    \end{itemize}
\end{proposition}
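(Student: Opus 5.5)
The plan is to prove monotonicity directly from the definition of a balanced allocation, by an extremal argument. Stability then follows from monotonicity together with the identity that total load equals the number of edges.

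\emph{Monotonicity.} Let $K \subseteq K'$, with balanced allocations $\theta$ on $K$ and $\theta'$ on $K'$. Suppose for contradiction that $\delta = \max_v (w_K(v) - w_{K'}(v)) > 0$, and let $S$ be the set of vertices attaining this maximum. I would compare $\sum_{v \in S} w_K(v)$ with $\sum_{v\in S} w_{K'}(v)$, splitting each sum by edge type.
\begin{itemize}
\item Each edge of $K$ (resp. $K'$) with both endpoints in $S$ contributes exactly $1$ to the corresponding sum. Since $K \subseteq K'$, these contributions are at least as large for $K'$.
\item Consider an edge $\{u,v\} \in K$ with $v \in S$, $u \notin S$ and $\theta(u \to v) > 0$. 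Balancedness gives $w_K(u) \geq w_K(v)$. Since $u \notin S$, we have $w_{K'}(u) > w_K(u) - \delta \geq w_K(v) - \delta = w_{K'}(v)$. Balancedness of $\theta'$ then forces $\theta'(v \to u) = 0$, that is, $\theta'(u \to v) = 1 \geq \theta(u\to v)$.
\item Edges of $K' \setminus K$ leaving $S$ contribute nonnegatively to the $K'$ sum.
\end{itemize}
Summing these three cases gives $\sum_S w_{K'} \geq \sum_S w_K$. This contradicts $w_K > w_{K'}$ holding strictly on the nonempty set $S$.

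\emph{Stability.} First, for any graph the total load satisfies $\sum_{v \in V} w_K(v) = |K|$, since each edge distributes total mass $1$ (this is also the case $t=0$ of Proposition \ref{balancedload1}). Set $L = K \cap K'$ and define $d_1 = w_K - w_L$ and $d_2 = w_{K'} - w_L$. By monotonicity both are pointwise nonnegative, and by the total-load identity
\[
\sum_v \bigl(d_1(v)+d_2(v)\bigr) = |K \setminus L| + |K' \setminus L| = |K \Delta K'| \leq \eps |V|.
\]
Since $|w_K - w_{K'}| \leq d_1 + d_2$ pointwise, Markov's inequality shows that $\mc{A} = \{v : d_1(v)+d_2(v) > 2\sqrt{\eps}\}$ has $|\mc{A}| \leq \frac{\sqrt{\eps}}{2}|V| \leq 2\sqrt{\eps}|V|$. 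Outside $\mc{A}$ we have $|w_K(v) - w_{K'}(v)| \leq 2\sqrt{\eps}$, as required.

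The only delicate step is the boundary-edge analysis in the monotonicity argument. Choosing $S$ as the argmax of the difference is exactly what makes balancedness of $\theta$ transfer to a forced orientation in $\theta'$. With a cruder choice of $S$, such as all vertices where $w_K > w_{K'}$, the inequality on cut edges would not close. Existence and uniqueness of $w$ are taken as given from the definition.
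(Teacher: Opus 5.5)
\textbf{Verdict.} Your proof is correct. It reaches the result partly by the same route as the paper and partly by a different one.

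\textbf{Stability.} This part follows the paper's structure: monotonicity, plus the total-load identity $\sum_v w_K(v) = |K|$, plus Markov's inequality. The differences are cosmetic. The paper compares both graphs to the common supergraph $K \cup K'$ and applies Markov separately to each pair, then unions the two exceptional sets; that is where its constants $2\sqrt{\eps}$ come from. You compare both graphs to the common subgraph $K \cap K'$ and apply Markov once to $d_1 + d_2$. This gives the slightly sharper bound $|\mc{A}| \leq \frac{\sqrt{\eps}}{2}|V|$.

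\textbf{Monotonicity.} This is where the routes genuinely differ. The paper gives no argument here and simply cites Lemma 2.2 of Du (2025). Your extremal cut-edge argument is correct and makes the proposition self-contained, at the cost of a paragraph.

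\textbf{Correction to your closing remark.} The argmax choice of $S$ is not needed. Take instead the cruder set $S = \{v : w_K(v) > w_{K'}(v)\}$, and consider a cut edge with $v \in S$, $u \notin S$ and $\theta(u \to v) > 0$. Then $w_{K'}(u) \geq w_K(u) \geq w_K(v) > w_{K'}(v)$. So balancedness of $\theta'$ again forces $\theta'(u \to v) = 1$, and the same summation over $S$ gives the contradiction. The inequality on cut edges therefore does close with this choice as well, and this is arguably the more standard form of the argument. This slip affects only your commentary, not the validity of your proof.
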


\begin{remark}
    Though $w_K(v)$ can be conceptualised as the "local density" of $K$ at $v$, it is important to keep in mind that it is not actually a local function of $v$ in any reasonable sense. For instance, there exist sequences $(K_n, v_n)_{n \in \N}$ and $(K_n', v_n')_{n \in \N}$ of rooted graphs such that both rooted graphs have the same local weak limit, but $\lim_n|w_{K_n}(v_n) - w_{K'_n}(v'_n)|$ is arbitrarily large.

    This has important computational repercussions. Indeed, if $w_K(v)$ only depended on small neighbourhoods of $v$, we believe that our estimator from Theorem \ref{possible} could likely be modified to be computable in quasi-polynomial time. What we instead observe (for instance in the Erd\H{o}s-R\'enyi case) is an informational/computational gap: there are situations where said estimators are correct but the graph alignment problem remains intractable in polynomial time.
\end{remark}

\section{Self-overlap} \label{sov}
In this section, we assume that $K = (K_n)_{n \in \N}$ is a sequence of random graphs over $V$.
\subsection{Motivating the self-overlap}

Recall that we have defined
\begin{equation}
    SOV(K) = \operatorname*{lim}_{\eps \to 0} \operatorname*{plimsup}_{n \to +\infty}  \max_{\sigma \in \Sym_n}\max_{\underset{|\mc{A}| \geq \eps n}{\mc{A} \subseteq \Der \sigma}}  \operatorname*{min}_{v \in \mc{A}} w_{(K \land_\sigma K)}(v).
\end{equation}

Let us try to motivate this definition a little bit.

In a certain sense, this parameter quantifies how macroscopically self-similar the graph $K$ is. It is conceptually similar to the simpler statistic
\begin{equation} \label{tilde1}
    SOV^{(1)}(K) =  \operatorname*{plimsup}_{n \to +\infty} \max_{\sigma \in \mc{D}_n} \frac{|K \land_\sigma K|}{n},
\end{equation}
where $\mc{D}_n$ is the set of permutations $\sigma \in \Sym_n$ with no fixed points.

However, this simpler statistic is simply not robust enough to capture the information we care about: for instance, we can reduce $ SOV^{(1)}(K)$ by adding many isolated vertices to $K$, which does not make the graph any less self-similar. One fix might be to look at the more uniform statistic

\begin{equation} \label{tilde2}
     SOV^{(2)}(K) = \operatorname*{lim}_{\eps \to 0} \operatorname*{plimsup}_{n \to +\infty}  \max_{\sigma \in \Sym_n}\max_{\underset{|\mc{A}| \geq \eps n}{\mc{A} \subseteq \Der \sigma}}  \frac{|(K \land_\sigma K)[\mc{A}]|}{|\mc{A}|}
\end{equation}
and this is much closer to our definition of $SOV$. However, (\ref{tilde2}) fails to account for edges between $\mc{A}$ and $\mc{A}^c$: this is important, since in sparse graphs the overlap will often be carried by these outgoing edges. We can repair this in the following fashion: if $\theta_\sigma$ is a balanced allocation on $K \land_\sigma K$, we may define

\begin{equation} \label{tilde3}
     SOV^{(3)}(K) = \operatorname*{lim}_{\eps \to 0} \operatorname*{plimsup}_{n \to +\infty}  \max_{\sigma \in \Sym_n}\max_{\underset{|\mc{A}| \geq \eps n}{\mc{A} \subseteq \Der \sigma}} \frac{1}{|\mc{A}|} \sum_{u \in \mc{A}} \sum_{v \in V} \theta_\sigma(v \to u).
\end{equation}

which adds the edges linking $\mc{A}$ to $\mc{A}^c$ to the calculation, with an "adequate" weighting taking into account the general density of the graph. In fact, $SOV^{(3)}$ is almost the same as $SOV$: we have simply replaced the term $\ds\operatorname*{min}_{v \in \mc{A}} w_{(K \land_\sigma K)}(v)$ in the definition of $SOV$ with $\ds\frac{1}{|\mc{A}|} \sum_{v \in \mc{A}} w_{(K \land_\sigma K)}(v)$. (In particular, $SOV^{(3)} \geq SOV$.)

Of course, this discussion doesn't really explain why $SOV$ is an interesting definition for the purpose of graph alignment. For this, we point towards Theorem \ref{possible} and its proof, in which the quantity $SOV(K)$ appears fairly naturally.

\subsection{Properties of $SOV$} \label{sov2}

We begin with the following basic properties.

\begin{lemma} \label{monot}
    Let $K' = (K'_n)$ be a sequence of random graphs with vertex set $[n]$.
    \begin{itemize}
         \item If $K \simeq_{n \to +\infty} K'$ then $SOV(K) = SOV(K')$.
        \item If $K \subseteq K'$ whp then $SOV(K) \leq SOV(K')$.
    \end{itemize}
\end{lemma}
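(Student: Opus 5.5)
We prove the second item first, since it is the simpler one, and then derive the first from it together with the stability part of Proposition \ref{balancedload3}.

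\emph{Monotonicity.} Suppose $K \subseteq K'$ holds on an event $\Omega_n$ with $\Proba(\Omega_n) \to 1$. On $\Omega_n$, for every $\sigma \in \Sym_n$ we have $K \land_\sigma K = \sigma(K) \cap K \subseteq \sigma(K') \cap K' = K' \land_\sigma K'$. The first item of Proposition \ref{balancedload3} then gives $w_{K \land_\sigma K} \leq w_{K' \land_\sigma K'}$ pointwise. Fix $\eps$ and write $M_n^\eps(K)$ for the inner double maximum over $\sigma$ and $\mc{A}$. Since the constraint set for $(\sigma, \mc{A})$ does not depend on the graph, we get $M_n^\eps(K) \leq M_n^\eps(K')$ on $\Omega_n$. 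Events of vanishing probability do not affect $\operatorname{plimsup}$, because $\limsup \Proba(X_n \geq x) \leq \limsup \Proba(Y_n \geq x) + \limsup \Proba(\Omega_n^c)$. Hence $\operatorname{plimsup}_n M_n^\eps(K) \leq \operatorname{plimsup}_n M_n^\eps(K')$, and letting $\eps \to 0$ gives the claim. We also record that $M_n^\eps$ is nonincreasing in $\eps$, so the limit in $\eps$ exists in $[0,+\infty]$.

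\emph{Invariance under $o(n)$ perturbations.} Suppose $|K \Delta K'| \leq \delta_n n$ with $\delta_n \to 0$ whp. By symmetry it suffices to prove $SOV(K) \leq SOV(K')$. For any $\sigma$,
\[
|(K\land_\sigma K) \Delta (K' \land_\sigma K')| \leq |\sigma(K)\Delta\sigma(K')| + |K \Delta K'| = 2|K\Delta K'| \leq 2\delta_n n,
\]
and this bound holds uniformly in $\sigma$. Apply the second item of Proposition \ref{balancedload3} with parameter $2\delta_n$. This yields an exceptional set $\mc{E}_\sigma$ with $|\mc{E}_\sigma| \leq 2\sqrt{2\delta_n}\, n$, outside of which $|w_{K\land_\sigma K} - w_{K'\land_\sigma K'}| \leq 2\sqrt{2\delta_n}$.

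Now take any pair $(\sigma, \mc{A})$ admissible for $K$ at level $\eps$, and set $\mc{A}' = \mc{A} \setminus \mc{E}_\sigma$. Then $\mc{A}' \subseteq \Der\sigma$ and $|\mc{A}'| \geq (\eps - 2\sqrt{2\delta_n})n \geq \frac{\eps}{2} n$ for large $n$. Moreover
\[
\min_{\mc{A}'} w_{K'\land_\sigma K'} \geq \min_{\mc{A}'} w_{K\land_\sigma K} - 2\sqrt{2\delta_n} \geq \min_{\mc{A}} w_{K\land_\sigma K} - 2\sqrt{2\delta_n},
\]
since shrinking the set can only raise the minimum. Therefore $M_n^\eps(K) \leq M_n^{\eps/2}(K') + o(1)$ whp. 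This gives $\operatorname{plimsup} M_n^\eps(K) \leq \operatorname{plimsup} M_n^{\eps/2}(K')$, and taking $\eps \to 0$ on both sides gives $SOV(K) \leq SOV(K')$.

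The one delicate point is the meaning of $K \simeq K'$ for random graphs. I interpret it as $|K \Delta K'| = o_\Proba(n)$, that is, $|K\Delta K'| \leq \delta_n n$ with some deterministic $\delta_n \to 0$ on an event of probability tending to $1$. A diagonal extraction from the convergence in probability provides such a $\delta_n$. With that in hand, the main obstacle is only to ensure that the stability estimate is uniform over the maximum over $\sigma$. The bound on the symmetric difference of the intersections above is independent of $\sigma$, which is exactly what makes this work.
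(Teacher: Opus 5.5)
Your proof is correct and takes the same approach as the paper, whose entire proof is ``these follow immediately from Proposition~\ref{balancedload3}'': you apply both items of that proposition to $K \land_\sigma K$ versus $K' \land_\sigma K'$, using the bound $2|K \Delta K'|$ on their symmetric difference, which is uniform in $\sigma$. You have written out the details the paper leaves implicit: the $\operatorname{plimsup}$ comparison, removing the exceptional set from $\mc{A}$, and passing from $\eps$ to $\eps/2$.
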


\begin{proof}
    These follow immediately from Proposition \ref{balancedload3}.
\end{proof}

We will make extensive use of these monotonicity/stability properties later on.

At this point, the reader may be wondering what the quantity $SOV(K)$ actually tends to looks like. The following result is a first step towards understanding this.

\begin{proposition} \label{starsnstripes}
    Assume that, with high probability, $K$ has a giant component (i.e. a connected component of size $\Theta_\Proba(n)$). Then, $SOV(K) \geq 1$.
\end{proposition}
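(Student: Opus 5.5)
The plan is to construct, for every $\eta > 0$, a permutation $\sigma$ and a set $\mc{A} \subseteq \Der \sigma$ with $|\mc{A}| \geq \eps(\eta) n$ on which $w_{K \land_\sigma K} \geq 1 - \eta$. The quantity inside the limit in the definition of $SOV$ is non-increasing in $\eps$. Hence obtaining $\operatorname*{plimsup} \geq 1-\eta$ at a single $\eps(\eta) > 0$ gives the same bound for all smaller $\eps$, so $SOV(K) \geq 1 - \eta$, and then we let $\eta \to 0$.

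To certify the load bound, I use Proposition \ref{balancedload2}(1) in contrapositive form. If a vertex $v$ lies in a connected component of $K \land_\sigma K$ with more than $1/\eta$ vertices, then $w_{K\land_\sigma K}(v) > 1-\eta$. So it suffices that every vertex of $\mc{A}$ sits in a component of $\sigma(K) \cap K$ of size at least $L \defeq \lceil 1/\eta \rceil + 2$.

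The building blocks are two local gadgets inside $K$, both disjoint from everything else and with $\sigma$ equal to the identity outside them.
\begin{enumerate}
\item Path gadget. Take a path $v_1 v_2 \cdots v_m$ in $K$ with $m \geq L$, and let $\sigma$ act on it as the cyclic shift $v_i \mapsto v_{i+1}$ (indices mod $m$). For $i \leq m-2$, the edge $\{v_i, v_{i+1}\}$ is sent to $\{v_{i+1}, v_{i+2}\} \in K$. Hence $\sigma(K)\cap K$ contains the path $v_2 \cdots v_m$, which is connected on $m-1 \geq L-1$ vertices, and all $m$ path vertices are deranged.
\item Star gadget. Take a vertex $c$ with $m \geq L$ neighbours $\ell_1, \dots, \ell_m$ in $K$, fix $c$, and cyclically permute the $\ell_i$. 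Every edge $c\ell_i$ is mapped to another edge $c\ell_{i+1}$ of $K$. Hence the whole star lies in $\sigma(K)\cap K$, and the $m$ leaves are deranged.
\end{enumerate}
Gadgets on disjoint vertex sets compose into a single permutation. Each gadget's connected piece of $\sigma(K)\cap K$ survives, possibly enlarged, because adding edges only merges components. We then take $\mc{A}$ to be the union of the deranged path vertices and star leaves.

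It remains to find vertex-disjoint gadgets covering $\geq \eps n$ vertices whp. Fix a spanning tree $T$ of the giant component, which has $\geq cn$ vertices with probability bounded below. The key combinatorial fact is a K\"onig-type bound: a tree with maximum degree $< L+1$ and no path on $L$ vertices has fewer than $(L+1)^{L}$ vertices. So every subtree on at least $M \defeq (L+1)^L$ vertices contains a path gadget or a star gadget.

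I would first cut $T$ into vertex-disjoint subtrees each of size at least $M$, covering all but $O(M \cdot \#\text{pieces})$ vertices. This uses the standard greedy procedure of cutting off, from a deepest vertex, the first subtree reaching size $\geq M$. I would then extract one gadget from each piece. If the piece sizes were bounded by $CM$, this would yield at least $cn/(CM)$ gadgets, each contributing $\geq L$ vertices to $\mc{A}$, so $\eps = cL/(CM)$ would work.

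The main obstacle is exactly this size control. Greedy cutting only bounds a piece by $M\cdot(\text{degree})$, and high-degree vertices can create huge pieces. My first remedy is to treat any vertex of degree $\geq L$ in $T$ as its own star gadget before cutting: take disjoint stars greedily, assigning each leaf at most once. After removing these stars, the remaining forest has degrees $< L$, so greedy cutting gives pieces of size in $[M, LM]$.

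One must then check that the vertices lost to components of size $< M$ in the leftover forest are not almost everything. If they are, then almost all of $T$'s vertices are within bounded distance of star centres. In that case a fraction $\gtrsim 1/M$ of vertices are star leaves, which already gives $|\mc{A}| \geq \eps n$. Balancing these two cases gives a fixed $\eps(\eta) > 0$, which completes the plan.
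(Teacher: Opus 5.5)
Your local gadgets, the use of Proposition \ref{balancedload2}(1) to certify loads, and the K\"onig-type bound are all fine. The final balancing step, however, has a genuine gap. The inference ``almost everything is lost in small leftover components, so almost every vertex is within bounded distance of a star centre, so a fraction $\gtrsim 1/M$ of vertices are star leaves'' fails when degrees are unbounded. A star leaf can itself have huge degree, and removing it shatters $T$ into many tiny components while contributing only one vertex to $\mathcal{A}$. For instance, build $T$ from copies of a vertex $c$ adjacent to $\ell_1,\dots,\ell_L$, where each $\ell_i$ carries $D$ pendant vertices, consecutive copies are joined through pendant vertices, and $D=D_n\to\infty$. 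Your greedy leaves the processing order unspecified, so it may process the $c$'s first. Each $c$ then becomes a centre with leaves $\ell_1,\dots,\ell_L$, and every component of the leftover forest has size at most $2$, so everything is lost. Meanwhile the star leaves form a fraction about $1/D=o(1)$ of the vertices, giving $|\mathcal{A}|=o(n)$. What you actually have to control is the number of edges from the removed set (centres and leaves) into the leftover forest, because that bounds the number of leftover components. You need it to be $O(L)$ times the number of star leaves, and whether it is depends on how the stars are chosen.

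The paper handles this with a dichotomy on $\sum_{v\in V^+}\deg v$, where $V^+$ is the set of vertices of degree at least $M$.
\begin{itemize}
\item If this sum is at least $n/M$, the paper roots the tree and takes as centres only the high-degree vertices of one depth parity, permuting their children. Centres then stay fixed, and the number of deranged leaves is comparable to the degree sum.
\item If the sum is below $n/M$, deleting $V^+$ creates at most $n/M+1$ components, so the bounded-degree forest $K[V^-]$ has most of its vertices in large components. The paper then swaps isomorphic bounded-size pieces. Your cyclic path shift would work equally well there and avoids the pigeonhole over isomorphism classes.
\end{itemize}
Alternatively, you can keep your own architecture by fixing the greedy. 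Root $T$, process vertices deepest first, and make a vertex a centre exactly when it has at least $L$ non-centre children, taking those children as its leaves. Then centres are never leaves, and every vertex of the leftover forest has degree at most $L$ in it. Each centre sends at most one edge (to its parent) into the leftover forest, and each leaf sends fewer than $L$. So the leftover forest has at most $L\cdot\#\{\text{leaves}\}+1$ components, and your two cases balance as intended. Finally, a small slip: in the path gadget, $v_1$ is deranged but in general isolated in $\sigma(K)\cap K$ (for example whenever $K$ is a tree). So only $v_2,\dots,v_m$ can be placed in $\mathcal{A}$.
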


If $K$ is a very large tree, then the proposition is in essence stating that we can choose $\sigma \in \mc{D}_n$ such that $K \land_\sigma K$ contains a very dense forest as a subgraph. This should not be surprising; one may for example imagine a greedy algorithm, which iteratively constructs $\sigma(v)$ for $v \in [n]$ by sending it to a neighbour of $\sigma(u)$ where $u$ is a neighbour of $v$, and such an algorithm would likely not do too badly. We will however note that in the proof (which we defer to the appendix), we use a slightly different approach to construct such a $\sigma$.

\section{Proof of Theorem \ref{ersov}} \label{proofersov}
In this section, we prove Theorem \ref{ersov}. We will proceed by bounding $SOV(K)$ on both sides.

\subsection{$SOV(K) \leq \frac{1}{2 \alpha - 1}$}
We will assume that $\alpha < 1$; the case $\alpha=1$ follows by taking limits. 

Our main ingredient will be the following lemma.

\begin{lemma} \label{edgebounds}
    Let $\eps, \eta > 0$; then, the following holds with high probability. For any $\sigma \in \Sym_n$, for any $X \subseteq \Der \sigma$ with size at least $\eps n$, 

    \begin{equation}
        \left|(K \land_\sigma K)\left[X \leftrightarrow (X \sqcup \Fix \sigma)\right]\right| \leq \left( \frac{1}{2 \alpha -1} + \eta \right) |X|.
    \end{equation}
    ie. the number of edges with one endpoint in $X$ and the other endpoint within $X \cup \Fix \sigma$ is bounded by $\left( \frac{1}{2 \alpha -1} + \eta \right) |X|$.
\end{lemma}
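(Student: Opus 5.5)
We plan a first-moment argument with a union bound, and we treat $\alpha<1$ with $K\sim\mc{G}(n,p)$, $p=\lambda n^{-\alpha}$. Fix $c>\frac{1}{2\alpha-1}$. It suffices to show that with high probability there is no triple $(\sigma,X,S)$ such that $X\subseteq\Der\sigma$, $|X|=x\geq\eps n$, and $S$ is a set of $m=\lceil cx\rceil$ edges, each with one endpoint in $X$ and the other in $X\sqcup\Fix\sigma$, satisfying $S\subseteq K$ and $\sigma^{-1}(S)\subseteq K$. The first observation is that the event depends on $\sigma$ only through $\sigma^{-1}|_X$ and the set $\Fix\sigma$. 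For an edge $\{u,v\}$ with $u\in X$ and $v\in\Fix\sigma$, we have $\sigma^{-1}\{u,v\}=\{\sigma^{-1}u,v\}$. So the union bound over $\sigma$ costs at most $n^{x}\cdot 2^n$ rather than $n!$, and the union bound over $X$ costs another $2^n$.

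Given $(\sigma|_X,\Fix\sigma,X)$, we bound the probability that some admissible $S$ of size $m$ satisfies $T:=S\cup\sigma^{-1}(S)\subseteq K$. This probability is at most $\sum_S p^{|T|}$. In the generic case $|T|=2m$, there are at most $\binom{xn}{m}\le(en/c)^{m}$ choices of $S$. The resulting contribution is
\[
4^n\, n^{x}\,(en/c)^{cx}\,\lambda^{2cx}n^{-2\alpha cx}=\exp\big(x\log n\,(1+c-2\alpha c)+O(n)\big).
\]
Since $c(2\alpha-1)>1$ and $x\geq\eps n$, this is $o(1)$ after summing over $x$. This calculation is exactly where the threshold $\frac{1}{2\alpha-1}$ comes from, and it shows the slack $\eta$ can absorb the $O(n)$ terms.

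The real work is controlling the coincidences $S\cap\sigma^{-1}(S)\neq\emptyset$. To handle them, we decompose $S$ along the orbits of the edge-permutation induced by $\sigma$. In each such orbit, $S$ splits into maximal runs $e,\sigma^{-1}e,\dots,\sigma^{-(k-1)}e$, and each run of $k$ edges contributes $k+1$ edges to $T$ unless the run is an entire orbit. A run is determined by its first edge, which gives at most $xn$ choices, together with its length. Its cost is therefore $n\cdot p^{k+1}=n^{1-\alpha(k+1)+o(1)}$, compared with $n^{k(1-2\alpha)}$ for $k$ separate edges. Since $1-\alpha\le k(1-\alpha)$, we have $1-\alpha(k+1)\le k(1-2\alpha)$, so long runs only help and the generic estimate survives, up to a $2^{O(m)}$ factor for the choice of run lengths. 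Edges $\{u,\sigma u\}$ coming from $2$-cycles of $\sigma$ inside $X$ number at most $x/2$, and we discard them via the slack $\eta$ after shrinking $c$.

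We expect the main obstacle to be the full orbits, and above all the short ones of length $2$. These arise when $u\in X$ lies in a $2$-cycle $(u\,u')$ of $\sigma$ and $v\in\Fix\sigma$ is a common neighbour of $u$ and $u'$. Such an orbit gives $2$ edges of $S$ at cost only about $n\,p^{2}$, which is cheaper per edge than the generic cost. For these we would not use the union bound over $\sigma$. Instead, we would bound their number by a deterministic whp property of $K$ itself: the number of paths $u-v-u'$ in $K$ with $u,u'$ matched in a fixed involution is controlled by the total number of cherries weighted by their multiplicity, and more robustly by the fact that whp every vertex subset $W$ of $K$ spans at most $(\frac{1}{1-\alpha}+o(1))|W|$ edges when $|W|\le n^{1-o(1)}$. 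Longer full orbits of length $L\geq3$ cost $n\,p^{L}$ for $L$ edges, which is at most the generic per-edge cost once $L\geq3$ and $\alpha>\frac12$. If the full-orbit edges cannot be made $o(x)$ in this way, the fallback is to show that they contribute at most $\eta x$ edges using the local sparsity of $K$, for instance that whp the number of common neighbours of any pair of vertices is $O(1)$. Combining the three cases and letting $c\downarrow\frac{1}{2\alpha-1}$ yields the lemma.
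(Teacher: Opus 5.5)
\textbf{There is a genuine gap in your handling of closed edge-orbits.} Your generic first-moment estimate and your treatment of open runs (chains) are fine, and you rightly single out closed orbits as the crux. What you propose for them does not work, for three reasons.

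\emph{The danger is not confined to length $2$.} A closed orbit of length $k$ puts $k$ edges into $K\land_\sigma K$; an example is a $k$-cycle of $\sigma$ inside $X$ together with a common neighbour $v\in\Fix\sigma$ of all its vertices. It costs about $n\,p^{k}=n^{1-\alpha k+o(1)}$, whereas $k$ generic edges cost $n^{k(1-2\alpha)}$. The former is larger exactly when $k(1-\alpha)<1$. So your claim that closed orbits of length $\geq 3$ are harmless fails as soon as $\alpha>\frac{2}{3}$, and every $k<\frac{1}{1-\alpha}$ needs special treatment. This is the origin of the paper's cutoff $L=\lfloor\frac{1}{1-\alpha}\rfloor$.

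\emph{These edges cannot be made $o(|X|)$ or at most $\eta|X|$ by any sparsity property of $K$.} Choose a set $F$ of about $\eps n^{\alpha}/\lambda$ vertices to be fixed points, pair up the neighbours of each $v\in F$, and let $\sigma$ swap each pair. This gives $|X|\approx\eps n$ and roughly $|X|$ edges of $(K\land_\sigma K)[X\leftrightarrow\Fix\sigma]$, all lying in closed $2$-orbits. Once the entropy of $\sigma$ is counted correctly (see below), closed $2$-orbits are, like generic edges, exactly critical at $\frac{1}{2\alpha-1}$. Hence bounding them separately and adding the generic bound can never return the sharp constant; ``$O(1)$ common neighbours'' only gives $O(|X|)$ with a non-sharp constant. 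The subset-density bound you quote is also false as stated: for $|W|\geq n^{\alpha+\delta}$, $K[W]$ has average degree $n^{\Omega(1)}$. In any case it says nothing about sets of size $\eps n$.

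\emph{The edges $\{u,\sigma u\}$ cannot be absorbed into $\eta$.} They can number $|X|/2$: let $\sigma$ swap the endpoints of a matching of $K$. They must stay in the count, where they are harmless because each costs $p$ with $O(1)$ choices.

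\textbf{The missing idea is to make the union bound over $\sigma$ depend on its cycle type}, instead of paying $n^{|X|}$ uniformly. Let $X_k$ be the union of the $k$-cycles of $\sigma$ lying in $X$. There are at most $|X_k|!/(|X_k|/k)!\le e^{(1-\frac{1}{k})|X_k|\log n+O(n)}$ choices for $\sigma$ on $X_k$. This entropy saving exactly compensates for the cheaper closed orbits: the exponent for closed $k$-orbits becomes $\bigl[(1-\frac{1}{k})|X_k|-(\alpha-\frac{1}{k})e_k\bigr]\log n$, with threshold $\frac{1-1/k}{\alpha-1/k}\le\frac{1}{2\alpha-1}$ (equality at $k=2$). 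Your crude count only yields $\frac{2}{2\alpha-1}$ in the all-$2$-cycles case.

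Even with this refinement, a single first moment over all of $\sigma|_X$ is too lossy when the edges sit on short cycles while the long cycles and chains of $\sigma$ inside $X$ carry none. You must also union bound only over $\sigma|_{X_2\cup\dots\cup X_k}$ for the partial events $\{e_2\geq\rho_2 n,\dots,e_k\geq\rho_k n\}$, and combine the resulting partial exponents with weights $(\alpha-\frac{1}{i})^{-1}$. This is what the paper does through the orbit-wise moment generating function bounds of Lemma~\ref{tailboundsfixedsigma} and the summation by parts in Lemma~\ref{whatastatement}.
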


Let us first explain why this lemma implies $SOV(K) \leq \frac{1}{2 \alpha - 1}$. Indeed, for $\eta > 0$, and for any $\sigma \in \Sym_n$, let us now set 

\begin{equation}
    V_{\geq (\frac{1}{2\alpha - 1} + 2\eta)} = V_{\geq (\frac{1}{2\alpha - 1} + 2\eta)}(K \land_\sigma K); \qquad X = X(\sigma) = \Der \sigma \cap \left( V_{\geq (\frac{1}{2\alpha - 1} + 2\eta)} \right)
\end{equation}

By Proposition \ref{balancedload1},

\begin{equation}
\begin{split}
    \left|(K \land_\sigma K)\left[X \leftrightarrow (X \sqcup \Fix \sigma)\right]\right| & \geq \left|(K \land_\sigma K)\left[X \leftrightarrow \left(V_{\geq (\frac{1}{2\alpha - 1} + 2\eta)}\right)\right]\right| \\
    & \geq \sum_{v \in X} w_{K \land_\sigma K}(v)  \\
    & \geq \left( \frac{1}{2 \alpha -1} + 2\eta \right) |X|.
    \end{split}
\end{equation}

   By Lemma \ref{edgebounds}, this implies that, for any $\eps > 0$, the event $\{ \forall \sigma \in \Sym_n, |X(\sigma)| \leq \eps n \} $ holds with high probability. Thus, $SOV(K) \leq \frac{1}{2\alpha -1} + 2 \eta$ and $SOV(K) \leq \frac{1}{2\alpha -1}$.

We are now tasked with proving Lemma \ref{edgebounds}.

\begin{proof}[Proof of Lemma \ref{edgebounds}]

We begin with the following definition.

\begin{definition}
    Let $\sigma \in \Sym_n$, and $X \subseteq \Der \sigma$. Define
    \begin{equation}
        E^{(X, \sigma)} \defeq E[X \leftrightarrow (X \cup \Fix \sigma)] = \{e \in E \text{ with one endpoint in } X \text{ and the other in } X \cup \Fix \sigma \}.
    \end{equation}
    Then, if $e = \{u, v\} \in E^{(X, \sigma)}$, exactly one of the following holds.

    \begin{enumerate}
        \item For some $k \in \N$, $\sigma^k(e) \notin E^{(X, \sigma)}$. If this is the case, there exist maximal $i,j \geq 0$ such that $\{\sigma^{-i}(e), \sigma^{-(i-1)}(e) \ldots, e, \ldots, \sigma^j(e) \} \subseteq E^{(X, \sigma)}$; such a maximal subset is called a chain. \\
        \item The $\sigma$-orbit $\mc{O}$ of $e$ is contained within $E^{(X, \sigma)}$, i.e. the $\sigma$-orbits $\mc{O}_u$ (resp. $\mc{O}_v$) of $u$ (resp. $v$) are contained within $X \cup \Fix \sigma$. In this case:
        
        \begin{enumerate}
            \item either $|\mc{O}| = \mathrm{lcm}(|\mc{O}_u|, |\mc{O}_v|)$;
            \item or $\mc{O}$ is a so-called "special" orbit. 
        \end{enumerate}
        There are at most $n$ edges inside special orbits.
    \end{enumerate}
\end{definition}

The fact that this holds follows from some elementary graph theory; we refer to \cite[Proposition 5.1]{wuxuyu2023} for an explanation.

    Set $L = \lfloor \frac{1}{1 - \alpha} \rfloor$. If $\sigma \in \Sym_n$ and $X \subseteq \Der \sigma$, our job is to control the number of edges of $K \land_\sigma K$ which are contained inside $E^{(X, \sigma)}$. To this end, we will partition $E^{(X, \sigma)}$ into:
\begin{itemize}
    \item the subset $E_s$ containing the edges from special cycles;
    \item the subsets $E_k$ containing the edges from non-special cycles of length $k$, for $2 \leq k \leq L$;
    \item the subset $E_{>L}$ containing the edges from non-special cycles of length strictly greater than $L$;
    \item the subset $E_c$ containing the edges from chains.
\end{itemize}

(Note that, by construction, there are no non-special cycles of length $1$ inside $E^{(X, \sigma)}$.) 

We will proceed by controlling $e_s(\sigma, X) = |(K \land_\sigma K) \cap E_s|$, $e_k(\sigma, X) = |(K \land_\sigma K) \cap E_k|$, etc. In order to do this, we rely upon the following lemma, which is adapted from \cite{dingdu2023}; its proof is deferred to the appendix.

\begin{lemma}\label{tailboundsfixedsigma}
     Let $\rho \geq 0$; then, we have the following inequalities. For any $\sigma \in \mc{S}_n$, $X \in \Der \sigma$, 

    \begin{itemize}
        \item if $2 \leq k \leq L$, $\Proba(e_k(\sigma, X) \geq \rho n) \leq \exp\left(-(\alpha - \frac{1}{k}) \rho n \log n + o(n \log n)\right)$;
        \item $\Proba(e_{>L}(\sigma, X) \geq \rho n) \leq  \exp\left(-(2 \alpha - 1) \rho n \log n + o(n \log n)\right)$;
        \item $\Proba(e_s(\sigma, X) \geq \rho n) \leq \exp\left(-\alpha\rho n \log n + o(n \log n)\right)$;
        \item $\Proba(e_{c}(\sigma, X) \geq \rho n) \leq  \exp\left(-(2 \alpha - 1) \rho n \log n + o(n \log n)\right)$.
    \end{itemize}
\end{lemma}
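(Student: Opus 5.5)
We fix $\sigma$ and $X$ and, for each of the four edge families, write the relevant count as a sum of independent, or nearly independent, contributions coming from orbits or chains of $\sigma$ acting on edges. The model has $K\sim\mc{G}(n,p)$ with $p=\lambda n^{-\alpha}$, and an edge $e$ lies in $K\land_\sigma K$ precisely when $\sigma^{-1}(e)\in K$ and $e\in K$. The natural approach is the exponential moment method (Chernoff): for $\theta>0$ we bound
\begin{equation*}
\Proba(e_\bullet(\sigma,X)\geq \rho n)\leq e^{-\theta\rho n}\,\E\bigl[e^{\theta e_\bullet(\sigma,X)}\bigr],
\end{equation*}
and then take $\theta = c\log n$ with the constant $c$ chosen to match the target exponent. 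Since edges in different $\sigma$-orbits are independent in $K$, the moment generating function factorises over orbits. It therefore suffices to bound the contribution of a single orbit, or of a single chain, which is the reduction used in \cite{dingdu2023,wuxuyu2023}.

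\textbf{Cycles of length $k\le L$.} An orbit of length $k$ contributes $\sum_{j}\1_{\sigma^{j}(e)\in K}\1_{\sigma^{j+1}(e)\in K}$, a cyclic count over $k$ independent $\mathrm{Ber}(p)$ variables. Getting $m$ overlapping edges out of such an orbit forces at least $m$ edges present when $m<k$, and exactly $k$ present when $m=k$; in general the number of present edges is at least $m+\lceil m/k\rceil$-type bounds, which is \emph{at least} $m(1+1/k)/2$. The per-edge cost in $\log n$ is then at least $\alpha$ for each present edge, and each present edge contributes to at most two overlapped pairs. The extremal configuration is the full cycle: $k$ present edges buy $k$ overlaps at probability $p^k$, while the number of choices of orbit is about $n^{\,\cdot}$ per vertex orbit. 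This trade-off yields the exponent $(\alpha-\tfrac1k)$: $k$ overlaps cost $k\alpha\log n$, and we pay the entropy of choosing the orbit, roughly $\log n$ per cycle of $\sigma$ on vertices, that is $\tfrac1k$ per edge. I would carry this out by computing $\E[e^{\theta\,\cdot}]$ per orbit, namely $1+O(p^{k}e^{\theta k})+\sum_{m<k}O(p^{m+1}e^{\theta m})$, and summing the logarithms over the at most $\binom{n}{2}/k$ orbits.

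\textbf{Long cycles and chains.} For these the relevant path structure is a chain $e_0,\dots,e_\ell$ in which overlaps occur only on consecutive present pairs. Here $m$ overlaps need at least $m+1$ present edges within a single chain, but the total over all chains is controlled by $p^{(m+r)}$, where $r$ is the number of runs. The worst case is that each run costs $2\alpha-1$ per overlap after accounting for the $n$ choices of starting point. For long cycles, $k>L$ gives $\alpha-1/k> 2\alpha-1$ only when $k(1-\alpha)>1$, which matches the definition of $L$, so both families reduce to the chain bound $(2\alpha-1)$. \textbf{Special orbits.} These contain at most $n$ edges in total, and every overlapped edge there needs its own present edge, giving the bound $p^{\rho n}\binom{n}{\rho n}$ and hence the exponent $\alpha$.

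\textbf{Main obstacle.} The main obstacle is that the exponents must hold \emph{uniformly} with only $o(n\log n)$ slack, because they are later union-bounded over $n!\cdot 2^n=e^{n\log n+O(n)}$ pairs $(\sigma,X)$. The delicate step is the chain/long-cycle bound, where overlapping indicator pairs are dependent. I would first try the transfer-matrix computation of the MGF of $\sum \1_{e_j\in K}\1_{e_{j+1}\in K}$ along a path. The largest eigenvalue is $1+O(p^2e^{\theta})+O(p^3e^{2\theta})+\dots$, which with $e^\theta=n^{c}$ is $1+n^{-(2\alpha-c)}+\dots$; the total over the at most $n^2$ edges is then controlled once $c$ is chosen so that runs pay off. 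If that becomes messy, the fallback is a direct combinatorial enumeration of run-configurations along chains, bounding the number of ways by $\binom{n^2}{r}$ and optimising over the number of runs $r$.
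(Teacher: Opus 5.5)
Your plan is correct and is essentially the paper's proof: you use the exponential Markov inequality with $\theta=(\alpha-\tfrac1k)\log n$ for $k\le L$ and $\theta=(2\alpha-1)\log n$ for long cycles and chains, factorise over edge orbits and chains, and rely on the $2\times 2$ transfer-matrix eigenvalues $\mu_1\approx 1+p^2e^\theta$ and $\mu_2\approx pe^\theta$, with $k(1-\alpha)>1$ for $k>L$ making $\mu_2^k$ negligible. The one deviation is the special orbits: your direct bound $\binom{n}{\rho n}p^{\rho n}$ (from $e_s\le |K\cap E_s|$ and $|E_s|\le n$) replaces the paper's moment-generating-function bound with $\theta=\alpha\log n-\log\log n$ and is if anything simpler; also, in the short-cycle heuristic the $\log n$ entropy is paid per \emph{edge} orbit (choosing $\rho n/k$ among at most $n^2/k$ orbits), not per vertex cycle, but your actual per-orbit moment-generating-function computation does not depend on that remark.
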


Finally, we need to recombine the inequalities of Lemma \ref{tailboundsfixedsigma} into a global bound on the number of edges in $(K \land_\sigma K)[X \leftrightarrow (X \cup \Fix \sigma))]$. This is the object of the following sublemma, whose proof is again deferred to the appendix.

\begin{definition}
    Let $\sigma \in \Sym_n$ and $X \subseteq [n]$. The action of $\sigma$ on $X$ decomposes into orbits which are either $k$-cycles or chains. We define $\mc{C}_k(\sigma, X)$ to be the union of the $k$-cycles and $\mc{C}_c(\sigma, X)$ to be the union of the chains.
\end{definition}

\begin{lemma} \label{whatastatement}
    Let $\eta, \eps > 0$. Let $X \subseteq [n]$ with $|X| \geq \eps n$, and let $X_2, \ldots, X_L, X_{>L}, X_c$ form a partition of $X$. Denote $\kappa = \frac{|X|}{n}$. Let $\rho_2, \ldots, \rho_L, \rho_{>L}, \rho_{c}, \rho_s \geq 0$ be such that $\sum_{i} \rho_i \geq (\frac{1}{2 \alpha -1} + \eta) \kappa$. Then, there exists $\delta = \delta(\eta, \eps) > 0$ such that, for large enough $n$,

    \begin{equation}
        \Proba\left(\exists \sigma \in \Sym_n: X \subseteq \Der \sigma,\begin{cases}
            \mc{C}_2(\sigma, X) = X_2 \\
            \mc{C}_3(\sigma, X) = X_3 \\
            \hspace{29pt}\vdots \\
            \mc{C}_c(\sigma, X) = X_c
        \end{cases}, \begin{cases}
            e_2(\sigma, X) \geq \rho_2 n \\
            e_3(\sigma, X) \geq \rho_3 n \\
            \hspace{29pt}\vdots \\
            e_c(\sigma, X) \geq \rho_c n \\
            e_s(\sigma, X) \geq \rho_s n
        \end{cases} \right) \leq e^{-\delta n \log n}.
    \end{equation}
    
\end{lemma}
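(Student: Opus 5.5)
Fix the data $X$ and the partition $X_2,\ldots,X_L,X_{>L},X_c$, together with the thresholds $\rho_i$. The plan is a union bound over all $\sigma$ compatible with the prescribed cycle structure on $X$, combined with the fixed-$\sigma$ tail bounds of Lemma \ref{tailboundsfixedsigma}.

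The first step is to observe which random variables depend on which part of $\sigma$. Given the cycle-type constraint, $\sigma$ restricted to $X$ is determined by arranging each $X_k$ into $k$-cycles, $X_{>L}$ into long cycles, and $X_c$ into chains; the images of chain endpoints leave $X$. The edge sets $E_k$ depend only on $\sigma|_{X_k}$ (and on $\Fix\sigma$). The sets $E_c$, $E_{>L}$, $E_s$ depend on more of $\sigma$, in particular on $\Fix\sigma$ and on where chains exit.

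The second step is the counting. The number of admissible restrictions is at most:
\begin{itemize}
\item $|X_k|^{|X_k|(1-1/k)}$ choices for the $k$-cycle part, since there are roughly $m!/(k^{m/k}(m/k)!)$ such arrangements with $m=|X_k|$, i.e.\ $\exp((1-1/k)m\log n)$;
\item $\exp(|X_{>L}|\log n)$ for the long cycles;
\item $\exp(|X_c|\log n)$ for the chains;
\item $\exp(n\log 2)$ for the choice of $\Fix\sigma$, which is negligible at scale $n\log n$.
\end{itemize}
The edges of $E^{(X,\sigma)}$ only see $\sigma$ on $X\cup\Fix\sigma$ plus the one-step images of chain endpoints, so the remaining freedom of $\sigma$ is irrelevant. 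This is the point to check carefully: one must define the events so that they depend only on these restricted data.

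Union-bounding $\Proba(e_k\ge\rho_k n)\le \exp(-(\alpha-1/k)\rho_k n\log n+o(n\log n))$ against the $k$-cycle count, and similarly for the other types, the exponent of the total probability is $n\log n$ times
\begin{equation*}
\sum_{k\le L}\Bigl[(1-\tfrac1k)\kappa_k-(\alpha-\tfrac1k)\rho_k\Bigr]+\kappa_{>L}-(2\alpha-1)\rho_{>L}+\kappa_c-(2\alpha-1)\rho_c-\alpha\rho_s+o(1),
\end{equation*}
where $\kappa_\bullet=|X_\bullet|/n$. Since the $e$'s are not independent across types, we use $\Proba(\cap A_i)\le\min_i\Proba(A_i)$, or better a joint version of Lemma \ref{tailboundsfixedsigma}. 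The cleanest route is to rerun its Chernoff argument on the total: the edge sets are disjoint orbits, so the moment generating function factorizes, giving a joint bound with the sum of exponents.

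It remains to show the bracket is at most $-\delta$. The constraints available are the trivial bounds $\rho_k n\le e_k\le |E_k|$, which are useful, together with the per-type inequalities $(1-1/k)\le(\alpha-1/k)/(2\alpha-1)$, valid because $2\le k$; more precisely $(1-1/k)(2\alpha-1)\le\alpha-1/k$ iff $(1-\alpha)(k-2)\le$ something, and the choice $L=\lfloor 1/(1-\alpha)\rfloor$ is what makes this work for $k\le L$. With these, each type satisfies cost $\le$ gain as soon as $\rho_\bullet\ge\kappa_\bullet/(2\alpha-1)$. The condition $\sum\rho_i\ge(\frac1{2\alpha-1}+\eta)\kappa$ then forces a total deficit. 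I expect this to be the main obstacle: a naive type-by-type comparison fails when $\rho$ is concentrated on a type with a small $\kappa$ (e.g.\ $\rho_s$ with $\kappa$ small), so one must argue globally. Write the exponent as $-(2\alpha-1)\sum\rho_i+\sum\kappa_i$ plus nonpositive corrections, giving $\le -(2\alpha-1)\eta\kappa\le-(2\alpha-1)\eta\eps$. Fixed points add only $O(n)$, absorbed, yielding $\delta=(2\alpha-1)\eta\eps/2$.
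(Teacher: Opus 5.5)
Your argument breaks down at the final optimisation step. The claim that the exponent equals $\sum\kappa_\bullet-(2\alpha-1)\sum\rho_\bullet$ plus nonpositive corrections is false. For a cycle type $k\le L$ the correction is $-\kappa_k/k+\bigl(\tfrac1k-(1-\alpha)\bigr)\rho_k$, and $\tfrac1k\ge\tfrac1L\ge1-\alpha$ by the very choice of $L$, so this correction grows with $\rho_k$. Nothing in the lemma ties $\rho_k$ to $\kappa_k$, and $E_k$ may contain up to order $|X_k|\,n$ candidate edges (e.g.\ those joining $X_k$ to $\Fix\sigma$), so it cannot be absorbed.

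In fact no single union bound over all of $\sigma|_X$ can work. Take $\alpha=3/4$ (so $L=4$) and $X=X_2\sqcup X_{>L}$ with $|X_2|=\kappa n/100$. Put $\rho_2=(2+\eta)\kappa$ and all other $\rho$'s zero. Your exponent is then $\bigl(\tfrac1{200}+\tfrac{99}{100}-\tfrac{2+\eta}{4}\bigr)\kappa>0$. The rate $\alpha-\tfrac12$ for $e_2$ is essentially sharp for fixed $\sigma$, so it cannot pay for the $\exp(\Theta(\kappa_{>L}\,n\log n))$ arrangements of $X_{>L}$, which are irrelevant to $e_2$. Incidentally, your per-type inequality $(1-\tfrac1k)(2\alpha-1)\le\alpha-\tfrac1k$ is equivalent to $(1-\alpha)(k-2)\ge0$ and holds for every $k\ge2$; it is not where $L$ enters.

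Your remark $\Proba(\bigcap_i A_i)\le\min_i\Proba(A_i)$ points the right way, but it must be used before the union bound, to shrink the $\sigma$-data being enumerated. This is what the paper does.

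\begin{itemize}
\item An edge orbit of length $\mathrm{lcm}(a,b)\le k$ forces $a,b\le k$. Hence $e_2,\dots,e_k$ depend on $\sigma$ only through $\sigma|_{X_2\cup\dots\cup X_k}$ and $\Fix\sigma$, the latter costing only $2^n$. Note that $E_k$ does not depend on $\sigma|_{X_k}$ alone, as you state: edges between $X_2$ and $X_4$ lie in $E_4$.
\item Keep only the constraints on $e_2,\dots,e_k$ and union bound over that restriction. For each $k\le L$ this gives exponent $S_k=\sum_{i=2}^kT_i$, with $T_i=(1-\tfrac1i)\kappa_i-(\alpha-\tfrac1i)\rho_i$. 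Alongside this you have the full exponent $S_{tot}=\sum_{i\le L}T_i+T_{res}$, where $T_{res}=\kappa_{>L}+\kappa_c-(2\alpha-1)(\rho_{>L}+\rho_c+\rho_s)$.
\item Set $w_i=1/(\alpha-\tfrac1i)$ and $w_{res}=1/(2\alpha-1)$. This sequence is decreasing precisely because $L\le1/(1-\alpha)$.
\item Summation by parts then writes $\sum_{i\le L}w_iT_i+w_{res}T_{res}$ as the nonnegative combination $\sum_{i<L}(w_i-w_{i+1})S_i+(w_L-w_{res})S_L+w_{res}S_{tot}$, of total weight $w_2$.
\item Your per-type inequality gives $w_i(1-\tfrac1i)\le\tfrac1{2\alpha-1}$, so this combination is at most $\tfrac{\kappa}{2\alpha-1}-\sum\rho\le-\eta\kappa$.
\end{itemize}

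Hence $\min(S_2,\dots,S_L,S_{tot})\le-\eta\kappa/w_2\le-\eta\eps(\alpha-\tfrac12)$. This gives the lemma with, say, $\delta=\eta\eps(\alpha-\tfrac12)/2$.
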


By union-bounding over all $(X_i, \rho_i = \frac{k_i}{n})$ (there are only $\exp O(n)$ terms in said union bound), this implies Lemma \ref{edgebounds}. 

\end{proof}

\subsection{$SOV(K) \geq \frac{1}{2 \alpha - 1}$}
Again, we will assume that $\alpha < 1$. For $\alpha = 1$, the result follows either from Proposition \ref{starsnstripes} (if $\lambda > 1$) or by adapting the proof of Proposition \ref{starsnstripes} (if $\lambda \leq 1$).

 Recall that, for $t \geq 0$, given a correlated graph system $(G, H)$, we have defined

    \begin{equation}
        V_{\geq t}= V_{\geq t}(I) = \{ v \in [n], w_I(v) \geq t\} \text{ resp. } V_{\leq t} = V_{\leq t}(I) = \{ v \in [n], w_I(v) \leq t\} .
    \end{equation}

In this section, we will make use of the following theorem, proven in \cite{du2025}.

\begin{theorem}[Du, 2025] \label{duthm}
    Fix $\lambda_1, \lambda_2 > 0$, $\beta \in (0, 1)$. Let $G^\uparrow \sim \mc{G}(n, \frac{\lambda_1}{n^\beta})$ and $s \sim \lambda_2n^{-\left(\frac{1-\beta}{2}\right)}$, and consider a correlated inhomogeneous graph system $(G, H)$ obtained by subsampling $G^\uparrow$ with parameter $s$. In particular,

    \begin{equation}
        I \sim \mc{G}(n, \frac{\lambda_1 \lambda_2^2}{n}); \qquad U \sim \mc{G}(n, \frac{\lambda_1 \lambda_2 (2-o(1))}{n^{\frac{1+\beta}{2}}})
    \end{equation}

    Then, for any $\eps > 0$: 

    \begin{itemize}
        \item there exists an estimator recovering $\sigma^*$ on at least $|V_{\geq (\beta^{-1} + \eps)}|(1 - o_\Proba(1))$ vertices;
        \item any estimator must be wrong on at least $|V_{\leq (\beta^{-1} - \eps)}|(1 - o_\Proba(1))$ vertices.
    \end{itemize}
\end{theorem}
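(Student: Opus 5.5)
Since Theorem \ref{duthm} is quoted from \cite{du2025}, this is only a sketch of how I would reprove it with the tools of this paper. The two bullets are handled separately: the feasibility bullet reduces to results already established here, and the impossibility bullet needs a posterior-counting argument.

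\textbf{Feasibility.} I would apply Theorem \ref{possible} with $\rho = SOV(U)$. The union graph satisfies
\[
U \sim \mc{G}\bigl(n, c\, n^{-\alpha}\bigr), \qquad \alpha = \tfrac{1+\beta}{2} \in \bigl(\tfrac12, 1\bigr), \qquad 2\alpha - 1 = \beta .
\]
The constant $c = (2-o(1))\lambda_1\lambda_2$ varies with $n$, but $U$ can be sandwiched between two Erd\H{o}s--R\'enyi graphs with fixed constants. The first half of Section \ref{proofersov} (Lemma \ref{edgebounds} and the argument following it) then gives $SOV(U) \leq \frac{1}{2\alpha-1} = \beta^{-1}$; the monotonicity in Lemma \ref{monot} absorbs the sandwiching. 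That upper-bound half does not use Theorem \ref{duthm}, so there is no circularity. Since $V_{\geq(\beta^{-1}+\eps)} \subseteq V_{\geq(\rho+\eps)}$, Theorem \ref{possible} yields the first bullet.

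\textbf{Impossibility.} I would use a Bayesian argument. In the correlated Erd\H{o}s--R\'enyi model, $|G|$ and $|H|$ are fixed by the observation. Hence the posterior on $\pi$ is proportional to $\Lambda^{|G \land_\pi H|}$, where
\[
\Lambda = \frac{p_{11}\,p_{00}}{p_{10}\,p_{01}} = n^{\beta + o(1)}
\]
is the per-edge likelihood ratio. Let $Y = V_{\leq(\beta^{-1}-\eps)}(I)$. For every permutation $\tau$ of $Y$, set $\pi_\tau = \pi^* \circ \tau$. Composing with $\tau$ can only destroy edges of $I$ that have an endpoint in $Y$; the second inequality of Proposition \ref{balancedload1} bounds their number by $\sum_{u\in Y} w_I(u) \leq (\beta^{-1}-\eps)|Y|$. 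Spurious edges created by $\tau$ only increase the likelihood. Therefore each $\pi_\tau$ has posterior weight at least
\[
\Lambda^{|I| - (\beta^{-1}-\eps)|Y|} = \Lambda^{|I|}\, n^{-(1-\beta\eps+o(1))|Y|}.
\]
The family $\{\pi_\tau\}$ contains $|Y|! = n^{(1-o(1))|Y|}$ permutations, using $|Y| = \Theta(n)$. Its total posterior mass therefore exceeds that of $\pi^*$ by a factor $n^{(\beta\eps - o(1))|Y|}$. Among these permutations, the fraction agreeing with $\pi^*$ on more than $\delta|Y|$ vertices of $Y$ is at most $n^{-(\delta-o(1))|Y|}$.

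\textbf{Main obstacle.} The step above is the hardest part: turning this comparison into a statement about all of posterior space. The issue is not only to compare with the single permutation $\pi^*$. One must show that every permutation $\pi$ that is correct on a $\delta$-fraction of $Y$ has small posterior mass relative to the whole family. My plan is to bound, for each $\pi$, the number of $\pi' = \pi\circ\tau$ of comparable weight. I would then use a first-moment (planted-versus-null) computation showing that, whp, no permutation has $|G\land_\pi H|$ exceeding $|I| + o(n)$ in the relevant direction. This is where the concentration of Lemma \ref{tailboundsfixedsigma}-type tail bounds would be needed. Given that, the optimal (MAP/posterior-sampling) estimator errs on $(1-o_\Proba(1))|Y|$ vertices of $Y$, and hence so does any estimator.
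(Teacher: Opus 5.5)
Your feasibility half is sound and not circular. $U$ is Erd\H{o}s--R\'enyi with density $\asymp n^{-(1+\beta)/2}$, so the upper-bound half of Section \ref{proofersov} together with Lemma \ref{monot} gives $SOV(U)\le\beta^{-1}$, and Theorem \ref{possible} finishes; the paper itself just cites \cite{du2025}.

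The gap is in the impossibility half, exactly at the step you call the main obstacle. What you actually show is that the single point $\pi^*$ carries posterior mass at most $n^{-(\beta\eps-o(1))|Y|}$. Your count of $\pi_\tau$'s agreeing with $\pi^*$ on $\delta|Y|$ vertices counts permutations, not posterior mass. The weights of the $\pi_\tau$ are only bounded below and can differ by factors $n^{\Theta(n)}$. In any case the estimator's output $\pi_0$ is arbitrary, not $\pi^*$.

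Bounding the mass of $B(\pi_0,r)$ for $r<|Y|/n$ needs an argument like Lemma \ref{bayesian}, and condition 3 is where your family fails. Take $F(\pi)=\{\tau\circ\pi:\tau\in\Sym(Y(\pi))\}$. A single $\pi'$ (e.g.\ $\pi_0$ itself) lies in $F(\pi)$ for roughly $n^{rn}$ permutations $\pi$ of the ball. That cancels the gain $n^{\beta\eps|Y|}$ unless $rn\lesssim\beta\eps|Y|$. So this route only forces about $\beta\eps|Y|$ errors, not $(1-o_\Proba(1))|Y|$.

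The paper's proof of Theorem \ref{impossible} keeps the multiplicity at $e^{o(n\log n)}$ only because its moves are automorphisms of bounded-size tree components of $I(\pi)$, which cost nothing in likelihood. Here $Y$ in general also contains vertices of the giant component of $I$ with loads in $(1,\beta^{-1}-\eps]$, where every move costs likelihood. Controlling the posterior there is the real content of Du's argument, and your sketch does not supply it.

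The remedy you propose also fails as stated: it is not true that whp no permutation satisfies $|G\land_\pi H|\ge|I|+\Omega(n)$. Here is a construction:
\begin{itemize}
\item Let $D$ be a set of $\Theta(n)$ vertices isolated in $I$, and fix every vertex outside $D$.
\item Allow a move $v\mapsto v'$ for $v,v'\in D$ whenever some $u\notin D$ has $\{u,v\}\in\pi^*(G)$ and $\{u,v'\}\in H$.
\item This allowed-move graph on $D\times D$ has edge probability $\asymp n(\lambda_1n^{-\beta}s)^2\asymp n^{-\beta}$, hence degrees $\asymp n^{1-\beta}\to\infty$, so a greedy matching covers a constant fraction of $D$.
\item Extend the matching arbitrarily to a permutation $\sigma$ of $D$. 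This destroys no edge of $I$ and creates one new intersection edge $\{u,\sigma(v)\}$ per matched vertex.
\end{itemize}
Hence $|G\land_{\sigma\pi^*}H|\ge|I|+\Omega(n)$. Even if your bound did hold, a union bound over the ball would again only give errors of order $\beta\eps|Y|$.
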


We will compare this to Theorem \ref{possible} to show that, for well-chosen parameters $\lambda_1, \lambda_2$, and for any $\beta \in (0, 1)$,

\begin{equation}
    SOV(U) \geq \beta^{-1} = \frac{1}{2\frac{1+\beta}{2} - 1}.
\end{equation}

To this end, we make use of the following fact, which is proven in \cite{anantharam2016}.

\begin{proposition}
    Let $\kappa > 1$. Then, there exists $c(\kappa) > 1$ such that, if $J \sim \mc{G}(n, \frac{c(\kappa)}{n})$,

    \begin{equation}
        \rho_m(J) \defeq \max_{v \in [n]} w_J(v) \overset{(\Proba)}{\underset{n \to +\infty}{\longrightarrow}} \kappa.
    \end{equation}
    Furthermore, for any $\eps > 0$,
        $|V_{\geq (\rho_m - \eps)}(J)| = \Theta_\Proba(n).$
\end{proposition}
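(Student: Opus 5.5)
The proposition is essentially due to \cite{anantharam2016}, and the plan is to take from there only the convergence of the maximal density, deriving everything else from the tools of Section \ref{balancedload}. The input from \cite{anantharam2016} is this: for every fixed $c > 0$, if $J \sim \mc{G}(n, \frac{c}{n})$, then $\rho_m(J) = \max_{X} \frac{|J[X]|}{|X|}$ converges in probability to a deterministic limit $\rho(c)$. Here $\rho_m(J)$ is the maximal subgraph density, which equals $\max_v w_J(v)$ by Proposition \ref{balancedload2}(3). The limit $\rho(c)$ is given by a variational formula over the Poisson Galton--Watson local weak limit.

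\textbf{Step 1: choose $c(\kappa)$ by the intermediate value theorem.} I would show three properties of $c \mapsto \rho(c)$.
\begin{itemize}
\item It is continuous and nondecreasing on $(1, +\infty)$. Monotonicity follows from Proposition \ref{balancedload3} under the standard coupling of $\mc{G}(n, \frac{c}{n})$ and $\mc{G}(n, \frac{c'}{n})$. Continuity I would take from the explicit characterisation in \cite{anantharam2016}. Alternatively, the stability part of Proposition \ref{balancedload3} shows that for $c' = c + \delta$ the two coupled graphs differ in $O(\delta n)$ edges. That controls the loads outside a set of size $O(\sqrt{\delta} n)$; combined with Step 2 below (the maximiser has linear size), it should give continuity of $\rho$.
\item $\rho(c) \geq \frac{|J|}{n} \to \frac{c}{2}$, so $\rho(c) \to +\infty$ as $c \to +\infty$.
\item $\rho(c) \to 1$ as $c \downarrow 1$. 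The lower bound $\rho \geq 1$ holds because the $2$-core is nonempty and contains cycles. For the upper bound, near criticality the graph is close to a forest plus few cycles, and every subgraph with density at least $1 + \eta$ must be large and so is unlikely; this follows from the first-moment estimate in Step 2 together with the fact that the $2$-core has vanishing excess density as $c \downarrow 1$.
\end{itemize}
The intermediate value theorem then gives $c(\kappa) > 1$ with $\rho(c(\kappa)) = \kappa$.

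\textbf{Step 2: the linear-size statement.} First I prove a ``small sets are sparse'' lemma. For any fixed $\eta > 0$ there is $\delta > 0$ such that whp every $X \subseteq [n]$ with $|X| \leq \delta n$ satisfies $|J[X]| \leq (1 + \eta)|X|$. This is a first-moment bound:
\begin{equation*}
\sum_{k \leq \delta n} \binom{n}{k} \binom{\binom{k}{2}}{(1+\eta)k} \Big(\frac{c}{n}\Big)^{(1+\eta)k} \leq \sum_{k \leq \delta n} \Big( C \, \big(\tfrac{k}{n}\big)^{\eta} \Big)^{k},
\end{equation*}
which is $o(1)$ for $\delta$ small.

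Now fix $\eps > 0$ small enough that $\kappa - 2\eps > 1 + \eta$ for some $\eta > 0$, and set $t = \rho_m - \eps$. Whp $t \geq \kappa - 2\eps$. Proposition \ref{balancedload1} gives
\begin{equation*}
|J[V_{\geq t}]| = \sum_{u \in V_{\geq t}} w_J(u) \geq t |V_{\geq t}| > (1 + \eta)|V_{\geq t}|,
\end{equation*}
and $V_{\geq t}$ is nonempty since it contains the maximiser. By the lemma, $|V_{\geq t}| > \delta n$ whp. The upper bound $|V_{\geq t}| \leq n$ is trivial, so $|V_{\geq(\rho_m - \eps)}| = \Theta_\Proba(n)$. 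For larger $\eps$ the set only grows, so the claim holds for every $\eps > 0$.

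The main obstacle is Step 1, specifically continuity of $c \mapsto \rho(c)$ and its limit $1$ at $c = 1^+$. I would rely on the fixed-point characterisation from \cite{anantharam2016} rather than reprove it, with the stability argument above as a fallback route to continuity.
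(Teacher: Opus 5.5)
Your argument is correct, and it takes a more self-contained route than the paper, which gives no proof of its own: it attributes the whole statement to \cite{anantharam2016}, including both the existence of $c(\kappa)$ and the linear size of $V_{\geq(\rho_m-\eps)}(J)$.

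You borrow from that reference only the existence of a deterministic limit $\rho(c)$ of $\rho_m$. Everything else comes from the monotone coupling, Propositions~\ref{balancedload1}--\ref{balancedload3}, and one first-moment lemma. That lemma does triple duty.
\begin{itemize}
\item It gives the linear size of near-maximal load sets.
\item It gives continuity of $\rho$. Once a densest set $X'$ of the larger coupled graph $J'\supseteq J$ has $|X'|\geq\delta n$, you get $\rho_m(J')\leq\rho_m(J)+|J'\setminus J|/(\delta n)$ directly, so the $\sqrt{\eps}$-stability is not even needed.
\item It gives $\rho(1^+)=1$.
\end{itemize}
The citation is shorter and comes with an explicit description of the limit. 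Your route isolates exactly what is used. It also yields a $\delta(\eta)$ that is uniform for $c$ in compact sets, through the constant $C=e(ec/2)^{1+\eta}$, and this uniformity is what makes the continuity step work.

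Two steps need more precision.

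First, you only prove $\rho\geq 1$, so the linear-size argument, and hence continuity, is available only where $\rho>1$. Run the intermediate value argument at $c^*=\sup\{c>1:\rho(c)<\kappa\}$. For $c>c^*$ the maximiser is linear, which gives $\rho(c^*)\geq\kappa$. Then the maximiser at $c^*$ is linear, which gives $\rho(c^*)\leq\kappa$.

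Second, for $\rho(1+\tau)\to 1$ as $\tau\downarrow 0$, the density of the $2$-core tending to $1$ does not by itself bound the densities of its subsets. Argue instead as follows.
\begin{itemize}
\item A densest set of density $>1$ has minimum degree at least $2$, so it lies in the $2$-core.
\item The $2$-core has only $O(\tau^2 n)$ vertices. Equivalently, no set has larger excess $|E|-|V|$ than the $2$-core, whose excess is $O(\tau^3 n)$.
\item With $\delta(\eta)$ fixed uniformly for $c\in(1,2]$, this contradicts the first-moment lemma once $\tau$ is small.
\end{itemize}
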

Now, assume by contradiction that $SOV(K) < \dfrac{1}{2 \alpha -1}$. Fix $\kappa \in \left(SOV(K),  \dfrac{1}{2 \alpha -1} \right)$, and let

\begin{equation}
    \lambda_1 = \dfrac{\lambda^2}{4 c(\kappa)}, \qquad \lambda_2 = \dfrac{2 c(\kappa)}{\lambda}, \qquad \beta = 2 \alpha -1.
\end{equation}

so that, by monotonicity, $SOV(U) \leq SOV(K)$. Then, under the notations of Theorem \ref{duthm}, if $\eps > 0$ is small enough then with high probability,

\begin{equation}
    SOV(U) < \rho_m(I) - 2\eps < \kappa < \rho_m(I) + 2\eps <  \dfrac{1}{\beta}.
\end{equation}

 By Theorems \ref{possible} and \ref{duthm}, there exists an estimator $\hat{\sigma}$ which recovers $\sigma^*$ on a subset of size at least $|V_{\geq(\kappa - \eps)}| (1 - o_\Proba(1)) = \Theta_\Proba(n)$ but is wrong on any subset of size larger than $|V_{\leq(\kappa + \eps)}| (1 - o_\Proba(1)) = n(1 - o_\Proba(1))$. This is a contradiction, concluding the proof.

\section{Proof of Theorem \ref{impossible}} \label{proofimpossible}

In this section, we prove Theorem \ref{impossible}.

 We may assume, without loss of generality, that 
\begin{equation} \label{wlog}
    |G|, |H| \leq n \sqrt{\log n}; \qquad\forall e \in E,  \frac{1}{n (\log n)^r} \leq p_e \leq 1 - n^{-3}
\end{equation}

since this does not affect the high-probability behaviour of the model.

For any $\pi \in \Sym_n, \eps > 0$, we set 

    \begin{equation}
        I(\pi) = G \land_\pi H; \qquad V_{\leq(1-\eps)}(\pi) \defeq V_{\leq(1-\eps)}(I(\pi)) =  \{ v \in [n], w_{I(\pi)}(v) \leq 1 - \eps\}.
    \end{equation}

We will use the following lemma in order to establish the theorem.

\begin{lemma} \label{bayesian}
    Let $x \geq 0$. Assume that, for any $\eta > 0$ and $\pi_0 \in \Sym_n$, there exist $\gamma > 0$, a random subset $\Neg \subseteq \Sym_n$, and a function $F: B(\pi_0, x - \eta) \setminus \Neg \to \mc{P}(\Sym_n)$ verifying the following conditions.

    \begin{enumerate}
        \item if $\pi \in \Dom F$, $|F(\pi)| \geq e^{\gamma n \log n}$;
        \item if $\pi \in \Dom F$ and $\pi' \in F(\pi)$, $\Proba(\pi'| G, H, (p_e)) \geq e^{-o_{\Proba}(n \log n)} \Proba(\pi| G, H, (p_e))$;
        \item if $\pi' \in \Sym_n$, $|F^{-1}(\pi')| \leq e^{o_\Proba(n \log n)}$ (uniformly over $\pi'$);
        \item $\Proba(\pi^* \in \Neg) = o(1)$.
    \end{enumerate}
    Then, if $\hat{\pi}$ is an estimator of $\pi^*$, there exist at least $x(n - o_\Proba(n))$ vertices $v \in [n]$ such that $\hat{\pi}(v) \neq \pi^*(v)$.
\end{lemma}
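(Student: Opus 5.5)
We argue via the posterior distribution. Write $P(\pi) = \Proba(\pi^* = \pi \mid G, H, (p_e))$. Fix $\eta > 0$ and an estimator $\hat\pi$, and take $\pi_0 = \hat\pi$. Since $\hat\pi$ is a function of the observations, we may condition on them and apply the hypothesis with this $\pi_0$. The randomness of $\Neg$ is harmless: condition 4 is a statement about $\pi^*$ only. The claim is that, for every $\eta$, the event $\{\pi^* \in B(\hat\pi, x-\eta)\}$ has probability $o(1)$. This is exactly the statement that $ov(\hat\pi, \pi^*) \ge 1 - x + \eta$ fails whp, i.e. that $\hat\pi$ and $\pi^*$ disagree on at least $(x-\eta)n$ vertices whp. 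Letting $\eta \to 0$ slowly then gives the $x(n - o_\Proba(n))$ bound.

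The key step is a double-counting argument on the posterior mass. Let $D = B(\hat\pi, x-\eta) \setminus \Neg = \Dom F$. By condition 2, for every $\pi \in D$,
\[
P(\pi) \le e^{o_\Proba(n\log n)} \min_{\pi' \in F(\pi)} P(\pi') \le e^{o_\Proba(n\log n)} \frac{1}{|F(\pi)|} \sum_{\pi' \in F(\pi)} P(\pi').
\]
By condition 1, $|F(\pi)|^{-1} \le e^{-\gamma n\log n}$. Summing over $\pi \in D$ and exchanging the order of summation, each $\pi'$ is counted at most $|F^{-1}(\pi')| \le e^{o_\Proba(n\log n)}$ times by condition 3. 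Hence
\[
P(D) \le e^{-\gamma n \log n + o_\Proba(n\log n)} \sum_{\pi'} P(\pi') = e^{-\gamma n\log n + o_\Proba(n\log n)},
\]
which is $o_\Proba(1)$.

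We then conclude. Since $P(D) \le 1$ and $P(D) = o_\Proba(1)$, dominated convergence gives $\E[P(D)] = o(1)$. Moreover $\E[P(D)] = \Proba(\pi^* \in D)$, because $D$ depends only on the observations and on auxiliary randomness independent of $\pi^*$ given the observations. This last point needs care: if $\Neg$ depends on hidden quantities, we instead bound $\Proba(\pi^* \in B(\hat\pi, x-\eta)) \le \Proba(\pi^* \in D) + \Proba(\pi^* \in \Neg)$, and we define the posterior sum relative to the joint law, including $\Neg$. Combining this with condition 4 gives $\Proba(\pi^* \in B(\hat\pi, x-\eta)) = o(1)$.

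The main obstacle is handling the uniformity of the $o_\Proba(n\log n)$ terms. Conditions 2 and 3 must hold with one random error $c_n = o_\Proba(1)$ that is uniform over all $\pi, \pi'$, so that the exponent $-\gamma n\log n + c_n n\log n$ is negative on an event of probability $1 - o(1)$. On the complementary event we simply bound the probability by that of the event. A second point is that $\gamma$ may depend on $\eta$; this is fine, since the bound is applied for each fixed $\eta$ separately.
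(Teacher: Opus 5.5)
Your argument is essentially the paper's: the same double counting of posterior mass over $B(\pi_0,x-\eta)\setminus\Neg$ via conditions 1--3, with condition 4 absorbing $\Neg$. The only difference is that you plug in $\pi_0=\hat\pi$ directly, instead of bounding the posterior mass of every ball and invoking the standard decision-theoretic step. That substitution needs $\gamma$, $\Neg$ and the $o_\Proba$ terms to be uniform in $\pi_0$, which the paper likewise covers only by its remark that the argument is ``uniform in $\pi_0$''. In particular, condition 4 for each fixed $\pi_0$ does not by itself give $\Proba(\pi^*\in\Neg_{\hat\pi})=o(1)$, contrary to your remark that the randomness of $\Neg$ is harmless. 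This uniformity does hold in the paper's application, since $\Neg=\Neg_1\cup\Neg_2$ and $\gamma$ there do not depend on $\pi_0$.
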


This lemma is a straightforward adaptation of Lemma 1 from \cite{vassauxmassoulie2026}; for a more in-depth discussion of the ideas behind it, we refer the interested reader to said paper. We also include a short proof of the lemma in Appendix \ref{impossibleproofs}.

In order to verify the prerequisites of Lemma \ref{bayesian}, we will make use of two additional sublemmas, whose proofs are deferred to the appendix.

\begin{lemma} \label{ppost}
    Assume that (\ref{wlog}) holds. Then, there exists an (random) subset $\Neg_1 \subseteq \Sym_n$ with $\Proba(\pi^* \in \Neg_1) = o(1)$ such that, if $\pi \in \Sym_n \setminus \Neg_1$ and $\sigma$ is an automorphism of $I(\pi)$, 

    \begin{equation}
        \Proba(\sigma\pi | G, H, (p_e)) \geq  \Proba(\pi | G, H, (p_e))e^{-o_\Proba(n \log n)} 
    \end{equation}
    uniformly in $\sigma, \pi$.
\end{lemma}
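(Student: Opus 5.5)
We want to compare posterior probabilities of $\pi$ and $\sigma\pi$ where $\sigma \in \mathrm{Aut}(I(\pi))$. We will write the posterior explicitly. Since $\pi^*$ is uniform and independent of $(p_e)$, $\Proba(\pi | G, H, (p_e)) \propto \Proba(G, H | \pi, (p_e))$, and by independence across edges the likelihood factorises over pairs $e \in E$: each $e$ contributes a factor $f_e(\1_{e \in \pi(G)}, \1_{e \in H})$, with
\begin{equation}
f_e(1,1) = p_e s^2, \quad f_e(1,0) = f_e(0,1) = p_e s(1-s), \quad f_e(0,0) = 1 - p_e(2s - s^2).
\end{equation}
The graphs $\pi(G)$ and $\sigma\pi(G)$ differ only in how edges of $G$ are placed; the intersection $I(\pi)$ is mapped by $\sigma$ onto $\sigma(I(\pi)) = I(\pi)$, so the set of edges in $I(\sigma\pi)$ contains $\sigma(I(\pi)) \cap \ldots$; more precisely, the key fact is that $|I(\sigma \pi)| \geq |I(\pi)|$ is not needed. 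Instead, we bound the log-likelihood ratio $\log \Proba(G,H|\sigma\pi) - \log\Proba(G,H|\pi)$ edge by edge. Only edges in $\pi(G) \cup \sigma\pi(G) \cup H$ have factors different from $f_e(0,0)$, and there are at most $3 n\sqrt{\log n}$ of them by (\ref{wlog}).

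The plan is: write the ratio as $\prod_{e} f_e(\cdot)/\prod_e f_e(\cdot)$ and group by type. For the $(0,0)$ factors, each is in $[1-p_e(2s-s^2), 1]$; the product of all $(0,0)$ factors over all of $E$ differs between $\pi$ and $\sigma\pi$ only on $O(n\sqrt{\log n})$ edges, each contributing a factor at least $n^{-3}$ — too crude, so instead use $1 - p_e \geq n^{-3}$ only on edges with large $p_e$ and note that the product over differing $(0,0)$ edges is bounded by $\exp(-O(\sum p_e))$, which I would handle via weak inhomogeneity: edges with $p_e \geq \lambda_n/n$ contribute only $o(n)$ in expectation, and others contribute $O(\lambda_n \sqrt{\log n}) \cdot$ per-edge $O(\lambda_n/n)$, giving $e^{-o(n\log n)}$. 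For the non-$(0,0)$ factors, the crucial point is that edges in $I(\pi)$ (type $(1,1)$) are mapped by $\sigma$ to edges of $I(\pi)$, so the multiset of $(1,1)$ edges under $\sigma\pi$ contains $\sigma(I(\pi)) = I(\pi)$ (since $\sigma(I(\pi)) \subseteq \sigma\pi(G)\cap$ ... here I must check: $\sigma\pi(G) \supseteq \sigma(I(\pi)) = I(\pi)$ and $H \supseteq I(\pi)$, so yes $I(\pi) \subseteq I(\sigma\pi)$). Hence $(1,1)$ factors are identical or gained. The remaining $(1,0)$ and $(0,1)$ factors: edges of $H \setminus I(\pi)$ keep their $H$-membership, and edges of $\pi(G)\setminus I(\pi)$ are moved to $\sigma(\cdot)$. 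Comparing, the loss is $\prod p_{e}/p_{\sigma(e)}$ over at most $2n\sqrt{\log n}$ edges, plus ratios like $s^2 p_e / (s(1-s)p_e)$ whose count changes; each ratio is bounded below by $p_{\min} \geq (n(\log n)^r)^{-1}$ over $p_{\max}\le 1$.

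This is the main obstacle: a naive bound gives $\exp(-O(n\sqrt{\log n}\log n))$, which is not $o(n\log n)$. To fix it, define $\Neg_1$ as the set of $\pi$ for which $|\pi(G)|+|H| \geq Cn$ or where too many edges of $\pi(G) \cup H$ have $p_e \notin [\lambda_n^{-1}\!/n \cdot (\log n)^{-r}, \lambda_n/n]$ beyond $o(n)$; by Proposition \ref{weakchar} and the fact that $|G^\uparrow| = O_\Proba(n)$ under weak inhomogeneity, $\pi^* \notin \Neg_1$ whp. Then on the $O(n)$ relevant edges, ratios $p_e/p_{\sigma(e)}$ are within $(\log n)^{r}\lambda_n$ except on $o(n)$ edges where they are within $n(\log n)^r$, giving total $\exp(O(n\log\log n + n\log\lambda_n) + o(n)\log n) = e^{-o(n\log n)}$ for $\lambda_n$ growing slowly; the $s$-factors are handled similarly, since $s$ fixed or the count of changed types is $O(n)$ with per-factor $\log$ cost $O(\log(1/s))$, which I would assume is $o(\log n)$ (otherwise extra care via $|I|$ counts). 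Uniformity in $\sigma,\pi$ follows since all bounds depend only on $\pi\notin\Neg_1$.
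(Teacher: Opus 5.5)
Your argument has the same ingredients as the paper's proof:
\begin{itemize}
\item the explicit likelihood;
\item the inclusion $I(\pi)\subseteq I(\sigma\pi)$ for $\sigma\in\operatorname{Aut}(I(\pi))$;
\item a set $\Neg_1$ of permutations whose union graph $U(\pi)$ carries more than $o(n)$ edges with $p_e$ above $\lambda_n/n$ (excluded for $\pi^*$ by Proposition~\ref{weakchar}, since $U(\pi^*)\subseteq G^\uparrow$);
\item the floor $p_e\geq 1/(n(\log n)^r)$, which pays for the image edges $\sigma(e)$ that $\Neg_1$ cannot control. The lower end of your window is therefore superfluous.
\end{itemize}
The only difference is bookkeeping. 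The paper divides by the $\pi$-independent product $\prod_{e\in E}(1-p_e(2s-s^2))$, so the log-posterior becomes $\sum_{e\in U(\pi)} l_e+|\pi(G)\Delta H|\log(1-s)$ with $l_e=\log\frac{p_e}{1-p_e(2s-s^2)}$. Since $l_e=-\log n+O(\log\log n)$ off $o(n)$ edges, it simply compares $|U(\sigma\pi)|\leq|U(\pi)|$. You instead pair $e\mapsto\sigma(e)$ and treat the $(0,0)$-factors separately, which also works.

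As written, however, two steps do not go through.

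\textbf{The assumption on $s$.} You assume $\log(1/s)=o(\log n)$, but the model lets $s$ depend on $n$ (e.g.\ $s=n^{-1/4}$, or $1-s$ tiny), so the general case is left unproved. The assumption is unnecessary. The $s$-dependence of the likelihood is exactly $s^{|G|+|H|}(1-s)^{|\pi(G)\Delta H|}$. The first factor does not depend on $\pi$. In the second, $|\pi(G)\Delta H|=|G|+|H|-2|I(\pi)|$ cannot increase under $\pi\mapsto\sigma\pi$, precisely because $|I(\sigma\pi)|\geq|I(\pi)|$ --- the fact you first declared ``not needed.'' Edge by edge: when $\sigma(e)\in H\setminus I(\pi)$, two factors $p\,s(1-s)$ merge into one $p\,s^2$, which is a gain, never a loss.

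\textbf{The clause $|\pi(G)|+|H|\geq Cn$ in $\Neg_1$.} This clause does not depend on $\pi$. For fixed $C$ it is not excluded with high probability, since $|G^\uparrow|=O_\Proba(n)$ gives only tightness (take $p_e\equiv Z/n$ with $Z$ integrable but unbounded). Drop it. With $\lambda_n=\log n$, each paired ratio $p_{\sigma(e)}/p_e$ costs at most $\log\lambda_n+r\log\log n=O(\log\log n)$ in the exponent. There are at most $n\sqrt{\log n}$ such edges by the a priori bound on $|G|$, so the total is $o(n\log n)$. The $o(n)$ high-$p_e$ edges of $U(\pi)$ cost $O(\log n)$ each, both in the ratios and in the factors $1-p_e\geq n^{-3}$. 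So your ``main obstacle'' disappears without any $O(n)$ restriction on the edge count.
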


\begin{lemma} \label{negdef}
    Let $\delta, \eps > 0$. There exist constants $c = c_\eps \goesto{\eps \to 0} 0$, $C = C_\eps > 0$, $\gamma = \gamma_{\eps, \delta} > 0$ and a random subset $\Neg_2(\delta, \eps) \subseteq \Sym_n$, such that $\Proba(\pi^* \in \Neg_2) = o(1)$, and the following property holds.
    
    For any $\pi \notin \Neg_2$, $|V_{\leq(1 - \eps)}(\pi)| \geq x_{sparse}n (1 - c)$; furthermore, for any $A \subseteq V_{\leq(1 - \eps)}(\pi)$ with $|A| \geq \delta n$ and such that no edges of $I(\pi)$ link $A$ to $A^c$, the graph $I(\pi)[V_{\leq(1 - \eps)}(\pi)]$ has at least $\exp(\gamma n \log n)$ automorphisms $(\sigma_i)$, verifying $A^c \subseteq \Fix \sigma_i$.
\end{lemma}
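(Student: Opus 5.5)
The plan is to define $\Neg_2$ so that it only excludes the permutations for which the cardinality bound fails. The automorphism statement will then hold deterministically for every $\pi$, using the structure given by Proposition \ref{balancedload2}.

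\emph{Step 1 (the size bound).} Set $x(\eps) = \operatorname*{pliminf}_{n} |V_{\leq(1-\eps)}(I)|/n$.
\begin{itemize}
\item As $\eps$ decreases, $V_{\leq(1-\eps)}$ increases. So $x(\eps)$ is non-increasing in $\eps$, and $x(\eps)\uparrow x_{sparse}$ as $\eps \to 0$.
\item By the definition of $\operatorname*{pliminf}$, for every $y < x(\eps)$ we have $\Proba(|V_{\leq(1-\eps)}(I)| \leq yn) \to 0$.
\item If $x_{sparse}=0$ the bound is vacuous. Otherwise choose $\xi_\eps \downarrow 0$ and set $c_\eps = 1 - (x(\eps)-\xi_\eps)/x_{sparse}$, which tends to $0$.
\item Define $\Neg_2 = \{\pi : |V_{\leq(1-\eps)}(\pi)| < x_{sparse}n(1-c_\eps)\}$.
\end{itemize}
Since $I(\pi^*) = I$, we get $\Proba(\pi^* \in \Neg_2) = \Proba(|V_{\leq(1-\eps)}(I)| < (x(\eps)-\xi_\eps)n) = o(1)$.

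\emph{Step 2 (many automorphisms).} Fix any $\pi$ and any $A$ as in the statement.
\begin{itemize}
\item By Proposition \ref{balancedload2}(1), $I(\pi)[V_{\leq(1-\eps)}(\pi)]$ is a union of connected components of $I(\pi)$. Each is a tree on at most $1/\eps$ vertices.
\item Since no edge of $I(\pi)$ joins $A$ to $A^c$, $A$ is a union of such tree components. Each has at most $1/\eps$ vertices, so $A$ contains at least $\delta\eps n$ components.
\item The number $M = M_\eps$ of isomorphism classes of trees on at most $1/\eps$ vertices is a constant. By pigeonhole, some class has at least $m = \lceil \delta\eps n/M\rceil$ copies $T_1,\dots,T_m$ inside $A$.
\item Fix isomorphisms between these copies. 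Every permutation $\tau \in \Sym_m$ gives an automorphism $\sigma_\tau$ of $I(\pi)[V_{\leq(1-\eps)}(\pi)]$: it maps $T_i$ onto $T_{\tau(i)}$ via the chosen isomorphisms and fixes every other vertex.
\item Each $\sigma_\tau$ fixes $A^c$ pointwise, and distinct $\tau$ give distinct automorphisms.
\end{itemize}
This yields at least $m! = \exp\bigl((1+o(1)) m\log m\bigr) \geq \exp(\gamma n \log n)$ automorphisms for any $\gamma < \delta\eps/M_\eps$, once $n$ is large.

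\emph{Main obstacle.} There is no real probabilistic difficulty here. The only care needed is in Step 1: the sets $V_{\leq(1-\eps)}$ depend on $\eps$ and $x_{sparse}$ is defined as a limit, so the multiplicative loss $c_\eps$ must absorb both the gap $x_{sparse}-x(\eps)$ and the slack $\xi_\eps$. The constants are then $C_\eps = 1/\eps$ (the bound on component size) and $\gamma_{\eps,\delta} = \delta\eps/(2M_\eps)$.
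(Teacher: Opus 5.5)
Your proposal is correct and follows essentially the paper's route. The size bound comes from the definition of $x_{sparse}$ applied to $I = I(\pi^*)$, and the automorphisms come from Proposition \ref{balancedload2}(1): the at least $\delta\eps n$ tree components of $A$ fall into boundedly many isomorphism classes, so permuting the copies inside one large class gives $\exp(\gamma n\log n)$ automorphisms fixing $A^c$. Your observation that this automorphism count holds deterministically for every $\pi$ is a more explicit version of the paper's choice of $\Neg_2$ as the failure set of the property, since $\Neg_2$ then only needs to exclude failures of the size bound (and, to absorb your ``once $n$ is large'', all of $\Sym_n$ for the finitely many small $n$).
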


Now, let $\eta > 0$ and $\pi_0 \in \Sym_n$. We pick $\eps > 0$ such that $c_\eps \leq \frac{\eta}{2}$ (as defined in Lemma \ref{negdef}), and we set $\delta = \frac{\eta }{2}$. We then define $\Neg = \Neg_1 \cup \Neg_2(\delta, \eps)$; we are going to build a function $F: B(\pi_0, x_{sparse} - \eta) \setminus \Neg \to \mc{P}(\Sym_n)$ to satisfy the requirements of Lemma \ref{bayesian}.

Let $\pi \in B(\pi_0, x_{sparse} - \eta) \setminus \Neg$. We consider the set $X(\pi) = \{ v \in V_{\leq(1 - \eps)}(\pi), \pi^{-1}(v) = \pi_0^{-1}(v) \}$; by Lemma \ref{negdef}, $|X(\pi)| \geq (\eta - c_\eps ) n$. We then consider the set $\overline{X}(\pi)$, which is the minimal subset of $V_{\leq(1 - \eps)}(\pi)$ containing $X$ such that no edges of $I(\pi)$ link $\overline{X}$ and $\overline{X}^c$. Since $\pi \notin \Neg$, by Lemma \ref{negdef}, for some $\gamma > 0$ we may construct $\exp(\gamma n \log n)$ automorphisms $(\sigma_1, \ldots, \sigma_N)$ of the graph $I(\pi)$ which leave $\overline{X}^c$ fixed. The function $F$ is defined as mapping $\pi$ to the set $\{ \sigma_i \pi, 1 \leq i \leq N \}$.

    Does $F$ satisfy our conditions? Conditions 1 and 4 are verified by construction; condition 2 is verified by Lemma \ref{ppost}. Condition 3, however, is less obvious; let us prove that it holds.

Let $\pi' \in \Sym_n$. We may partition $F^{-1}(\pi')$ as

\begin{equation}
    \begin{split}
        F^{-1}(\pi') & \subseteq \bigcup_{\underset{J \text{ subgraph of } \left(U(\pi')[Z]\right)}{Y \subseteq Z \subseteq [n]}} \{ \pi \in B(\pi_0, x_{sparse} - \eta) \setminus \Neg: X(\pi) = Y, \overline{X}(\pi) = Z, I(\pi)[Z] = J \} \\
            & \defeq\bigcup_{\underset{J \text{ subgraph of } \left(U(\pi')[Z] \right)}{Y \subseteq Z \subseteq [n]}} W(Y, Z, J).
    \end{split}
\end{equation}

We will first note that the indexation set has size $\exp O(n \sqrt{\log n})$. Indeed, there are at most $4^n$ choices for $(Y, Z)$; and, given that $|G|, |H| \leq 2 n \sqrt{\log n}$, the graph $U(\pi')[Z]$ has at most $O_\Proba(n \sqrt{\log n})$ edges (uniformly in $\pi'$), meaning that there are at most $\exp O_\Proba(n \sqrt{\log n})$ choices for $J$. We thus just need to prove that $|W(Y, Z, J)| \leq \exp O_\Proba(n)$ for all choices of $Z, J$.

Now, let $\pi_1, \pi_2 \in W(Y, Z, J)$; and let $\sigma = \pi_2 \pi_1^{-1}$. By construction, $\sigma = \Id$ over $Z^c$. Furthermore, $\sigma|_Z$ is an automorphism of $J$, which sends each connected component to itself (since $\sigma|_{Y} = \Id$). Since each connected component has vertex count at most $\frac{1}{\eps}$, there are only $\exp O_\Proba(n)$ possible choices for such a $\sigma$, and thus $|W(Z, J)| \leq \exp O_\Proba(n)$. 

Thus, we have proven that $F$ verifies point $3$, concluding the proof of Theorem \ref{impossible}.

\begin{appendix}
\section{Proofs of properties for the balanced load function}

\begin{proof}[Proof of Proposition \ref{balancedload1}]
    Let $\theta$ be a balanced allocation on $K$. Then, by definition, if $u \in V_{\geq t}$ and $v \in (V \setminus V_{\geq t})$, $\theta(v \to u) = 0$. Thus, if $X \subseteq V_{\geq t}$,

    \begin{equation}
        \begin{split}
        \sum_{u \in X} w_K(u) & = \sum_{\underset{v \in V \setminus \{ u\}}{u \in X}} \theta(v \to u)\\
        & = \sum_{\underset{v \neq u \in V_{\geq t}}{u\in X}} \theta(v \to u) \\
        & \leq \sum_{\{u, v \} \in E(K_{X \leftrightarrow V_{\geq t}})} [\theta(u \to v) + \theta(v \to u)] \\
        & =  \sum_{\{u, v \} \in E(K_{X \leftrightarrow V_{\geq t}})} [1] =  |K[X \leftrightarrow V_{\geq t}]|.
        \end{split}
    \end{equation}
    On the other side,
    \begin{equation}
        \begin{split}
            \sum_{u \in Y} w_K(u) & = \sum_{\underset{v \in V \setminus \{ u\}}{u \in Y}} \theta(v \to u)  \\
             &\geq  \sum_{\underset{v \neq u \in (V_{\geq t} \cup Y)}{u \in Y}} \theta(v \to u) \\
             & = \sum_{\{u, v \} \in E(K_{Y \leftrightarrow (V_{\geq t}\cup Y)})} [\theta(u \to v) + \theta(v \to u)]\\
             &             = |K[Y \leftrightarrow (V_{\geq t}\cup Y)]|.
        \end{split}
    \end{equation}

    The last point follows by taking $X = V_{\geq t}$ in the first inequality and $Y = V_{\geq t}$ in the second inequality (so that $Y \subseteq V_{<t'}$ for some large $t'$).
\end{proof}

\begin{proof}[Proof of Proposition \ref{balancedload2}] In this proof, we fix a balanced load allocation $\theta$ for the graph $K$.

    \underline{Point 1.} Let $K'$ be a connected component of $K$ with vertex set $\mc{A}$, and assume that for some vertex $u$ in $K'$, $w_{K}(u) \leq 1-\eps$. Then, any neighbour $v$ of $u$ must also verify $w_K(v) \leq 1-\eps$; otherwise, we would have $\theta(u \to v) = 0$, $\theta(v \to u) = 1$ and $w_K(u) \geq 1$. By propagation, all vertices $v$ of $K'$ verify $w_K(v) \leq 1-\eps$; thus,

    \begin{equation}
        |K'| = \sum_{v \in \mc{A}}w_{K'}(v) \leq (1 - \eps)|\mc{A}|.
    \end{equation}
    Since $K'$ is connected, this is only possible if $K'$ is a tree and $|\mc{A}| \leq \frac{1}{\eps}$.

    \underline{Point 2.} Let $v \in V_{\geq k}$. By Proposition \ref{balancedload1}, 

    \begin{equation}
        |K[\{v\} \leftrightarrow V_{\geq k}]| \geq w_K(v) \geq k.
    \end{equation}
    Thus, $V_{\geq k}$ is contained within $\Ker_k(K)$.

    Now, let $V_{ker}$ be the vertex set of $\Ker_k(K)$ and $V_{ker}^- = \{ v \in V_{ker}: w_K(v) < \frac{k}{2} \}$. Again by Proposition \ref{balancedload1}, if $|V_{ker}^-| > 0$,
    \begin{equation}
        \frac{k}{2}|V_{ker}^-| > \sum_{u \in V_{ker}^-} w_K(u) \geq |K[V_{ker}^- \leftrightarrow V_{ker}]| \geq \frac{1}{2} k |V_{ker}^-|
    \end{equation}
    which is impossible. Thus, $|V_{ker}^-| = 0$.
    
    \underline{Point 3.}
    Let $\rho_m' = \max_{v \in V}w_K(v)$; we will show that $\rho_m = \rho_m'$.
    First of all, by Proposition \ref{balancedload1},

    \begin{equation} \label{tempbal2}
        \frac{|K[V_{\geq \rho'_m}]|}{|V_{\geq \rho'_m}|} \geq \frac{1}{|V_{\geq \rho'_m}|}\sum_{v \in V_{\geq \rho'_m}} w_K(v) = \rho'_{m}
    \end{equation}
    so $\rho_m \geq \rho_m'$.

    Furthermore, if $Y \subseteq V$ verifies $\dfrac{|K[Y]|}{|Y|} = \rho_m$, then 

    \begin{equation}
        |Y| \rho_m' \geq \sum_{v \in Y} w_K(v) \geq |K[Y]|
    \end{equation}
    and $\rho_m' \geq \rho_m$.
    The second statement then follows from (\ref{tempbal2}).

\end{proof}

\begin{proof}[Proof of Proposition \ref{balancedload3}]
    The first point is established in \cite[Lemma 2.2]{du2025}.

    For the second point, we will first assume that $G \subseteq H$. Then, by Proposition \ref{balancedload1}, 

\begin{equation}
    \E_{v \sim \Unif(V)}[w_H(v) - w_G(v)] \leq \eps
\end{equation}

and thus, by Markov's inequality, we can construct an $\mc{A}$ verifying $|\mc{A}| \leq \sqrt{\eps} n$ and, for $v \notin \mc{A}$, $|w_G(v) - w_H(v)| \leq \sqrt{\eps}$.

The general case follows by applying this to $(G, G \cup H)$ and $(H, G \cup H)$.
\end{proof}

\section{Proof of Proposition \ref{starsnstripes}}

\begin{proof}
Without loss of generality, we may assume that $K$ is a tree. Let $M > 0$ and define 
\begin{equation}
    V^+ = \{ v \in [n]: \deg v \geq M \}; \qquad V^- = [n] \setminus V^+.
\end{equation}

We will distinguish two cases.

\underline{Case 1:  $\sum_{v \in V^+} \deg v \geq \frac{n}{M}$}. 

Fix an arbitrary root $v \in [n]$, to make $K$ into a rooted tree. The vertices of $K$ thus all have either even depth or odd depth. We assume without loss of generality that

\begin{equation}
    \sum_{v \in V^+ \text{ of even depth}} \deg v \geq \frac{n}{2M}.
\end{equation}

Then, we may pick $\sigma \in \Sym_n$ such that:
\begin{itemize}
    \item if $v \in V^+$ is of even depth then $\sigma$ swaps around the children of $v$, leaving at most one fixed point;
    \item $\sigma$ fixes all other points.
\end{itemize}
Thus, if $\mc{A} = \{ v \in [n] : \text{ the mother of } v \text{ is in } V^+ \text{ and has even depth, and } v \in \Der \sigma \}$, we have

\begin{equation}
    |\mc{A}| \geq n \frac{M-2}{2 M^2} \qquad \text{and} \qquad\min_{v \in \mc{A}} w_{K \land_\sigma K}(v) \geq \frac{M - 1}{M}.
\end{equation}

\underline{Case 2:  $\sum_{v \in V^+} \deg v < \frac{n}{M}$}. 

In this case, $|K[V^-]| \geq \frac{M-1}{M}n $: thus, $K[V^-]$ is a forest and contains at least $\frac{n}{2}$ vertices in components consisting of more than $\frac{M}{2}$ vertices. We may then greedily subsample vertices, to form $V^{--} \subseteq V^-$ such that $K[V^{--}]$ is a forest, where:

\begin{itemize}
    \item all connected components of $K[V^{--}]$ have between $\frac{M}{2}$ and $M$ vertices;
    \item $|V^{--}| \geq \frac{n}{2M}$.
\end{itemize}

In particular, there is a finite number of isomorphism classes for the connected components of $K[V^{--}]$ to belong to (independent from $n$). Thus, if we remove a subset $I$ of size $O(1)$ from $V^{--}$, the resulting graph $K[V^{--} \setminus I]$ has an automorphism $\sigma$ with no fixed points. Extending $\sigma$ to $\Sym_n$ by fixing everything else, we thus obtain

\begin{equation}
    \min_{v \in V^{--} \setminus I} w_{K \land_\sigma K}(v) \geq \frac{\frac{M}{2} - 1}{\frac{M}{2}}.
\end{equation}

 Combining the two cases, we have thus proven that for arbitrary $M$, $SOV(K) \geq \frac{M-2}{M}$; thus, $SOV(K) \geq 1$.
\end{proof}

\section{Additional proofs for Theorem \ref{ersov}}
In this section, we prove Lemmas \ref{tailboundsfixedsigma} and \ref{whatastatement}.

\begin{proof}[Proof of Lemma \ref{tailboundsfixedsigma}]

Let $k \geq 1$ and $\sigma \in \Sym_n$, and fix a $\sigma$-orbit $O \subseteq E[X \leftrightarrow (X \cup \Fix \sigma)]$ of length $k$. Then, by \cite[Appendix A.1]{dingdu2023}, setting $e_O = |(K \land_\sigma K) \cap O|$, for any $\theta > 0$ we have

\begin{equation}
    \E[e^{\theta e_O}] = \mu_1^k + \mu_2^k
\end{equation}
where $(\mu_1, \mu_2)$ are the roots of the polynomial

\begin{equation}
    X^2 - (1 + \frac{\lambda(e^\theta - 1)}{n^\alpha}) X + (e^\theta - 1)(\frac{\lambda}{n^\alpha} - \frac{\lambda^2}{n^{2\alpha}}).
\end{equation}

If we pick $1 \ll e^\theta \ll n^\alpha$, we then have the following identities:

\begin{equation}
    \mu_1 = 1 + \frac{\lambda^2 e^\theta}{n^{2\alpha}} + o(\frac{e^\theta}{n^{2 \alpha}}); \hspace{10pt} \mu_2 = \frac{\lambda e^\theta}{n^\alpha}(1 + o(1)).
\end{equation}

Thus, if $\rho \geq 0$, we may pick $\theta = (\alpha - \frac{1}{k}) \log n$ and so, if $C_k(\sigma) = \frac{1}{k}|\mc{O}_k|$ is the number of orbits inside $\mc{O}_k$,

\begin{equation}
    \begin{split}
        \Proba(e_k(\sigma) \geq \rho n) & \leq e^{-\theta \rho n} \E[e^{\theta e_k(\sigma)}] \\
        & = e^{-\theta \rho n} \left( \mu_1^k + \mu_2^k \right)^{C_k(\sigma)} \\
        & \leq e^{-\theta \rho n} \left(1 + (k \lambda^2 + \lambda^k) n^{-1} + o(n^{-1}) \right)^{C_k(\sigma)} \\
        & = \exp\left(-(\alpha - \frac{1}{k}) \rho n \log n + O(n)\right)
    \end{split}
\end{equation}
uniformly in $k \leq L$, since $C_k(\sigma) \leq n^2$. Similarly, picking $\theta = (2\alpha - 1) \log n$, we obtain that

\begin{equation}
    \begin{split}
        \Proba(e_{>L}(\sigma) \geq \rho n)  & \leq e^{-\theta \rho n} \E[e^{\theta e_{>L}(\sigma)}] \\
        & = e^{-\theta \rho n} \prod_{k \geq L+1} (\mu_1^k +\mu_2^k)^{C_k(\sigma)} \\
        & \leq e^{-\theta \rho n} \prod_{k \geq L+1}(\mu_1^{L+1} +\mu_2^{L+1})^{\frac{k C_k(\sigma)}{L+1}} \\
        & \leq e^{-\theta \rho n} (\mu_1^{L+1} +\mu_2^{L+1})^{n^2} \\
        & = e^{-(2\alpha-1)\rho n\log n} (1 + O(n^{-1}))^{n^2} \\
        & = \exp\left(-(2 \alpha - 1) \rho n \log n + O(n)\right)
    \end{split}
\end{equation}

Furthermore, picking $\theta = \alpha \log n - \log \log n$, we obtain that

\begin{equation}
    \begin{split}
        \Proba(e_s(\sigma) \geq \rho n) & \leq e^{-\theta \rho n} \E[e^{\theta e_s(\sigma)}] \\
        & =  e^{-\theta \rho n} \prod_{O \subseteq \mc{O}_s} (\mu_1^{|O|} + \mu_2^{|O|}) \\
        & \leq e^{-\alpha \rho n \log n + O(n \log \log n)} (\mu_1 + \mu_2)^n \\
        & = \exp \left( - \alpha \rho n \log n + o(n \log n)\right).
    \end{split}
\end{equation}
since $|\mc{O}_s| \leq n$.

Finally, we need to handle the chains. If $C$ is a chain of length $k$, then, again by \cite[Appendix A.1]{dingdu2023}, for any $\theta$ such that $1 \ll e^\theta \ll n^\alpha$,

\begin{equation}
    \E[e^{\theta e_C}] \leq \mu_1^k + e^\theta \frac{\lambda^3}{n^{3 \alpha}}(1 + o(1)) \mu_2^k 
\end{equation}

so that, picking $\theta = (2 \alpha - 1) \log n$,

\begin{equation}
    \begin{split}
        \Proba(e_{c}(\sigma) \geq \rho n)  & \leq e^{-\theta \rho n} \E[e^{\theta e_{c}(\sigma)}] \\
        & = e^{-\theta \rho n} \prod_{C}  \E[e^{\theta e_{C}(\sigma)}] \\
        & = e^{-(2\alpha-1)\rho n\log n} (1 + O(n^{-1}))^{n^2} \\
        & = \exp\left(-(2 \alpha - 1) \rho n \log n + O(n)\right)
    \end{split}
\end{equation}

since the number of chains is at most $n^2$.

\end{proof}

\begin{proof}[Proof of Lemma \ref{whatastatement}]
    Set $\kappa_2 = \frac{|X_2|}{n}, \ldots, \kappa_c = \frac{|X_c|}{n}$. Then, for any $2 \leq k \leq L$, the values $e_2(\sigma, X), \ldots, e_k(\sigma, X)$ only depend on $\sigma$ via $((\sigma^{-1})|_{X_i})_{2 \leq i \leq k}$; and the complete set of values $e_2(\sigma, X), \ldots, e_L(\sigma, X), e_{>L}(\sigma, X), e_c(\sigma, X), e_s(\sigma, X)$ only depends on $\sigma$ via $(\sigma^{-1})|_X$. In particular, since the $(e_i(\sigma, X))$ are mutually independent, we may apply Lemma \ref{tailboundsfixedsigma} and obtain that

\begin{equation}
    \begin{split}
        \Proba &\left(\exists \sigma \in \Sym_n: X \subseteq \Der \sigma,\begin{cases}
            \mc{C}_2(\sigma, X) = X_2 \\
            \mc{C}_3(\sigma, X) = X_3 \\
            \hspace{29pt}\vdots \\
            \mc{C}_k(\sigma, X) = X_k
        \end{cases}, \begin{cases}
            e_2(\sigma, X) \geq \rho_2 n \\
            e_3(\sigma, X) \geq \rho_3 n \\
            \hspace{29pt}\vdots \\
            e_k(\sigma, X) \geq \rho_k n \\
        \end{cases} \right) \\
        & \leq \left(\prod_{i=2}^k \frac{|X_i|!}{\left(\frac{1}{i} |X_i|\right)!}\right) \exp \left( - (\alpha - \frac{1}{2}) \rho_2 n \log n- \ldots - (\alpha - \frac{1}{k}) \rho_k n\log n + o(n \log n)\right) \\
        & \leq \exp \left( \sum_{i=2}^k [(1 - \frac{1}{i}) \kappa_i - (\alpha - \frac{1}{i})\rho_i] n \log n + o(n \log n)\right)
    \end{split}
\end{equation}

    and, similarly, 

    \begin{equation}
        \begin{split}
            \Proba & \left(\exists \sigma \in \Sym_n:  X \subseteq \Der \sigma,\begin{cases}
            \mc{C}_2(\sigma, X) = X_2 \\
            \mc{C}_3(\sigma, X) = X_3 \\
            \hspace{29pt}\vdots \\
            \mc{C}_c(\sigma, X) = X_c
        \end{cases}, \begin{cases}
            e_2(\sigma, X) \geq \rho_2 n \\
            e_3(\sigma, X) \geq \rho_3 n \\
            \hspace{29pt}\vdots \\
            e_c(\sigma, X) \geq \rho_c n \\
            e_s(\sigma, X) \geq \rho_s n
        \end{cases} \right) \\
        & \leq         \left(\prod_{k=2}^L \frac{|X_k|!}{\left(\frac{1}{k} |X_k|\right)!}\right)\frac{|X_{>L}|!}{\left(\frac{1}{L+1} |X_{>L}|\right)!} n^{|X_c|} \exp \left( -\left[ \sum_{i=2}^L (\alpha - \frac{1}{i}) \rho_i + (2 \alpha - 1) (\rho_{>L} + \rho_c) + \alpha \rho_s \right]n\log n+o(n \log n) \right) \\
        & \leq \exp \left( \sum_{i=2}^L [(1 - \frac{1}{i}) \kappa_i - (\alpha - \frac{1}{i})\rho_i]n \log n + [\kappa_{res} - (2 \alpha - 1)\rho_{res}]n\log n + o(n \log n) \right)
        \end{split}
    \end{equation}
    where $\kappa_{res} = \kappa_{>L} + \kappa_{c}$ and $\rho_{res} = \rho_{>L} + \rho_c + \rho_s$. To conclude the proof, it is thus sufficient to show that one of the following holds:

    \begin{itemize}
        \item for some $k \in \llb 2, L \rrb$,  $S_k \defeq \dsum_{i=2}^k [(1 - \frac{1}{i}) \kappa_i - (\alpha - \frac{1}{i})\rho_i] \leq -\delta < 0$;
        \item or $S_{tot} \defeq \dsum_{i=2}^L [(1 - \frac{1}{i}) \kappa_i - (\alpha - \frac{1}{i})\rho_i] + [\kappa_{res} - (2 \alpha - 1)\rho_{res}] \leq -\delta < 0$.
    \end{itemize}

    To prove this, we will use the following trick. For $2 \leq i \leq L$, set

    \begin{equation}
        w_i = \frac{1}{\alpha - \frac{1}{i}}; w_{res} = \frac{1}{2 \alpha - 1}.
    \end{equation}
    such that $(w_2, \ldots, w_L, w_{res})$ is a decreasing sequence.
    
    Then, summing by parts,

    \begin{equation}
        \begin{split}
            \sum_{i=2}^{L-1} & (w_i - w_{i+1})S_i + (w_L - w_{res}) S_L + w_{res}S_{tot} \\
            &= \sum_{i=2}^L w_i[(1 - \frac{1}{i}) \kappa_i - (\alpha - \frac{1}{i})\rho_i] + w_{res}[\kappa_{res} - (2 \alpha - 1)\rho_{res}] \\
            & \leq \sum_{i = 2}^L [\frac{1}{2 \alpha -1} \kappa_i - \rho_i] + \frac{1}{2 \alpha -1}\kappa_{res} - \rho_{res} \\
            & \leq \frac{1}{2 \alpha - 1}\kappa - (\frac{1}{2 \alpha - 1} + \eta)\kappa = -\eta \kappa.
        \end{split}
    \end{equation}

    This means that, for some $i \in \llb 2, L \rrb \cup \{ \text{tot}\}$,

    \begin{equation}
        S_i \leq -\frac{\kappa \eta}{w_2}
    \end{equation}
    concluding the proof.
\end{proof}

\section{Additional proofs for Theorem \ref{impossible}} \label{impossibleproofs}
In this section, we prove Lemmas \ref{bayesian}, \ref{ppost}, and \ref{negdef}.

\begin{proof}[Proof of Lemma \ref{bayesian}]
    Let $B = B(\pi_0, x - \eta)$ be any ball of radius less that $x$ within $\Sym_n$. We know that

    \begin{equation}
        \E[\Proba(\Neg|G, H, (p_e))] = \Proba(\Neg) = o(1)
    \end{equation}
    
    so $\Proba(\Neg|G, H, (p_e)) = o_\Proba(1)$. Thus:

    \begin{equation}
        \begin{split}
            \Proba(\pi^* \in B | G, H, (p_e)) & \leq \Proba(\pi^* \in\Neg|G, H, (p_e)) + \sum_{\pi \in B \setminus \Neg} \Proba(\pi^* = \pi|G, H, (p_e)) \\
            &\leq o_\Proba(1) + e^{-\gamma n \log n} \sum_{\pi \in B \setminus \Neg}  \Proba(\pi^* \in F(\pi)|G, H, (p_e)) \\
            & \leq o_{\Proba}(1) + e^{-\gamma n \log n} \sum_{\pi' \in \Sym_n} \sum_{\pi \text{ s.t. }\pi' \in F(\pi)} \Proba(\pi^* =\pi'|G, H, (p_e)) \\
        & \leq o_{\Proba}(1) + e^{(-\gamma + o(1))n \log n} \sum_{\pi' \in \Sym_n}\Proba(\pi^* =\pi'|G, H, (p_e)) \\
        & = o_{\Proba}(1) + e^{(-\gamma + o(1))n \log n} \Proba(\pi^* \in\Sym_n|G, H, (p_e)) \\
        & = o_{\Proba}(1).
        \end{split}
    \end{equation}
    The previous argument can also easily be seen to be uniform in $\pi_0 \in \Sym_n$.
    By standard Bayesian decision theory arguments (see \cite[Section 2.2]{vassauxmassoulie2026} for instance), this shows that any estimator $\hat{\pi}$ verifies $\Proba(d(\hat{\pi}, \pi^*) \leq x- \eta) = o(1)$, concluding the proof.
\end{proof}

\begin{proof}[Proof of Lemma \ref{ppost}]
    
    Let $\pi \in \Sym_n$ and $G_0,H_0 \in \mathbb G_n$. We first compute the
joint probability of the event $\{\pi^*=\pi,\,G=G_0,\,H=H_0\}$, conditionally
on $(p_e)_{e\in E}$. We have:
\begin{align}
& \mathbb P(\pi^*=\pi,\,G=G_0,\,H=H_0\mid (p_e)) \notag\\
&\qquad =
\frac{1}{n!}\prod_{e\notin G_0\vee_\pi H_0}
\bigl(1-p_e(2s-s^2)\bigr)
\Bigl(\prod_{e\in G_0\vee_\pi H_0}
p_e \Bigr) s^{|\pi(G_0)| + |H_0|}(1-s)^{|\pi(G_0) \Delta H_0|}
.
\end{align}
Note that $\prod_{e\in E}
\bigl(1-p_e(2s-s^2)\bigr)$ and $s^{|\pi(G_0)| + |H_0|}$ do not depend upon $\pi$. Thus, uniformly in $\pi$,
\begin{align}
\mathbb P(\pi^*=\pi,& \, G=G_0,\,H=H_0\mid (p_e)) \\
&\propto
\exp\left(
\sum_{e\in G_0\vee_\pi H_0} \log \left( \frac{p_e}{1 - p_e(2s - s^2)} \right) + |G_0 \Delta_{\pi} H_0| \log (1-s)
\right) \notag\\
& \defeq \exp\left(
\sum_{e\in G_0\vee_\pi H_0} l_e + |G_0 \Delta_{\pi} H_0| \log (1-s)
\right)
\end{align}

Set $\eps > 0$. Let $\Neg_1 \subseteq \Sym_n$ be the subset of $\pi \in \Sym_n$ such that

\begin{equation}
    \# \{ e \in G \lor_{\pi} H: p_e \geq \frac{\log n}{n} \} \geq \eps n
\end{equation}
By our hypotheses, $\Proba(\pi^* \in \Neg_1) = o(1)$.

Now, let $\pi \in \Sym_n \setminus \Neg_1$; and let $\sigma$ be an automorphism of $I(\pi)$.
Then 
\begin{equation}
I(\pi)\subseteq \sigma\pi(G)\cap H = I(\sigma\pi).
\end{equation}
so that $|U(\pi)| \geq |U(\sigma \pi)|$. 

Furthermore, if $e \in E$: 
\begin{itemize}
    \item if $p_e \leq \frac{\log n}{n}$, we also know that $p_e \geq \frac{1}{n (\log n)^r}$ so that $l_e = - \log n + O(\log \log n)$;
    \item otherwise, we still know that $\frac{1}{n (\log n)^r} \leq p_e \leq 1-n^{-3}$ and $-\log n + O(\log \log n) \leq l_e \leq 3 \log n$. 
\end{itemize}
Thus, splitting up the two cases: 
\begin{equation}
    \begin{split}
        \sum_{e \in U(\pi)} l_e  & \leq 3 \eps n \log n - |U(\pi)| \log n (1  +O(\frac{\log \log n}{\log n})) \\
        & = -(|U(\pi)| - 3 \eps n + o(n)) \log n. \\
    \end{split}
\end{equation}

As a result,
\begin{equation}
    \begin{split}
         \sum_{e \in U(\pi)} l_e  & \leq -(|U(\sigma \pi)| - 3 \eps n + o(n)) \log n \\
         & \leq \sum_{e \in U(\sigma \pi)} l_e + (3 \eps n + o(n)) \log n.
    \end{split}
\end{equation}

This means that for arbitrarily small $\eps > 0$, 

\begin{equation}
    \Proba(\pi^* = \pi | G, H, (p_e)) \leq \Proba(\pi^* = \sigma \pi | G, H, (p_e))e^{(3 \eps n + o(n)) \log n}
\end{equation}

concluding the proof.
\end{proof}

\begin{proof}[Proof of Lemma \ref{negdef}]
    Let us first prove that, with high probability, the permutation $\pi^*$ verifies the property detailed in the lemma, which we denote by $(\mc{P}(\pi^*))$. Indeed, by definition of $x_{sparse}$, there exists $c_\eps \goesto{\eps \to 0} 0$ such that $|V_{\leq(1-\eps)}(\pi^*)| = |V_{\leq(1-\eps)}| \geq x_{sparse} n(1-c)$ whp; and, by Proposition \ref{balancedload2}, $I[V_{\leq(1-\eps)}]$ is a union of connected components of size at most $\frac{1}{\eps}$. Thus, given a subset $A$ verifying the conditions of the lemma, we may partition $A$ into subsets $A_1, \ldots, A_p$ such that no edges of $I$ link $A_i$ to $A_j$ for $i \neq j$; and all connected components of $I_{A_i}$ are isomorphic. Since $p$ is independent of $n$ (at most equal to $2^{\frac{1}{\eps^2}}$), for some $\gamma > 0$, there are at least $\exp(\gamma n \log n)$ automorphisms $(\sigma_i)$ of $A$. Extending these automorphisms to $V_{sparse}^\eps$ by fixing the points of $A^c$ shows that $\pi^*$ indeed verifies the required property.

This means that, if we set $\Neg_2 = \{ \pi \in \Sym_n: \mc{P}(\pi) \text{ is false }\}$, $\Proba(\pi^* \in \Neg_2) = o(1)$, concluding the proof.
\end{proof}

\section{Proofs of results from Section \ref{corollaries}} \label{corollaryproofs}

In this section, we provide the extra elements necessary in order to deduce the corollaries from Section \ref{corollaries}. \\

\underline{Corollary \ref{cores}.}
This follows from Theorem \ref{possible} and Proposition \ref{balancedload2}.\\

\underline{Corollary \ref{giant}.}
Our job is to show that under the conditions of the corollary, there exists $\gamma, \delta > 0$ such that  $|V_{\geq(1 + \gamma)}| \geq \delta n$ whp; applying Theorem \ref{possible} then shows the corollary.  

We will fix $\gamma, \delta = \frac{\eps}{2}, \frac{\eta}{2}$. Assume by contradiction that $|V_{\geq(1 + \gamma)}| < \delta n$ with non-negligible probability. Then, by hypothesis,

\begin{equation}
    \mc{G}[V_{\leq(1 +\gamma)}] \geq (1 + \eps) |V_\mc{G}| \text{ with non-negligible probability},
\end{equation}

but, by Proposition \ref{balancedload1},
\begin{equation}
    \mc{G}[V_{\leq(1 +\gamma)}] \leq (1 + \gamma) |V_\mc{G}|,
\end{equation}

which is a contradiction. \\

\underline{Corollary \ref{compare}.}
The first point follows from Corollary \ref{cores} and Theorem \ref{ersov}. For the second point, simply note that by \cite[Lemma IV.1]{raczsridhar2023}, the $k$-core occupies a proportion $1 - o_\Proba(1)$ of all vertices, concluding the proof.  \\

\underline{Corollary \ref{chunglu}.}
Note that under the conditions of the corollary, $I$ is a Chung-Lu graph, with law $\mu = s^2 \nu$. Let us begin by proving that
\begin{equation} \label{notactuallytrivial}
    \mu \text{ is weakly inhomogeneous} \Longleftrightarrow \E[\mu] < +\infty.
\end{equation}

On one hand, assume that $\E[\mu] < +\infty$. Set $S = \dfrac{1}{n}\dsum_{w \in V} d_w$; then, by a standard Chernoff bound, there exists $c > 0$ such that $\Proba(S \leq \frac{1}{2} \overline{d}) \leq e^{-c n}$. Then, for any $M > 0$, picking $\mathbf{e} = \{ u, v \} \in $ $ V$ uniformly at random, 

\begin{equation}
    \begin{split}
        \E[np_{\mathbf{e}} \1_{p_{\mathbf{e}} \geq \frac{M}{n}}] & \leq \E\left[\left( \frac{d_u d_v}{S} \1_{S \geq \frac{\overline{d}}{2}} + n \1_{S < \frac{\overline{d}}{2}}\right) \1_{p_{\mathbf{e}} \geq \frac{M}{n}} \right]  \\
        & \leq \frac{2}{\overline{d}} \E[d_u d_v\1_{d_u d_v \geq \frac{\overline{d} M}{2} }] + n e^{-cn}  \\
        \goesto{n \to +\infty, M \to +\infty} 0,
    \end{split}
\end{equation}
since $d_u d_v \in L^1$.

On the other hand, if $\E[\mu] = +\infty$, the expected number of edges in $I$ is $\omega(n)$, and thus $I$ cannot be weakly inhomogeneous.
\\

Having now proven (\ref{notactuallytrivial}), we move on proving our corollary. We will rely upon the following statement, which follows from \cite{vanderhofstad2024}.

\begin{theorem} \label{giantexistencechunglu}
    Let $K$ be a Chung-Lu graph with associated degree law $\mu \in L^1$, and set $d^*(K) = \frac{\E[D^2]}{\E[D]}$ where $D \sim \mu$.
    \begin{itemize}
         \item Assume that $d^* > 1$. Then there exist constants
    $c_K,\delta_K>0$ such that, with high probability, $K$ has a connected
    component $\mc{G}$, with vertex set $V_\mc{G}$, satisfying
    $|V_\mc{G}| \geq c_K n$ and
    \begin{equation}
       |K[V_\mc{G}]| \geq (1+\delta_K)|V_\mc{G}|.
        \end{equation}
        Moreover, the degrees inside $\mc{G}$ are uniformly integrable in the
        following sense: for every $\zeta>0$, there exists $\eta>0$ such that,
        with high probability,
        \begin{equation}
         \sup_{\substack{N \subseteq V_\mc{G}\\ |N|\leq \eta n}}
         \sum_{v\in N}\deg_K(v)
         \leq \zeta |V_\mc{G}|.
        \end{equation}
        \item Assume that $d^* < 1$. If we pick $v \in V$ uniformly at random, then the connected component $\mc{T}_v$ of $v$ is a tree with high probability; furthermore, there exists a random variable $\mc{T}$ on the set of finite-size trees such that
        \begin{equation}
            \mc{T}_v \overset{\text{(law)}}{\goesto{n \to +\infty}} \mc{T}.
        \end{equation}
    \end{itemize}
\end{theorem}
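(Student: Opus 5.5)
The plan is to identify the Chung--Lu graph with a rank-one inhomogeneous random graph and read every claim off its local weak limit, citing \cite{vanderhofstad2024} for the general theory. With weights $d_v$ i.i.d.\ from $\mu\in L^1$ and $p_{\{u,v\}}=\min(1,d_ud_v/\sum_w d_w)$, the model is asymptotically equivalent to the Norros--Reittu and generalised random graph models. For these models \cite{vanderhofstad2024} gives local convergence in probability. The limit is a unimodular branching process. The root has a mixed Poisson offspring law $\mathrm{Poi}(D)$. Every later vertex has a mixed Poisson offspring law $\mathrm{Poi}(D^\star)$, where $D^\star$ is the size-biased version of $D$. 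The mean offspring is $\E[D^\star]=\E[D^2]/\E[D]=d^*$, which is the origin of the threshold $d^*=1$. I would also check in passing that the empirical degree distribution converges in $L^1$, i.e.\ $\frac1n\sum_v\deg_K(v)\to\E[D]$. This follows from the law of large numbers for $(d_v)$ and a second-moment bound on the edge count. The truncation at $1$ only affects $o(n)$ edges, by the uniform integrability argument already used in the proof of (\ref{notactuallytrivial}).

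Subcritical case ($d^*<1$). The limiting branching process is subcritical, so the limit tree $\mc{T}$ is finite almost surely. I then fix $R$ with $\Proba(\mathrm{height}(\mc{T})<R)\geq 1-\delta$. Local convergence applied to the ball $B_{R+1}(v)$ shows that, with probability at least $1-2\delta$ for large $n$, this ball is a tree isomorphic to a copy of $\mc{T}$ of height less than $R$. On that event the ball has no vertex at distance $R$, so it is the whole component $\mc{T}_v$. Hence $\mc{T}_v$ is a tree with probability tending to one, and it converges in law to $\mc{T}$.

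Supercritical case ($d^*>1$). The existence of a unique giant $\mc{G}$ with $|V_\mc{G}|/n\to\zeta:=\Proba(|\mc{T}|=\infty)>0$ is the standard giant component theorem for rank-one graphs in \cite{vanderhofstad2024}. I take $c_K=\zeta/2$. For the edges, \cite{vanderhofstad2024} also gives convergence of the edge count of the giant:
\begin{equation}
\frac{|K[V_\mc{G}]|}{n}\to \frac12\E\bigl[\deg(o)\1_{|\mc{T}|=\infty}\bigr].
\end{equation}
If needed, I would rederive this by local convergence applied to the truncated degree statistic $\deg(v)\1_{|\mc{C}(v)|\geq k}$, using the $L^1$ convergence of degrees to control truncation. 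It therefore suffices to prove
\begin{equation}
\E\bigl[\deg(o)\mid |\mc{T}|=\infty\bigr]>2.
\end{equation}
The argument is a comparison with the Aldous--Lyons results on unimodular random graphs. Conditioned on being infinite, the limit is still a unimodular tree. For such a tree, expected root degree $\geq 2$ holds, with equality iff the tree has at most two ends almost surely. The mixed Poisson branching process has $\Proba(\mathrm{Poi}(D^\star)\geq 2)>0$ because $\nu\neq\delta_0$. Conditioned on survival, it therefore has infinitely many ends almost surely, and the inequality is strict. This yields $\delta_K>0$.

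Uniform integrability of degrees in $\mc{G}$. This follows from the $L^1$ convergence of the empirical degree distribution. Together with convergence in distribution, it makes the family $\{\deg(v)\}$ uniformly integrable under the uniform measure on $V$, in probability. Concretely, choose $M$ with $\frac1n\sum_v\deg(v)\1_{\deg v>M}\leq\zeta c_K/2$ whp, and set $\eta=\zeta c_K/(2M)$. Then for any $N$ with $|N|\leq\eta n$,
\begin{equation}
\sum_{v\in N}\deg(v)\leq M\eta n+\frac{\zeta c_K n}{2}\leq \zeta |V_\mc{G}|
\end{equation}
whp.

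I expect the main obstacle to be the strict excess $|K[V_\mc{G}]|\geq(1+\delta_K)|V_\mc{G}|$. It requires both the edge-count convergence for the giant and the strict inequality $\E[\deg(o)\mid\text{survival}]>2$. If the Aldous--Lyons route is awkward, I would try a fallback. Since $d^*>1$, the limit tree conditioned on survival contains a branching backbone (the vertices with infinite line of descent), and that backbone has positive density of branch points. From this one would show directly that the $2$-core of $\mc{G}$ has linear size and cyclomatic number $\Theta(n)$. That gives $|K[V_\mc{G}]|-|V_\mc{G}|=\Theta_\Proba(n)$.
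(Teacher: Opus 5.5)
Your proposal is correct and follows the paper's route: the paper simply defers to the rank-one inhomogeneous random graph theory of \cite{vanderhofstad2024}, and your unimodularity argument for the strict excess $|K[V_{\mc{G}}]|\geq(1+\delta_K)|V_{\mc{G}}|$ (Aldous--Lyons applied to the local limit conditioned on $|\mc{T}|=\infty$), together with your derivation of the degree uniform integrability from $L^1$ convergence of the empirical degrees, makes explicit what the paper leaves to that citation. The one inaccuracy is your aside that Chung--Lu with merely integrable i.i.d.\ weights is asymptotically equivalent to the Norros--Reittu and generalised random graph models, since total-variation equivalence can fail when $\E[D^2]=\infty$; nothing rests on it, because local convergence holds for the Chung--Lu model directly, or via a coupling changing only $o_\Proba(n)$ edges, which is harmless given uniformly integrable degrees.
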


In the first case, we will use Corollary \ref{giant} to conclude. Pick $\zeta = \frac{\delta_I}{2}$ (as in the first item of the theorem). Then, if $N \subseteq V_\mc{G}$ with size at most $\eta(\zeta) \cdot n$,

\begin{equation}
    |I[V_\mc{G} \setminus N]| = |I[\mc{G}]| - \sum_{v \in N} \deg_I(v) \geq (1 + \frac{\delta_K}{2} - o_\Proba(1)) |V_{\mc{G}}|.
\end{equation}
This verifies the conditions of Corollary \ref{giant}.

In the second case, the tree-convergence statement implies that 

\begin{equation}
    \lim_{\eps \to 0} \operatorname*{pliminf}_{n \to +\infty} \frac{|V_{\leq(1 - \eps)}|}{n} = 1
\end{equation}

by Proposition \ref{balancedload2}. If $\nu$ is supported over $[\eps, +\infty)$ for some $\eps > 0$ then applying Theorem \ref{impossible} concludes.

\underline{Corollary \ref{sbm}.} 

By \cite{vanderhofstad2024}, Theorem \ref{giantexistencechunglu} holds as-is for the stochastic block model if we replace $d^*$ by $\lambda^*$. Thus, the exact same argument allows us to deduce Corollary \ref{sbm}.

\section{Regarding the correctly aligned subsets} \label{discussion}

In this section, we discuss the discrepancy between the sets $V_{\geq(\rho + \eps)}$ and $\widehat{V_{dense}}$ from Theorem \ref{possible}. We will prove that the two are roughly the same in the weakly inhomogeneous case, before giving an example where verifying $|V_{\geq(\rho + \eps)}|, |\widehat{V_{dense}}| = \Theta_\Proba(n)$ but $|\widehat{V_{dense}} \cap V_{\geq(\rho + \eps)}| = o_\Proba(n)$.

Let us begin with our positive result.

\begin{proposition} \label{posrecover}
    Assume that $(G, H)$ is a weakly inhomogeneous system. Then, for any $0 < \eps' < \eps $,
    
    \begin{equation}
        \widehat{V_{dense}}(\eps) \subseteq V_{\geq (\rho + \eps')} \qquad \text{(up to }o_\Proba(n) \text{ vertices)}.
    \end{equation}
\end{proposition}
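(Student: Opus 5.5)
The plan is to compare $I(\hat\sigma)$ with the true intersection graph $I$ on $\widehat{V_{dense}}$, using what the proof of Theorem \ref{possible} already gives. Write $S = \widehat{V_{dense}}(\eps) = V_{\geq(\rho+\eps)}(I(\hat\sigma))$ and $\tau = \hat\sigma(\sigma^*)^{-1}$. That proof shows that $N = S \cap \Der \tau$ satisfies $|N| = o_\Proba(n)$. Set $S' = S \setminus N \subseteq \Fix \tau$.

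\textbf{Step 1: localise the balanced load to $S$.} Let $\theta$ be a balanced allocation on $I(\hat\sigma)$. As in the proof of Proposition \ref{balancedload1}, $\theta(v \to u) = 0$ whenever $u \in S$ and $v \notin S$. Hence every $u \in S$ receives load only from edges inside $S$. Restrict $\theta$ to $I(\hat\sigma)[S]$ and give the vertices of $S^c$ (now isolated) load $0$. I will check that this restricted allocation is still balanced. Then uniqueness of the balanced load gives $w_{I(\hat\sigma)[S]}(u) = w_{I(\hat\sigma)}(u) \geq \rho+\eps$ for every $u \in S$.

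\textbf{Step 2: compare $I(\hat\sigma)[S]$ with $I[S']$.} If $u,v \in \Fix\tau$, then $\hat\sigma^{-1}(u) = (\sigma^*)^{-1}(u)$, and likewise for $v$. So $\{u,v\} \in I(\hat\sigma)$ iff $\{u,v\} \in I$, which gives $I(\hat\sigma)[S'] = I[S']$. The symmetric difference $I(\hat\sigma)[S] \,\Delta\, I[S']$ therefore consists only of edges with an endpoint in $N$. All of these lie in $U \lor_{\tau} U$-type graphs; more concretely, they lie in $\hat\sigma(G) \cup H \cup \sigma^*(G)$, which are relabelled copies of subsamples of $G^\uparrow$.

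\textbf{Step 3 (main obstacle): show that $o(n)$ vertices carry only $o(n)$ such edges, uniformly over the random set $N$.} This is where weak inhomogeneity enters. By Proposition \ref{weakchar}, up to $o_\Proba(n)$ edges, $G^\uparrow$ is contained in some $\overline{K} \sim \mc{G}(n, \lambda_n/n)$ with $\lambda_n \to \infty$ as slowly as we like. For $\mc{G}(n,\lambda_n/n)$, a union bound over subsets together with a Chernoff bound should show the following whp: for every fixed $\delta > 0$, every set of at most $\delta n$ vertices meets at most $C\lambda_n \delta\log(e/\delta)\, n$ edges. Since relabelling preserves this property, it transfers to $\hat\sigma(G)$ and $\sigma^*(G)$. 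Choosing $\lambda_n$ slowly relative to the rate at which $|N|/n \to 0$ (a diagonal argument in probability), this yields $|I(\hat\sigma)[S] \,\Delta\, I[S']| = o_\Proba(n)$. Getting this uniformity right, since $N$ depends on the data and $\lambda_n$ must be tuned, is the delicate point.

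\textbf{Step 4: conclude.} Fix $\delta > 0$ with $2\sqrt{\delta} < \eps - \eps'$. Whp the symmetric difference from Step 2 has size at most $\delta n$. The second point of Proposition \ref{balancedload3} then gives a set $\mc{A}$ with $|\mc{A}| \leq 2\sqrt\delta\, n$ such that, for $v \notin \mc{A}$,
\[
w_{I[S']}(v) \geq w_{I(\hat\sigma)[S]}(v) - 2\sqrt\delta > \rho + \eps'.
\]
By monotonicity (first point of Proposition \ref{balancedload3}), $w_I \geq w_{I[S']}$. Hence $S \setminus (N \cup \mc{A}) \subseteq V_{\geq(\rho+\eps')}$, and the exceptional set has size $o_\Proba(n) + 2\sqrt\delta\, n$. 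Letting $\delta \to 0$ gives the claimed inclusion up to $o_\Proba(n)$ vertices.
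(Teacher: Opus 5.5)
Your proof is correct, but it is organised differently from the paper's.

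\textbf{The paper's route.} It works directly with the bad set $M = \widehat{V_{dense}} \setminus (V_{\geq(\rho+\eps')} \cup N)$ and sandwiches the total load on $M$:
\begin{itemize}
\item the first inequality of Proposition \ref{balancedload1}, applied to $I(\hat\pi)$ (your $I(\hat\sigma)$), gives $(\rho+\eps)|M| \leq |I(\hat\pi)[M \leftrightarrow \widehat{V_{dense}}]|$;
\item the edges towards $N$ contribute $o_\Proba(n)$;
\item the remaining edges are edges of $I$, because $\hat\pi$ and $\pi^*$ agree off $N$;
\item the second inequality of Proposition \ref{balancedload1}, applied to $I$ with $Y = M \subseteq V_{<(\rho+\eps')}(I)$, bounds these by $(\rho+\eps')|M|$, so $|M| = o_\Proba(n)$.
\end{itemize}
This is a single averaging step with no loss.

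\textbf{How your route differs, and what it buys.} Your Step 1 is exactly the mechanism already packaged in the first inequality of Proposition \ref{balancedload1}. Your Step 4 replaces the second inequality by the perturbation bound of Proposition \ref{balancedload3}. That costs a $2\sqrt{\delta}$ slack through the hidden Markov step, which is harmless because $\eps' < \eps$ is strict; the paper needs strictness too. In return your argument is modular. It isolates the clean statement that $I(\hat\pi)[S]$ and $I[S']$ are $o_\Proba(n)$-close, and everything else then follows from general stability properties of $w$.

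\textbf{The shared input, and how to streamline your Step 3.} Both proofs rest on the same fact: the data-dependent set $N$ meets only $o_\Proba(n)$ edges. The paper asserts this in one line, whereas your Step 3 actually justifies it. It can be simplified in two ways.
\begin{itemize}
\item The symmetric difference in your Step 2 is exactly $I(\hat\pi)[N \leftrightarrow S] \subseteq H \subseteq G^\uparrow$, so no relabelled copies of $G$ are needed.
\item Instead of a diagonal choice of $\lambda_n$, fix a large constant $\lambda$. Uniform integrability plus Markov's inequality bounds the edges with $p_e \geq \lambda/n$ by $\zeta n$, outside an event of probability at most $\zeta$. Then apply your Chernoff/union bound in $\mc{G}(n, \lambda/n)$ to all sets of size $\delta n$, with $\delta$ fixed and small enough that $C\lambda\delta\log(e/\delta) \leq \zeta$. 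This makes the uniformity over the random set $N$ automatic.
\end{itemize}
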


As a result, if $\ds\lim_{\eps' \to \eps} \operatorname*{plimsup}_{n \to +\infty} \frac{|V_{\geq (\rho + \eps')} \setminus V_{\geq (\rho + \eps)}|}{n} = 0$ (which, in most models of interest, will be the case for most $\eps > 0$), then $|V_{\geq (\rho + \eps)} \setminus \widehat{V_{dense}}(\eps) | = o_\Proba(n)$. Since $|V_{\geq (\rho + \eps)}| \leq |\widehat{V_{dense}}(\eps)|$, this thus implies that, in this case,

\begin{equation}
    \widehat{V_{dense}}(\eps) = V_{\geq (\rho + \eps)} \qquad \text{(up to }o_\Proba(n) \text{ vertices)}.
\end{equation}

\begin{proof}
    We will denote $\widehat{V_{dense}} = \widehat{V_{dense}}(\eps)$. Fix $\eps' < \eps$, and set 

    \begin{equation}
        N = \{ v \in \widehat{V_{dense}} : \hat{\pi}(\pi^*)^{-1}(v) \neq v \}; \qquad M = \widehat{V_{dense}} \setminus (V_{\geq (\rho + \eps')} \cup N).
    \end{equation}

    Then,
    \begin{equation}
        \begin{split}
            (\rho + \eps )|M| & \leq \sum_{v \in M} w_{I(\hat{\pi})}(v) \\
            & \leq |I(\hat{\pi})[M \leftrightarrow \widehat{V_{dense}}]| \qquad \text{(by Proposition \ref{balancedload1})} \\
            & = |I(\hat{\pi})[M \leftrightarrow (\widehat{V_{dense}} \setminus N)]| + |I(\hat{\pi})[M \leftrightarrow N]|.
        \end{split}
    \end{equation}
    By Theorem \ref{possible}, $|N| = o_\Proba(n)$; thus, since the system is weakly inhomogeneous, $|I(\hat{\pi})[M \leftrightarrow N]| = o_\Proba(n)$. Furthermore, since $\hat{\pi} = \pi^*$ over $M$ and over $ (\widehat{V_{dense}} \setminus N)$, we may write

    \begin{equation}
        \begin{split}
            |I(\hat{\pi})[M \leftrightarrow (\widehat{V_{dense}} \setminus N)]|& + |I(\hat{\pi})[M \leftrightarrow N]|  = |I(\pi^*)[M \leftrightarrow (\widehat{V_{dense}} \setminus N)]| + o_\Proba(n) \\
            & \leq |I(\pi^*)[M \leftrightarrow (M \cup V_{\geq(\rho + \eps')} )]| + o_\Proba(n) \\
            & \leq \sum_{v \in M} w_I(v) + o_\Proba(n)\qquad \text{(by Proposition \ref{balancedload1})} \\
            & \leq (\rho + \eps')|M| + o_\Proba(n).
        \end{split}
    \end{equation}

    This necessarily implies that $|M| = o_\Proba(n)$, concluding the proof.
    
    \end{proof}

As promised. we will now build an example of a system where $|V_{\geq(\rho + \eps)}|, |\widehat{V_{dense}}| = \Theta_\Proba(n)$ but $|\widehat{V_{dense}} \cap V_{\geq(\rho + \eps)}| = o_\Proba(n)$. The following construction was initially proposed by a large language model; the author has made significant technical changes and verified all steps.

Fix an odd integer $d\geq 5$ and choose $\beta\in(\frac{4}{d},1)$. Set
\begin{equation}
    q=q_n=\lfloor n^\beta\rfloor.
\end{equation}
Partition $V=[n]$ into four disjoint sets
\begin{equation}
    V=A\sqcup B\sqcup C\sqcup D
\end{equation}
with
\begin{equation}
    |C|=|D|=q,\qquad |A|\sim \frac n3,
    \qquad |B|\sim \frac{2n}{3}.
\end{equation}
Let $\iota:C\to D$ be a bijection. For each $x\in A \sqcup B$, choose a
$d$-element subset
\begin{equation}
    S_x\subseteq C.
\end{equation}
Equivalently, we are choosing two random left-$d$-regular bipartite
graphs, one between $A$ and $C$, and one between $B$ and $C$.

We shall choose the families $(S_x)_{x\in A \sqcup B}$ at
random, independently and uniformly among all $d$-subsets of $C$. Then, setting 

\begin{equation}
    K_A
    =
    \bigl\{\{x,c\}:x\in A,\ c\in S_x\bigr\},
\end{equation}
and
\begin{equation}
    K_B^C
    =
    \bigl\{\{y,c\}:y\in B,\ c\in S_y\bigr\},  \qquad  K_B^D
    =
    \bigl\{\{y,\iota(c)\}:y\in B,\ c\in S_y\bigr\},
\end{equation}
we will define
\begin{equation}
    G=K_A\cup K_B^C,
    \qquad
    H=K_A\cup K_B^D.
\end{equation}
(We simplify notations by assuming that $\pi^* = \Id$; replacing $G$ by $(\pi^*)^{-1}(G)$ gives the general construction.)
Then the true intersection and union graphs are 
\begin{equation}
    I=G\cap H=K_A; \qquad U=G\cup H=K_A\cup K_B^C\cup K_B^D.
\end{equation}

\begin{figure}
    \centering
    \includegraphics[width=\linewidth]{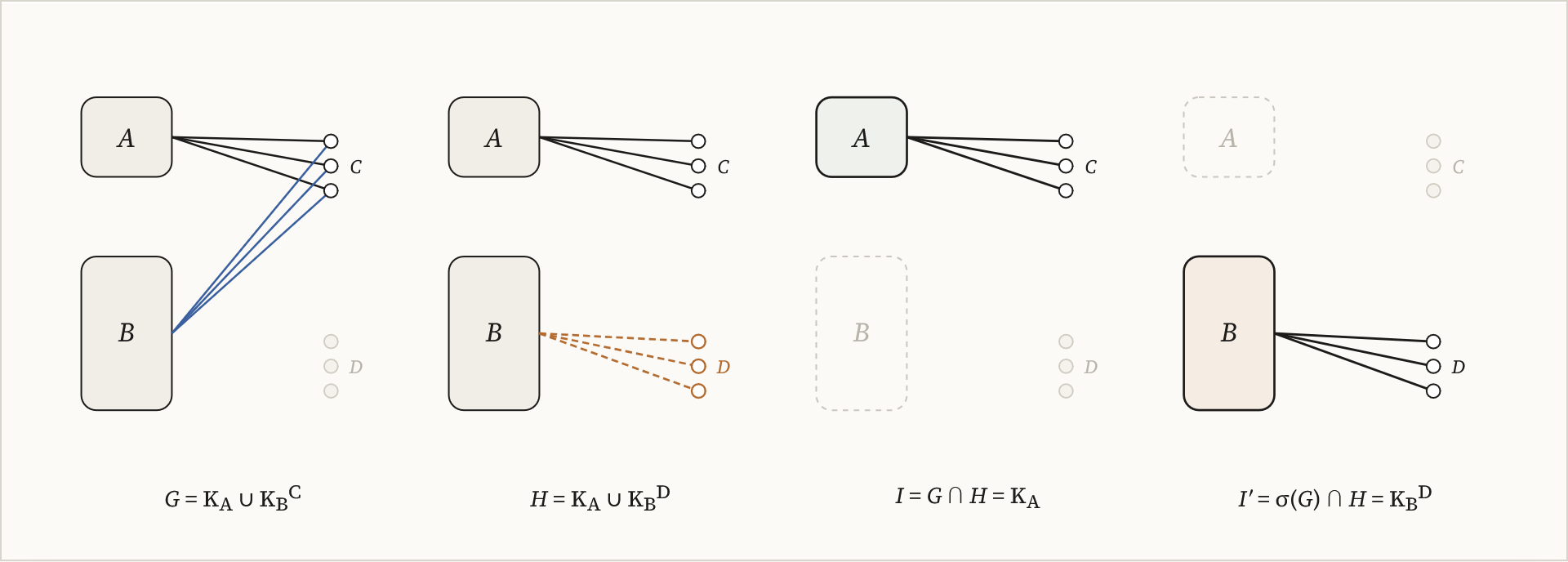}
    \caption{Schematic representation of the counterexample, under the convention $\pi^* = \Id$. The sets $C, D$ have size $o(n)$ but have $\Theta(n)$ edges with at least one endpoint inside them; as per the proof of Proposition \ref{posrecover}, this is necessary.\\}
    \label{cexfigure}
\end{figure}

We will prove that, setting $t = d - \frac{1}{2}$,

\begin{itemize}
    \item $SOV(U) < t$;
    \item $V_{\geq t}(I) = A$ (up to $o_\Proba(n)$ vertices);
    \item $V_{dense} = B$ (up to $o_\Proba(n)$ vertices).
\end{itemize}

\medskip

To show this, we will need the following facts about the model we just constructed.
\begin{lemma}
The following hold with high probability.

\begin{enumerate}
    \item For every $P, Q\subseteq C$, set
    \begin{equation}
        c_A(P) = \frac{1}{n}\#\{x\in A:|S_x \cap P| = d\}, \qquad  c_B(Q) = \frac{1}{n}\#\{y\in B:|S_y \cap Q| = d\}. 
    \end{equation}
    Then, if $P, Q$ are disjoint, $c_A(P) + c_B(Q) \leq \frac{2}{3} + o_\Proba(1)$ (uniformly in $P, Q \subseteq C$), with equality iff $|Q \Delta C| = o_\Proba(q)$.
    \item Uniformly over every permutation $\sigma\in\Sym_n$,
    \begin{equation} \label{sovd-1}
        \#\left\{
            v\in (A\sqcup B)\cap \Der\sigma:
            \deg_{U\land_\sigma U}(v)\geq d
        \right\}
        =o(n).
    \end{equation}
\end{enumerate}
\end{lemma}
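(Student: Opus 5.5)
We treat the two items separately. Both are first-moment / union-bound statements over structures in the random left-$d$-regular bipartite graph between $A\sqcup B$ and $C$. The key scale separation is that $|C|=q=n^{\beta}=o(n)$, while $\beta d>4$. Hence a fixed $d$-set in $C$ is hit by a given $x$ with probability $\binom{q}{d}^{-1}\approx q^{-d}\ll n^{-4}$. Consequently, with high probability the sets $S_x$ are pairwise distinct, and more generally every configuration pinning down $\Theta(1)$ coincidences among the $S_x$ has negligible probability.

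\textbf{Item 1.} Fix $P\subseteq C$ with $|P|=pq$. Then $\#\{x\in A: S_x\subseteq P\}$ is a sum of $|A|$ independent Bernoullis with mean $\binom{pq}{d}/\binom{q}{d}=p^d+o(1)$. A Chernoff bound gives deviation $\eta n$ with probability $e^{-c\eta^2 n}$. There are only $2^{2q}=e^{o(n)}$ pairs $(P,Q)$, so a union bound yields, uniformly,
\[
c_A(P)=\tfrac13 p^d+o_\Proba(1),\qquad c_B(Q)=\tfrac23 r^d+o_\Proba(1),\qquad r=|Q|/q .
\]
Disjointness forces $p+r\le 1$. The function $f(p,r)=\frac13p^d+\frac23r^d$ on the simplex is convex, so its maximum is at a vertex. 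The values there are $f(0,1)=\frac23$ and $f(1,0)=\frac13$. Since $f(p,r)\le\frac13p+\frac23r^d$ and $r^d<r$ away from the endpoints, equality (up to $o(1)$) forces $r=1-o(1)$, i.e. $|Q\Delta C|=o(q)$. This is routine.

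\textbf{Item 2.} This is the main obstacle, because the union is over all $\sigma\in\Sym_n$ and a naive union bound costs $n!$. I would avoid enumerating $\sigma$ by extracting a witness structure of bounded complexity.

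Take $v\in A\sqcup B$ with $v\ne\sigma(v)$ and $\deg_{U\land_\sigma U}(v)\ge d$. The neighbours of $\sigma(v)$ in $U$ lie in $C\cup D$, and $\sigma(v)$ itself must be adjacent in $U$ to at least $d$ vertices, so $\sigma(v)\in (A\sqcup B)\cup C\cup D$. Suppose first that $\sigma(v)=v'\in A\sqcup B$, $v'\ne v$. The $U$-neighbourhood of $v$ is $S_v$, or $S_v\cup\iota(S_v)$ if $v\in B$. We need $\sigma$ to map at least $d$ of these neighbours into the neighbourhood of $v'$.

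Call such $v$ \emph{bad}. Attach to each bad $v$ the pair $(v',\sigma|_{N_U(v)})$. I would show that the number of bad $v$ is at most $O(q^{2d}\cdot\text{something small})$ by a charging argument. Since $\sigma$ is a single injection on $C\cup D$, each $d$-tuple image $T=\sigma(S_v\text{-part})$ determines $S_{v'}$ (or $\iota^{-1}$ of it) up to $O(1)$ choices. With high probability, each $d$-set of $C$ is $S_x$ for at most $O(1)$ vertices $x$. Indeed the expected number of $d$-sets hit $k$ times is $\binom{q}{d}(n/\binom qd)^k$, which is $o(1)$ for $k>(d\beta)/(d\beta-1)$, a constant.

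So the map $v\mapsto \sigma(S_v)$ is $O(1)$-to-one, and its image must be a $d$-set equal to some $S_{v'}$ or $\iota$-variant. The remaining task is to show that $\sigma$, restricted to $C\cup D$ (only $2q$ points), cannot send $\Theta(n)$ distinct sets $S_v$ onto sets of the family $\{S_x\}$ unless it essentially acts as the identity on $C$, or as the swap $\iota$. Here I would use a counting bound over $\tau=\sigma|_{C\cup D}$, which has only $(2q)!=e^{O(n^\beta\log n)}=e^{o(n)}$ possibilities. For fixed $\tau$, the events ``$\tau(S_v)\in\{S_x\}_{x\ne v}$'' are nearly independent across $v$, each with probability at most $n\,q^{-d}\le n^{1-\beta d}$. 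The probability that $\eta n$ of them occur is then at most $\binom{n}{\eta n}n^{(1-\beta d)\eta n}=e^{-\Omega(n\log n)}$, which beats the $e^{o(n)}$ union over $\tau$. This is a Chernoff-type bound, and I would handle the dependence through the exposed $S_x$ by a martingale or by fixing $x$ disjoint from $v$ in pairs.

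It remains to treat the cases where $\tau(S_v)=S_v$ for many $v$, or where $\sigma(v)\in C\cup D$. In the first case $v$ itself would need to be fixed to gain overlap via the coincidence $S_{\sigma(v)}=S_v$, which has vanishing probability. In the second case only $2q=o(n)$ vertices $v$ can be involved. Hence the bad $v$ number $o(n)$ with high probability, uniformly in $\sigma$.
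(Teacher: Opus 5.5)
Your item 1 is correct and follows the paper's argument: a Chernoff bound with a union bound over the $e^{o(n)}$ pairs $(P,Q)$, then maximising $\frac13 p(P)+\frac23 p(Q)$ subject to $p(P)+p(Q)\le 1$.

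\textbf{Item 2 has a genuine gap.} You reduce the bad event to an exact coincidence ``$\tau(S_v)$ equals some $S_x$ (or an $\iota$-variant)''. This reduction fails as soon as $B$-vertices are involved.

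Write $N_U(y)$ for the $U$-neighbourhood of $y\in A\sqcup B$. For $y\in B$ it is $S_y\cup\iota(S_y)$, so the $d$ required common neighbours can be split between the $C$-copy and the $D$-copy. Two examples:
\begin{itemize}
\item Suppose $\sigma$ preserves $C$ and commutes with $\iota$, and $v,\sigma(v)\in B$. Then $|N_U(\sigma(v))\cap\sigma(N_U(v))| = 2|S_{\sigma(v)}\cap\sigma(S_v)|$. So a half-overlap $|S_{\sigma(v)}\cap\sigma(S_v)|\ge (d+1)/2$ already makes the vertex bad.
\item For $v\in A$ and $\sigma(v)\in B$, one only needs $(\sigma(S_v)\cap C)\cup\iota^{-1}(\sigma(S_v)\cap D)\subseteq S_{\sigma(v)}$. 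This can pin as few as $\lceil d/2\rceil$ elements of $S_{\sigma(v)}$.
\end{itemize}

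Such an overlap does not determine $S_{\sigma(v)}$, so your $O(1)$-to-one charging breaks down. Its probability is also of order $q^{-d/2}$, not $q^{-d}$: given $\sigma|_{C\sqcup D}$ and $S_v$, badness only forces $\lceil d/2\rceil$ elements of $S_{\sigma(v)}$ into a fixed subset of $C$ of size at most $2d$. Your bound $n^{1-\beta d}$ therefore controls the wrong event. A warning sign is that your computation $\binom{n}{\eta n}n^{(1-\beta d)\eta n}$ only needs $\beta d>1$, whereas the construction deliberately assumes $\beta>4/d$.

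\textbf{The repair is the paper's argument.}
\begin{enumerate}
\item Work directly with the event $|N_U(\sigma(v))\cap\sigma(N_U(v))|\ge d$ for $v\neq\sigma(v)$, both in $A\sqcup B$. The at most $2q$ vertices with $\sigma(v)\in C\sqcup D$ are negligible, as you note. Given $\sigma|_{C\sqcup D}$, this event has probability at most $C_d q^{-d/2}$.
\item From $\delta n$ bad vertices, extract about $\delta n/2$ vertex-disjoint pairs $(v_i,\sigma(v_i))$. Taking the partner to be $\sigma(v)$ itself, rather than ``some $x$'', makes the pairing injective for free. It also gives exact independence given $\sigma|_{C\sqcup D}$, which replaces your unspecified ``nearly independent''/martingale step.
\item Union bound over the pairs ($e^{\delta n\log n}$ choices) and over $\sigma|_{C\sqcup D}$. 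The latter has at most $n^{2q}=e^{o(n\log n)}$ choices; it is not $(2q)!$, since $\sigma$ need not preserve $C\sqcup D$, though this is harmless.
\end{enumerate}
The result is a bound $e^{(1-\beta d/4+o(1))\delta n\log n}=o(1)$, and this is exactly where $\beta d>4$ enters.
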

\begin{proof}

We begin with the first point. By standard concentration inequalities, uniformly in $P, Q \subseteq C$,
\begin{equation}
    c_A(P) = \frac{1}{3}p(P) + o_\Proba(1); \qquad c_B(Q) = \frac{2}{3} p(Q) + o_\Proba(1),
\end{equation}
where $p(P) \defeq \Proba(|S \cap P| =d)$ for $S$ chosen uniformly at random among $d$-element subsets of $C$. In particular, since $p(P) + p(Q) \leq 1$,

\begin{equation}
    c_A(P) + c_B(Q) = \frac{1}{3}p(P) + \frac{2}{3} p(Q) + o_\Proba(1) \leq \frac{2}{3} + o_\Proba(1),
\end{equation}
with equality iff $|Q \Delta C| = o_\Proba(q)$.

We now justify the second point. 

For $v\in A\sqcup B$, write $N_U(v)$ for its neighbourhood in $U$ inside
the hub set $C\sqcup D$, which is either $S_v$ or $S_v \cup \iota(S_v)$.
In particular,
\begin{equation}
    |N_U(x)|=d\quad(x\in A),
    \qquad
    |N_U(y)|=2d\quad(y\in B).
\end{equation}

Now, fix $\sigma \in \Sym_n$, and  pick $v \in (A \sqcup B) \cap \sigma^{-1}(A \sqcup B)$ uniformly at random. Then: 

\begin{equation} \begin{split}
\Proba & \left( v \in \Der \sigma \text{ and }\deg_{U\land_\sigma U}(v)\geq d \right) \\
    & = \Proba\left(v \in \Der \sigma \text{ and } |N_U(\sigma(v)) \cap \sigma(N_U(v))| \geq d \right) \leq C_d q^{-\frac{d}{2}}
\end{split}\end{equation}
where $C_d$ is a constant which only depends upon $d$. Thus, for any $\delta > 0$, by independence,

\begin{equation}
    \begin{split}
        \Proba &\left( \#\left\{
            v\in (A\sqcup B)\cap \Der\sigma:
            \deg_{U\land_\sigma U}(v)\geq d
        \right\} \geq \delta n\right) \\
        & \leq \Proba \left( \#\left\{
            v\in (A\sqcup B)\cap \sigma^{-1}(A \sqcup B) \cap  \Der\sigma:
            \deg_{U\land_\sigma U}(v)\geq d
        \right\} \geq \delta n\right) \\
        & \leq \Proba \left(\exists \text{ pairwise disjoint } \begin{cases}(v_1, v_1') \\ \ldots \\ (v_{\lceil \frac{\delta n}{2} \rceil}, v_{\lceil  \frac{\delta n}{2} \rceil}') \end{cases} \subseteq  (A\sqcup B)\cap \Der\sigma : \right. \\
        & \left.\forall 1 \leq i \leq \lceil  \frac{\delta n}{2} \rceil, |N_U(v_i') \cap \sigma(N_U(v_i))| > d \right) \\
        & \leq q^{- \frac{\delta n}{2} \frac{d}{2}} = \exp(- \beta \delta \frac{d}{4} n \log n)
    \end{split}
\end{equation}
We may then union bound over:

\begin{itemize}
    \item all possible choices of $(v_i, v_i')$ (there are at most $e^{ \delta n \log n}$ options);
    \item all possible choices of $\sigma|_{C \sqcup D}$ (there are $e^{o(n \log n)}$ options)
\end{itemize}
to obtain that 

\begin{equation}
\begin{split}
     \Proba & \left(\exists \sigma \in \Sym_n,  \#\left\{
            v\in (A\sqcup B)\cap \Der\sigma:
            \deg_{U\land_\sigma U}(v)\geq d
        \right\} \geq \delta n\right) \\
        &\leq e^{( 1 -\beta \frac{d}{4} + o(1)) \delta n \log n} = o(1),
        \end{split}
\end{equation}
concluding the proof.
\end{proof}
\medskip

We now return to the main proof. 

First, note that by (\ref{sovd-1}),
\begin{equation}
    SOV(U) \leq (d-1) < t
\end{equation}
since the balanced load of a vertex is bounded by its degree.

Next, we claim that
\begin{equation} \label{101}
    V_{\geq t}(I)=A \qquad \text{(up to } o_\Proba(n) \text{ vertices)}.
\end{equation}
Indeed, since $I=K_A$, all non-isolated vertices lie in $A \sqcup C$.
Furthermore, by Propositions \ref{balancedload1} and \ref{balancedload2}:
\begin{equation}
    \sum_{v \in A \sqcup C} w_I(v) = d|A| \qquad \text{and} \qquad \max_{v \in A \sqcup C}w_I(v) = \max_{X \subseteq A \sqcup C} \frac{|I[X]|}{|X|} \leq d,
\end{equation}

so that, if we pick $v \in A \sqcup C$ uniformly at random, $w_I(v) = d - o_\Proba(1)$. 
This shows (\ref{101}).

Finally, we are tasked with showing that 
\begin{equation}
    V_{dense} = B  \qquad \text{(up to } o_\Proba(n) \text{ vertices)}.
\end{equation}
Let $\tau\in\Sym_n$ be the permutation defined by
\begin{equation}
    \tau(v)= \begin{cases}
        \iota(v) & \text{if } v \in C \\
        \iota^{-1}(v) & \text{if } v \in D \\
        v & \text{otherwise.}
    \end{cases}
\end{equation}
Then
\begin{equation}
    I(\tau)=\tau(G)\cap H=K_B^D.
\end{equation}

Thus, by the same argument as for (\ref{101}),
\begin{equation} \label{taucool}
    V_{\geq t}(I(\tau))=B  \qquad \text{(up to } o_\Proba(n) \text{ vertices)}.
\end{equation}

Now, let $\sigma\in\Sym_n$ be arbitrary. Since $I(\sigma) \subseteq H$, any vertex has degree at most $d$ inside $I(\sigma)$. We then define

\begin{equation}
    P_\sigma = \{c \in C: \sigma(c) \in C \}; \qquad Q_\sigma = \{c \in C: \sigma(c) \in D \}.
\end{equation}
By the second part of the good incidence event, the number of vertices in $A$ having degree
$d$ in $I(\sigma)$ is at most
\begin{equation}
    \#\{x\in A:S_x\subseteq P_\sigma\}+o_\Proba(n) = n c_{A}(P_\sigma) +o_\Proba(n),
\end{equation}
and the number of vertices in $B$ having degree $d$ in $I(\sigma)$ is at
most
\begin{equation}
    \#\{y\in B:S_y\subseteq Q_\sigma\}+o_\Proba(n) = n c_{B}(Q_\sigma) +o_\Proba(n).
\end{equation}
Using the first part of the good incidence event, the total number of vertices with degree $d$ in $I(\sigma)$ is at most equal to $\frac{2}{3}n + o_\Proba(n)$, and we can only have equality if $|Q_{\sigma} \Delta C| = o_\Proba(n)$. In particular, 
\begin{equation}
    V_{\geq t}(I(\widehat \pi)) \subseteq B \qquad \text{(up to } o_\Proba(n) \text{ vertices).}
\end{equation}
and, by (\ref{taucool}), 

\begin{equation}
        V_{\geq t}(I(\widehat \pi)) = B \qquad \text{(up to } o_\Proba(n) \text{ vertices).}
\end{equation}
This concludes the proof.

\end{appendix}

\begin{acks}[Acknowledgments]
The author used large language model tools for brainstorming, drafting, generating figures, and proofreading. The author takes full responsibility for all mathematical content and any errors.

The author would like to thank Laurent Massoulié for his helpful comments on earlier versions of the paper.
\end{acks}

\end{document}